\definecolor{darkorange}{rgb}{1.0, 0.55, 0.0}
\definecolor{Royalblue}{rgb}{0.254,0.41,0.88}
\definecolor{royalblue}{rgb}{0.254,0.41,0.88}
\definecolor{darkorange}{rgb}{1.0, 0.55, 0.0}
\definecolor{do}{rgb}{1.0, 0.55, 0.0}
\definecolor{Royalblue}{rgb}{0.254,0.41,0.88}
\definecolor{darkorange}{rgb}{1.0, 0.55, 0.0}
\definecolor{royalblue}{rgb}{0.254,0.41,0.88}
\newcommand{\Tr}{\ensuremath{^{\mr{T}}}}
\newcommand{\mr}[1]{\ensuremath{\mathrm{#1}}}
\newcommand{\fnc}[1]{\ensuremath{\mathcal{#1}}}
\newcommand{\bfnc}[1]{\ensuremath{\bm{\mathcal{#1}}}}
\newcommand{\mat}[1]{\ensuremath{\mathsf{#1}}}
\newcommand{\etal}[0]{{\em et~al.\@}\xspace}
\newcommand{\eg}[0]{{e.g.\@}\xspace}
\newcommand{\ie}[0]{{i.e.\@}\xspace}
\newcommand{\etc}[0]{{etc.\@}\xspace}
\newcommand{\Theorem}[0]{Theorem}
\newcommand{\Eq}[0]{Eq.}
\newcommand{\Th}[0]{\ensuremath{^{\mathrm{th}}}}
\newtheorem{assume}{Assumption}
\newtheorem{thrm}{Theorem}
\DeclareMathOperator{\diag}{diag}
\newcommand{\pL}[0]{\ensuremath{p_{\mathrm{L}}}}
\newcommand{\qL}[0]{\ensuremath{\bm{q}_{\mathrm{L}}}}
\newcommand{\rmL}[0]{\mathrm{L}}
\newcommand{\xm}[1]{\ensuremath{x_{#1}}}
\newcommand{\xil}[1]{\ensuremath{\xi_{#1}}}
\newcommand{\xilhat}[1]{\ensuremath{\hat{\xi}_{#1}}}
\newcommand{\alphal}[1]{\ensuremath{\alpha_{#1}}}
\newcommand{\betal}[1]{\ensuremath{\beta_{#1}}}
\newcommand{\Nl}[1]{\ensuremath{N_{#1}}}
\newcommand{\bxil}[1]{\ensuremath{\bm{\xi}_{#1}}}
\newcommand{\bxili}[2]{\ensuremath{\bm{\xi}_{#1}^{(#2)}}}
\newcommand{\bmui}[1]{\ensuremath{\bm{u}^{(#1)}}}
\newcommand{\Q}[0]{\ensuremath{\bm{\fnc{Q}}}}
\newcommand{\Jk}[0]{\ensuremath{\fnc{J}_{\kappa}}}
\newcommand{\Jkhat}[0]{\ensuremath{\hat{\fnc{J}}_{\kappa}}}
\newcommand{\Qk}[0]{\ensuremath{\bm{\fnc{Q}_{\kappa}}}}
\newcommand{\Sk}[0]{\ensuremath{\fnc{S}_{\kappa}}}
\newcommand{\Jdxildxm}[2]{\ensuremath{\Jk\frac{\partial\xil{#1}}{\partial\xm{#2}}}}
\newcommand{\Jdxildxmhat}[2]{\ensuremath{\Jkhat\frac{\partial\xilhat{#1}}{\partial\xm{#2}}}}
\newcommand{\Fxm}[1]{\ensuremath{\bm{\fnc{F}}_{\xm{#1}}}}
\newcommand{\FxmI}[1]{\ensuremath{\bm{\fnc{F}}_{\xm{#1}}^{I}}}
\newcommand{\FxmV}[1]{\ensuremath{\bm{\fnc{F}}_{\xm{#1}}^{V}}}
\newcommand{\fxm}[1]{\ensuremath{\fnc{F}_{\xm{#1}}}}
\newcommand{\Um}[1]{\ensuremath{\fnc{U}_{#1}}}
\newcommand{\E}[0]{\ensuremath{\fnc{E}}}
\newcommand{\nxm}[1]{\ensuremath{n_{\xm{#1}}}}
\newcommand{\nxil}[1]{\ensuremath{n_{\xil{#1}}}}
\newcommand{\nxilhat}[1]{\ensuremath{n_{\xilhat{#1}}}}
\newcommand{\DxiloneD}[1]{\ensuremath{\mat{D}_{\xil{#1}}^{(1D)}}}
\newcommand{\PxiloneD}[1]{\ensuremath{\mat{P}_{\xil{#1}}^{(1D)}}}
\newcommand{\QxiloneD}[1]{\ensuremath{\mat{Q}_{\xil{#1}}^{(1D)}}}
\newcommand{\SxiloneD}[1]{\ensuremath{\mat{S}_{\xil{#1}}^{(1D)}}}
\newcommand{\ExiloneD}[1]{\ensuremath{\mat{E}_{\xil{#1}}^{(1D)}}}
\newcommand{\txilalpha}[1]{\ensuremath{\bm{t}_{\alphal}}}
\newcommand{\txilbeta}[1]{\ensuremath{\bm{t}_{\betal}}}
\newcommand{\DoneD}[0]{\mat{D}^{(1D)}}
\newcommand{\PoneD}[0]{\mat{P}^{(1D)}}
\newcommand{\QoneD}[0]{\mat{Q}^{(1D)}}
\newcommand{\Imat}[1]{\ensuremath{\mat{I}_{#1}}}
\newcommand{\M}[0]{\ensuremath{\mat{P}}}
\newcommand{\Dxm}[1]{\ensuremath{\mat{D}_{\xm{#1}}}}
\newcommand{\Exm}[1]{\ensuremath{\mat{E}_{\xm{#1}}}}
\newcommand{\Dxil}[1]{\ensuremath{\mat{D}_{\xil{#1}}}}
\newcommand{\Qxil}[1]{\ensuremath{\mat{Q}_{\xil{#1}}}}
\newcommand{\Exil}[1]{\ensuremath{\mat{E}_{\xil{#1}}}}
\newcommand{\Porthohatl}[1]{\ensuremath{\mat{P}_{\perp\xilhat{#1}}}}
\newcommand{\barDxilhat}[1]{\ensuremath{\overline{\mat{D}}_{\xilhat{#1}}}}
\newcommand{\matFxm}[3]{\ensuremath{\mat{F}_{\xm{#1}}\left(#2,#3\right)}}
\newcommand{\qk}[1]{\ensuremath{\bm{q}_{#1}}}
\newcommand{\qki}[2]{\ensuremath{\bm{q}_{#1}^{#2}}}
\newcommand{\tildeqi}[1]{\ensuremath{\tilde{\bm{q}}^{#1}}}
\newcommand{\wki}[2]{\ensuremath{\bm{w}_{#1}^{#2}}}
\newcommand{\tildewi}[1]{\ensuremath{\tilde{\bm{w}}^{#1}}}
\newcommand{\fxmsc}[3]{\ensuremath{\bm{f}_{\xm{#1}}^{sc}\left(#2,#3\right)}}
\newcommand{\ones}[1]{\ensuremath{\bm{1}_{#1}}}
\newcommand{\barones}[1]{\ensuremath{\overline{\bm{1}}_{#1}}}
\newcommand{\wk}[1]{\ensuremath{\bm{w}_{#1}}}
\newcommand{\psixmk}[2]{\ensuremath{\bm{\psi}_{\xm{#1}}^{#2}}}
\newcommand{\psixmki}[3]{\ensuremath{\left(\bm{\psi}_{\xm{#1}}^{#2}\right)^{(#3)}}}
\newcommand{\matJk}[1]{\ensuremath{{\color{orange}\mat{J}}_{#1}}}
\newcommand{\matAlmk}[3]{\ensuremath{\left[{\color{blue}\fnc{J}\frac{\partial\xil{#1}}{\partial\xm{#2}}}\right]_{#3}}}
\newcommand{\RL}[0]{\ensuremath{\mat{R}_{\rmL}}}
\newcommand{\Ok}[0]{\ensuremath{\Omega_{\kappa}}}
\newcommand{\pOk}[0]{\ensuremath{\partial\Omega_{\kappa}}}
\newcommand{\Ohat}[0]{\ensuremath{\hat{\Omega}}}
\newcommand{\Ohatk}[0]{\ensuremath{\hat{\Omega}_{\kappa}}}
\newcommand{\Ghatk}[0]{\ensuremath{\hat{\Gamma}_{\kappa}}}
\newcommand{\Ghat}[0]{\ensuremath{\hat{\Gamma}}}
\newcommand{\pOhatk}[0]{\ensuremath{\partial\hat{\Omega}_{\kappa}}}
\newcommand{\dissL}[0]{\ensuremath{\bm{diss}_{\mathrm{L}}}}
\newcommand{\dissRf}[1]{\ensuremath{\bm{diss}_{\Rf{#1}}}}
\newcommand{\uk}[0]{\ensuremath{\bm{u}_{\kappa}}}
\newcommand{\ur}[0]{\ensuremath{\bm{u}_{r}}}
\newcommand{\uki}[1]{\ensuremath{\bm{u}_{\kappa}^{#1}}}
\newcommand{\uri}[1]{\ensuremath{\bm{u}_{r}^{#1}}}
\newcommand{\uL}[0]{\ensuremath{\bm{u}_{\rmL}}}
\newcommand{\uRf}[1]{\ensuremath{\bm{u}_{\Rf{#1}}}}
\newcommand{\Cij}[2]{\ensuremath{\mat{C}_{#1,#2}}}
\newcommand{\Chatij}[2]{\ensuremath{\hat{\mat{C}}_{#1,#2}}}
\newcommand{\Jtilde}[0]{\ensuremath{{\color{orange}\hat{\mat{J}}}}}
\newcommand{\utilde}[0]{\ensuremath{\widetilde{\bm{u}}}}
\newcommand{\qtilde}[0]{\ensuremath{\widetilde{\bm{q}}}}
\newcommand{\wtilde}[0]{\ensuremath{\widetilde{\bm{w}}}}
\newcommand{\psitildem}[1]{\ensuremath{\widetilde{\bm{\psi}_{\xm{#1}}}}}
\newcommand{\Mtilde}[0]{\ensuremath{\widetilde{\mat{P}}}}
\newcommand{\Dtildel}[1]{\ensuremath{\widetilde{\mat{D}}_{\xilhat{#1}}}}
\newcommand{\Stildel}[1]{\ensuremath{\widetilde{\mat{S}}_{\xilhat{#1}}}}
\newcommand{\Etildel}[1]{\ensuremath{\widetilde{\mat{E}}_{\xilhat{#1}}}}
\newcommand{\tildeone}[0]{\ensuremath{\tilde{\bm{1}}}}
\newcommand{\eonel}[1]{\ensuremath{\bm{e}_{1_{#1}}}}
\newcommand{\eNl}[1]{\ensuremath{\bm{e}_{\Nl{#1}}}}
\newcommand{\bmxi}[1]{\ensuremath{\bm{\xi}^{(#1)}}}
\newcommand{\U}[0]{\ensuremath{\fnc{U}}}
\newcommand{\OhatL}[0]{\ensuremath{\hat{\Omega}_{\rmL}}}
\newcommand{\GammahatL}[0]{\ensuremath{\hat{\Gamma}_{\rmL}}}
\newcommand{\Gammahat}[0]{\ensuremath{\hat{\Gamma}}}
\newcommand{\Dtildem}[1]{\tilde{\mat{D}}_{#1}}
\newcommand{\Qtildem}[1]{\tilde{\mat{Q}}_{#1}}
\newcommand{\Etildem}[1]{\tilde{\mat{E}}_{#1}}
\newcommand{\barQtildem}[1]{\overline{\tilde{\mat{Q}}}_{#1}}
\newcommand{\barEtildem}[1]{\overline{\tilde{\mat{E}}}_{#1}}
\newcommand{\am}[1]{\ensuremath{a_{#1}}}
\newcommand{\Bm}[1]{\ensuremath{b_{#1}}}
\newcommand{\Chatla}[2]{\ensuremath{\hat{\mat{C}}_{#1,#2}}}
\newcommand{\matChatla}[2]{\ensuremath{\left[{\color{blue}\Chatla{#1}{#2}}\right]}}
\newcommand{\matCtildela}[2]{\ensuremath{\left[{\color{blue}\Chatla{#1}{#2}}\right]}}
\newcommand{\Deltalk}[2]{\ensuremath{\Delta_{#1}^{#2}}}
\newcommand{\xilHkhat}[2]{\ensuremath{\hat{\mathrm{H}}_{#2}^{#1}}}
\newcommand{\xilLkhat}[2]{\ensuremath{\hat{\mathrm{L}}_{#2}^{#1}}}
\newcommand{\matJkhat}[1]{\ensuremath{{\color{orange}\mat{\hat{J}}}_{#1}}}
\newcommand{\matAlmkhat}[3]{\ensuremath{\left[{\color{blue}\Jdxildxmhat{#1}{#2}}\right]_{#3}}}
\newcommand{\matAlmkhatgamma}[5]{\ensuremath{\left[{\color{#5}\Jdxildxmhat{#1}{#2}}\right]_{#3}^{#4}}}
\newcommand{\pRf}[1]{\ensuremath{p_{\Rf{#1}}}}
\newcommand{\Rf}[1]{\ensuremath{\mathrm{R}_{#1}}}
\newcommand{\qRf}[1]{\ensuremath{\bm{q}_{\Rf{#1}}}}
\newcommand{\IRftoLoneD}[2]{\ensuremath{\mat{I}_{\Rf{#1}\rm{to}\rmL}^{#2}}}
\newcommand{\ILtoRfoneD}[2]{\ensuremath{\mat{I}_{\rmL\rm{to}\Rf{#1}}^{#2}}}
\newcommand{\IRftoL}[1]{\ensuremath{\mat{I}_{\Rf{#1}\rm{to}\rmL}}}
\newcommand{\ILtoRf}[1]{\ensuremath{\mat{I}_{\rmL\rm{to}\Rf{#1}}}}
\newcommand{\DtildehatH}[1]{\ensuremath{\widetilde{\mat{D}}_{\xilhat{#1}}}}
\newcommand{\Dtildehatl}[1]{\ensuremath{\widetilde{\mat{D}}_{\xilhat{#1}}}}
\newcommand{\Qtildehatl}[1]{\ensuremath{\widetilde{\mat{Q}}_{\xilhat{#1}}}}
\newcommand{\Stildehatl}[1]{\ensuremath{\widetilde{\mat{S}}_{\xilhat{#1}}}}
\newcommand{\Etildehatl}[1]{\ensuremath{\widetilde{\mat{E}}_{\xilhat{#1}}}}
\newcommand{\Dxilhat}[1]{\ensuremath{\mat{D}_{\xilhat{#1}}}}
\newcommand{\Qxilhat}[1]{\ensuremath{\mat{Q}_{\xilhat{#1}}}}
\newcommand{\Sxilhat}[1]{\ensuremath{\mat{S}_{\xilhat{#1}}}}
\newcommand{\Exilhat}[1]{\ensuremath{\mat{E}_{\xilhat{#1}}}}
\newcommand{\amk}[2]{\ensuremath{\bm{a}_{#1}^{#2}}}
\newcommand{\cmk}[2]{\ensuremath{\bm{c}_{#1}^{#2}}}
\newcommand{\Mkopt}[1]{\ensuremath{\mat{M}^{#1}}}
\newcommand{\Uk}[1]{\ensuremath{\mat{U}^{#1}}}
\newcommand{\Vk}[1]{\ensuremath{\mat{V}^{#1}}}
\newcommand{\Sigmak}[1]{\ensuremath{\mat{\Sigma}^{#1}}}
\newcommand{\wRf}[1]{\ensuremath{\bm{w}_{\Rf{#1}}}}
\newcommand{\wL}[0]{\ensuremath{\bm{w}_{\mathrm{L}}}}
\newcommand{\RRf}[1]{\ensuremath{\mat{R}}_{\Rf{#1}}}
\newcommand{\IPL}[0]{\ensuremath{\bm{I}_{\mathrm{P}}^{\mathrm{L}}}}
\newcommand{\IPRf}[1]{\ensuremath{\bm{I}_{\mathrm{P}}^{\Rf{#1}}}}
\newcommand{\matJRftilde}[1]{\ensuremath{\tilde{\mat{J}}_{\Rf{f}}}}
\newcommand{\CtildeRfij}[3]{\ensuremath{\tilde{\mat{C}}_{#1,#2}^{\Rf{#3}}}}
\newcommand{\Dtildelm}[2]{\ensuremath{\widetilde{\mat{D}}_{#1,#2}}}
\begin{document}

\title{Entropy Stable $h/p$-Nonconforming Discretization with the Summation-by-Parts Property for the Compressible Euler and Navier--Stokes Equations
}


\author{David C.\ Del Rey Fern\'andez  \and
        Mark H.\ Carpenter \and 
        Lisandro Dalcin \and 
        Stefano Zampini \and 
        Matteo Parsani
}


\institute{David C. Del Rey Fern\'andez \at
              National Institute of Aerospace and Computational AeroSciences Branch, NASA Langley Re-
search Center, Hampton, VA, United States \\
              \email{dcdelrey@gmail.com}           
           \and
           Mark H.\ Carpenter  \at
          NASA Langley Research Center, Hampton, VA, United States
          \email{mark.h.carpenter@nasa.gov} \\
          \and 
        Lisandro Dalcin, 
        Stefano Zampini, 
        Matteo Parsani \at 
        King Abdullah University of Science and Technology (KAUST), Computer Electrical and
Mathematical Science and Engineering Division (CEMSE), Extreme Computing Research Center (ECRC), Thuwal, Saudi Arabia 
\email{dalcinl@gmail.com, stefano.zampini@kaust.edu.sa, matteo.parsani@kaust.edu.sa}
}

\date{Received: date / Accepted: date}

\maketitle

\begin{abstract}
In this paper, the entropy conservative/stable algorithms presented by Del Rey Fern\'andez and coauthors \cite{Fernandez2019_p_euler,Fernandez2018_TM,Fernandez2019_p_ns} 
for the compressible Euler and Navier--Stokes equations on nonconforming $p$-refined/coarsened curvilinear grids 
is extended to $h/p$ refinement/coarsening. The main difficulty in developing nonconforming algorithms is 
the construction of appropriate coupling procedures across nonconforming interfaces. Here,
a computationally simple and efficient approach based upon using decoupled interpolation operators is utilized. 
The resulting scheme is entropy conservative/stable and element-wise 
conservative. 
Numerical simulations of the isentropic vortex and viscous shock propagation
confirm the entropy conservation/stability and accuracy properties of the method
  (achieving $\sim p+1$ convergence) which are  comparable to those of the original conforming scheme 
\cite{Carpenter2014,Parsani2016}. 
Simulations of the Taylor--Green
vortex at $Re=1,600$ and turbulent flow past a sphere at $Re=2,000$ show the robustness
and stability properties of the overall spatial discretization for unstructured grids.
Finally, to demonstrate the entropy conservation property of a 
fully-discrete explicit entropy stable algorithm with $h/p$ refinement/coarsening,
we present the time evolution of the entropy function obtained by simulating
the propagation of the isentropic vortex using a relaxation Runge--Kutta scheme.

\keywords{Nonconforming interfaces 
    \and  $h/p$ adaptation 
     \and Nonlinear entropy stability 
     \and Summation-by-parts 
     \and Simultaneous-approximation-terms 
     \and High-order accurate discretizations 
     \and Curved elements 
     \and Unstructured grid
}
\end{abstract}

\section{Introduction}
\label{intro}
This paper is the final installment in a set aimed at developing arbitrarily high-order, entropy stable, 
$h/p$-nonconforming schemes on curvilinear coordinates for the compressible Euler and 
Navier--Stokes equations~\cite{Friedrich2018,Fernandez2019_p_euler,Fernandez2018_TM,Fernandez2019_p_ns}. The efficient use of exascale concurrency on next generation hardware 
motivates the search for algorithms that are accurate and robust. Moreover, essential 
to efficiency is the ability to optimally use degrees of freedom through  $h$-, $p$-, and $r$-refinement/coarsening 
and communication hiding through dense compute kernels. High order methods are natural candidates for next generation hardware 
because they are accurate and their ratio of communications to local computations is usually
small. However, they have historically been limited 
by robustness issues, which is even more important as problem size and physics complexity increases.

When numerically solving partial differential equations (PDEs) it is 
imperative to find a bound on the growth rate of the 
solution, otherwise the possibility exists that the solution could grow arbitrarily fast. 
This upper bound can be established by ensuring that a numerical method is stable. 
For linear variable coefficient problems in arbitrary dimensions, a general 
and systematic approach to ensure stability is the energy method because it can be applied to the 
continuous as well as the semi-discrete model. The energy method becomes extremely powerful
when it is used in combination with the summation-by-parts (SBP) framework~\cite{Fernandez2014,Svard2014}
since it allows for the construction of provably stable schemes of any order.
SBP operators can be viewed as strong or weak form differentiation matrices that mimic integration-by-parts (IBP) and are endowed with a telescoping property, critical for provable stability. 
Via this property, SBP schemes (augmented with appropriate interface coupling procedures, 
\eg, simultaneous approximation terms (SATs)~\cite{Carpenter1994,Carpenter1999,Nordstrom1999,Nordstrom2001b,Carpenter2010,svard_entropy_stable_euler_wall_2014,Parsani2015b,Parsani2015}) reproduce, in a one-to-one manner, continuous stability proofs. 
Therefore they provide a road map for the development of provably stable 
semi-discrete or fully discrete algorithms (see, for instance, \cite{ranocha2019relaxation,Friedrich2019}).

For nonlinear problems a general and systematic approach for establishing stability has yet to be found. 
Nevertheless, for a certain class of PDEs 
progress has been made. For conservation laws, Tadmor~\cite{Tadmor1987entropy} constructed entropy conservative/stable low-order finite volume schemes 
that achieve entropy conservation by using two-point flux functions. When contracted with entropy variables, 
these schemes telescope the entropy flux. Entropy stability is then achieved by adding appropriate dissipation.
For a review of these ideas see, for instance, Tadmor~\cite{Tadmor2003}.

Tadmor's approach was extended to finite domains and arbitrary high order finite difference WENO schemes in the work of 
Fisher and coauthors who combined the SBP framework with Tadmor's two-point flux functions, 
resulting in entropy stable semi-discrete schemes~\cite{Fisher2012phd,Fisher2013,Fisher2013b}. 
This approach inherits all of the mechanics of linear SBP schemes for the imposition of boundary conditions and 
inter-element coupling and gives a systematic methodology for discretizing problems on complex 
domains; see, for instance, \cite{Carpenter2014,Parsani2015b,Parsani2015,Winters2015,Parsani2016,Gassner2016b,Yamaleev2017,Winters2017,Derigs2017,Wintermeyer2017,Crean2018,Chan2018b,Friedrich2019,Fernandez2019_staggered} and the reference
therein. An alternative method applicable to the compressible Euler equations~\cite{olsson1994,Yee2000,Sandham2002,Bjorn2018},
uses specially chosen entropy functions that result in a homogeneity property on the compressible Euler fluxes. Via this property, 
the Euler fluxes are split such that when contracted with the entropy variables, stability estimates 
result that are analogous in form to energy estimates obtained for linear PDEs. 

The objective of this paper is the extension of entropy stable $p$-nonconforming algorithm presented in 
Del Rey Fern\'andez~\etal~\cite{Fernandez2019_p_euler,Fernandez2018_TM,Fernandez2019_p_ns} for the compressible Euler and Navier--Stokes 
equations in curvilinear coordinates to arbitrary $h/p$-refinement/coarsening.  
The novel contributions of this paper are summarized as follows:
\begin{itemize}
\item A general and simple entropy conservative/stable nonconforming algorithm is proposed in curvilinear coordinates for 
the compressible Euler and Navier--Stokes equations that
\begin{itemize}
\item Enables a simple extension of the algorithm in~\cite{Fernandez2019_p_euler,Fernandez2018_TM,Fernandez2019_p_ns} that 
uses the same type of interface SAT and therefore allows code re-utilization
\item Results in the solution of the discrete geometric conservation laws (GCL) that is local to each element
\item Applies the metric approximation approach of Crean~\etal\cite{Crean2018} to arbitrary $h/p$-nonconforming elements
\item Ensures free-stream preservation by satisfying the discrete GCL conditions 
\item Is element-wise conservative
\end{itemize}
\item Numerical evidence is provided to demonstrate that the scheme retains the stability and accuracy 
properties of the conforming base scheme \cite{Carpenter2014,Parsani2016}
\end{itemize}

The paper is organized as follows. The notation is summarized in Section~\ref{sec:notation}. 
In Section~\ref{sec:hplin} the $h/p$ nonconforming algorithm is detailed in the context of the 
linear convection-diffusion equation. The required nonlinear mechanic necessary to 
extend the nonconforming algorithm to the compressible Navier--Stokes equations is 
described in the simple context of the Burgers' equation in Section~\ref{sec:Burgers}. 
Section~\ref{sec:NS} details the extension of the nonconforming algorithm to the  
compressible Navier--Stokes equations. The addition of interface dissipation that 
retains the provable properties of the base algorithm is discussed in Section~\ref{sec:dissipation}. 
Numerical experiments are detailed in Section~\ref{sec:num} while conclusions are drawn 
in Section~\ref{sec:conclusions}.
\section{Notation and definitions}\label{sec:notation}
The notation used herein is identical to that in~\cite{Fernandez2019_p_euler,Fernandez2018_TM,Fernandez2019_p_ns}; readers familiar with the notation 
can skip to Section~\ref{sec:hplin}. PDEs are discretized on cubes having Cartesian computational coordinates denoted by 
the triple $(\xil{1},\xil{2},\xil{3})$, where the physical coordinates are denoted by the triple 
$(\xm{1},\xm{2},\xm{3})$. Vectors are represented by lowercase bold font, for example $\bm{u}$, 
while matrices are represented using sans-serif font, for example, $\mat{B}$. Continuous 
functions on a space-time domain are denoted by capital letters in script font.  For example, 
\begin{equation*}
\fnc{U}\left(\xil{1},\xil{2},\xil{3},t\right)\in L^{2}\left(\left[\alphal{1},\betal{1}\right]\times
\left[\alphal{2},\betal{2}\right]\times\left[\alphal{3},\betal{3}\right]\times\left[0,T\right]\right)
\end{equation*}
represents a square integrable function, where $t$ is the temporal coordinate. The restriction of such 
functions onto a set of mesh nodes is denoted by lower case bold font. For example, the restriction of 
$\fnc{U}$ onto a grid of $\Nl{1}\times\Nl{2}\times\Nl{3}$ nodes is given by the vector
\begin{equation*}
\bm{u} = \left[\fnc{U}\left(\bxili{}{1},t\right),\dots,\fnc{U}\left(\bxili{}{N},t\right)\right]\Tr,
\end{equation*}
where, $N$ is the total number of nodes ($N\equiv\Nl{1}\Nl{2}\Nl{3}$) square brackets ($[]$) are used 
to delineate vectors and matrices as well as ranges for variables (the context will make clear which meaning is being used). Moreover, $\bm{\xi}$ is a vector of vectors 
constructed from the three vectors $\bxil{1}$, $\bxil{2}$, and $\bxil{3}$, which are 
vectors of size $\Nl{1}$, $\Nl{2}$, and $\Nl{3}$ and contain the coordinates of the mesh in 
the three computational directions, respectively. Finally, $\bxil{}$ is constructed as 
\begin{equation*}
\bxil{}(3(i-1)+1:3i)\equiv  \bxili{}{i}
\equiv\left[\bxil{1}(i),\bxil{2}(i),\bxil{3}(i)\right]\Tr,
\end{equation*}
where the notation $\bm{u}(i)$ means the $i\Th$ entry of the vector $\bm{u}$ and $\bm{u}(i:j)$ is the subvector 
constructed from $\bm{u}$ using the $i\Th$ through $j\Th$ entries (\ie, Matlab notation is used).

 Oftentimes, monomials are discussed and the following notation is used:
\begin{equation*}
\bxil{l}^{j} \equiv \left[\left(\bxil{l}(1)\right)^{j},\dots,\left(\bxil{l}(\Nl{l})\right)^{j}\right]\Tr,
\end{equation*}
and the 
convention that $\bxil{l}^{j}=\bm{0}$ for $j<0$ is used.

Herein, one-dimensional SBP operators are used to discretize derivatives. 
The definition of a one-dimensional SBP operator in the $\xil{l}$ direction, $l=1,2,3$, 
is~\cite{DCDRF2014,Fernandez2014,Svard2014}
\begin{definition}\label{SBP}
\textbf{Summation-by-parts operator for the first derivative}: A matrix operator with constant coefficients, 
$\DxiloneD{l}\in\mathbb{R}^{\Nl{l}\times\Nl{l}}$, is an SBP operator of degree $p$ approximating the derivative 
$\frac{\partial}{\partial \xil{l}}$ on the domain $\xil{l}\in\left[\alphal{l},\betal{l}\right]$ with nodal 
distribution $\bxil{l}$ having $\Nl{l}$ nodes, if 
\begin{enumerate}
\item $\DxiloneD{l}\bxil{l}^{j}=j\bxil{l}^{j-1}$, $j=0,1,\dots,p$;
\item $\DxiloneD{l}\equiv\left(\PxiloneD{l}\right)^{-1}\QxiloneD{l}$, where the norm matrix, 
$\PxiloneD{l}$, is symmetric positive definite;
\item $\QxiloneD{l}\equiv\left(\SxiloneD{l}+\frac{1}{2}\ExiloneD{l}\right)$, 
$\SxiloneD{l}=-\left(\SxiloneD{l}\right)\Tr$, $\ExiloneD{l}=\left(\ExiloneD{l}\right)\Tr$,  
$\ExiloneD{l} = \diag\left(-1,0,\dots,0,1\right)=\eNl{l}\eNl{l}\Tr-\eonel{l}\eonel{l}\Tr$, 
$\eonel{l}\equiv\left[1,0,\dots,0\right]\Tr$, and $\eNl{l}\equiv\left[0,0,\dots,1\right]\Tr$. 
\end{enumerate}
Thus, a degree $p$ SBP operator is 
one that differentiates exactly monomials up to degree $p$.
\end{definition}

In this work, one-dimensional SBP operators are extended to multiple dimensions 
using tensor products ($\otimes$).  The tensor product between the matrices $\mat{A}$ and $\mat{B}$ 
is given as $\mat{A}\otimes\mat{B}$. When referencing individual entries in a matrix the notation $\mat{A}(i,j)$ 
is used, which means 
the $i\Th$ $j\Th$ entry in the matrix $\mat{A}$.

The focus in this paper is exclusively on diagonal-norm SBP operators. Moreover, the same 
one-dimensional SBP operator are used in each direction, each operating on $N$ nodes. 
Specifically, diagonal-norm SBP operators constructed on the Legendre--Gauss--Lobatto (LGL) 
nodes are used, \ie, a discontinuous Galerkin collocated spectral element approach is utilized.

The physical domain $\Omega\subset\mathbb{R}^{3}$, 
with boundary $\Gamma\equiv\partial\Omega$ is partitioned into $K$ non-overlapping 
hexahedral elements. The domain of the $\kappa^{\text{th}}$ element is denoted by $\Ok$ and has 
boundary $\pOk$. Numerically, PDEs are solved in computational 
coordinates, 
where each $\Ok$ is locally transformed to $\Ohatk$, with boundary $\Ghat\equiv\pOhatk$, under the 
following assumption:
\begin{assume}\label{assume:curv}
Each element in physical space is transformed using 
a local and invertible curvilinear coordinate transformation that is compatible at 
shared interfaces, meaning that points in computational space on either side of a 
shared interface  mapped to the same physical location and therefore map back 
to the analogous location in computational space; this is the standard assumption 
that the curvilinear coordinate transformation is water tight.
\end{assume}
\section{An $h/p$-nonconforming algorithm: Linear convection-diffusion equation}\label{sec:hplin}
In this paper, the focus is on curvilinearly mapped elements with interfaces that 1) are conforming 
but have nonconforming nodal distributions, such as would arise in $p$-refinement, 2) 
elements that have nonconforming faces, such as would arise in $h$-refinement, and 3) arbitrary 
combinations of 1) and 2). The development of entropy stable $h/p$-refinement algorithm for the 
compressible Euler equations on Cartesian grids is detailed in~\cite{Friedrich2018}. The 
extension to curvilinear coordinates and $p$-refinement for the compressible Euler and Navier--Stokes equations is detailed in 
the series of papers~\cite{Fernandez2019_p_euler,Fernandez2019_p_ns,Fernandez2018_TM}, where an interface coupling technique is 
introduced that maintains, accuracy, discrete entropy conservation/stability, element-wise conservation and requires only local 
solves to approximate metric terms. Herein, the algorithm in~\cite{Fernandez2019_p_euler,Fernandez2019_p_ns,Fernandez2018_TM} 
is extended to allow for arbitrary $h/p$-refinement on unstructured grids for the compressible Euler and 
Navier--Stokes equations.
\subsection{Continuous and semi-discrete analysis}

A number of key technical difficulties that arise in developing a stable and conservative nonconforming discretization 
for the compressible Navier--Stokes equations are already present in the more simple context of the 
linear convection-diffusion equation. As a result, the proposed interface coupling procedure for both the inviscid and 
viscous terms are first presented for this simple linear scalar equation. In Cartesian coordinates, the 
linear convection-diffusion equation reads  
\begin{equation}\label{eq:cartconvectiondiffusion}
  \frac{\partial\U}{\partial t}+\sum\limits_{m=1}^{3}\frac{\partial\left(\am{m}\U\right)}{\partial\xm{m}}=
  \sum\limits_{m=1}^{3}\frac{\partial^{2}(\Bm{m}\U)}{\partial\xm{m}^{2}},
\end{equation}
where $(\am{m}\U)$ and $\frac{\partial(\Bm{m}\U)}{\partial\xm{m}}$ are the inviscid and viscous fluxes,
respectively. The symbols $\am{m}$ correspond to the constant components of the convection speed
whereas $\Bm{m}$ are the positive and constant diffusion coefficients. 
The stability of~\eqref{eq:cartconvectiondiffusion} can be determined via the energy method, which proceeds by 
multiplying~\eqref{eq:cartconvectiondiffusion} by the solution, ($\U$), 
and after using the product rule yields
\begin{equation}\label{eq:cartconvectionenergydiffusion1}
  \frac{1}{2}\frac{\partial\U^{2}}{\partial t}+
  \frac{1}{2}\sum\limits_{m=1}^{3}\frac{\partial\left(a_{m}\U^{2}\right)}{\partial\xm{m}}=
  \sum\limits_{m=1}^{3}\left\{\frac{\partial}{\partial\xm{m}}\left(\U\frac{\partial(\Bm{m}\U)}{\partial\xm{m}}\right)
  -\left(\frac{\partial(\Bm{m}\U)}{\partial\xm{m}}\right)^{2}\right\}.
\end{equation}
Integrating over the domain, $\Omega$, using integration by parts, and the Leibniz rule gives
\begin{equation}\label{eq:cartconvectionenergydiffusion2}
    \begin{split}
  &\frac{\mr{d}}{\mr{d}t}\int_{\Omega}\frac{\U^{2}}{2}\mr{d}\Omega=\\
  &\frac{1}{2}\sum\limits_{m=1}^{3}\left(\oint_{\Gamma}\left\{
    -\left(\am{m}\U^{2}\right)
    +2\U\frac{(\Bm{m}\U)}{\partial\xm{m}}
  \right\}\nxm{m}
    \mr{d}\Gamma-2\int_{\Omega}\left(\frac{\partial(\Bm{m}\U)}{\partial\xm{m}}\right)^{2}\mr{d}\Omega\right),
\end{split}
\end{equation}
where $\nxm{m}$ is the $m\Th$ component of the outward facing unit normal. What \Eq~\eqref{eq:cartconvectionenergydiffusion2} 
demonstrates is that the time rate of change of the norm of the solution, 
$\|\U\|^{2}\equiv\int_{\Omega}\U^{2}\mr{d}\Omega$, depends on surface flux integrals and a viscous dissipation term. 
This implies that, in combination with appropriate boundary conditions, 
\Eq~\eqref{eq:cartconvectionenergydiffusion2} results in a bound on the solution 
in terms of the data of the problem, and therefore a proof of stability. 
The SBP framework used
in this paper  
is mimetic of the above energy stability
analysis in a one-to-one fashion and results in similar stability statements for the semi-discrete equations.  

Derivatives are approximated using differentiation matrices that 
are defined in computational space. To do so, \Eq~\eqref{eq:cartconvectiondiffusion} is transformed using the 
curvilinear coordinate transformation $\xm{m}=\xm{m}\left(\xil{1},\xil{2},\xil{3}\right)$. Thus, on the 
$\kappa\Th$ element, the $\xm{m}$ derivatives are expanded using the chain rule as
\begin{equation*}
  \frac{\partial}{\partial\xm{m}}=
  \sum\limits_{l=1}^{3}\frac{\partial\xil{l}}{\partial\xm{m}}\frac{\partial}{\partial\xil{l}},\quad
  \frac{\partial^{2}}{\partial\xm{m}^{2}}=
  \sum\limits_{l,a=1}^{3}\frac{\partial\xil{l}}{\partial\xm{m}}
  \frac{\partial}{\partial\xil{l}}\left(
  \frac{\partial\xil{a}}{\partial \xm{m}}\frac{\partial}{\partial\xil{a}}  
  \right).
\end{equation*} 
Multiplying by the metric Jacobian, ($\Jk$), \Eq~\eqref{eq:cartconvectiondiffusion} becomes 
 \begin{equation}\label{eq:convectiondiffusionchain}
  \Jk\frac{\partial\U}{\partial t}
 +\sum\limits_{l,m=1}^{3}\Jk\frac{\partial\xil{l}}{\partial\xm{m}}
  \frac{\partial \left(a_{m}\U\right)}{\partial\xil{l}}=\sum\limits_{l,a,m=1}^{3}
  \Jk\frac{\partial\xil{l}}{\partial\xm{m}}\frac{\partial}{\partial\xil{l}}
  \left(\frac{\partial\xil{a}}{\partial \xm{m}}
  \frac{\partial(\Bm{m}\U)}{\partial\xil{a}}
  \right).
\end{equation}

Herein, Eq.~\eqref{eq:convectiondiffusionchain} is referenced as the chain rule form of Eq.~\eqref{eq:cartconvectiondiffusion}. 
Bringing the metric terms, $\Jdxildxm{l}{m}$, inside the derivative and using the product rule gives
 \begin{equation}\label{eq:convectiondiffusionstrong1}
  \begin{split}
  \Jk\frac{\partial\U}{\partial t}+\sum\limits_{l,m=1}^{3}
  \frac{\partial}{\partial\xil{l}}\left(\Jdxildxm{l}{m}a_{m}\U\right)
-&\sum\limits_{l,m=1}^{3}a_{m}\U\frac{\partial}{\partial\xil{l}}\left(\Jdxildxm{l}{m}\right)
 =\\
 \sum\limits_{l,a,m=1}^{3}
  \frac{\partial}{\partial\xil{l}}
  \left(\Jk\frac{\partial\xil{l}}{\partial\xm{m}}\frac{\partial\xil{a}}{\partial \xm{m}}
  \frac{\partial(\Bm{m}\U)}{\partial\xil{a}}\right)
-&\sum\limits_{l,a,m=1}^{3}
\frac{\partial\xil{a}}{\partial \xm{m}}
  \frac{\partial(\Bm{m}\U)}{\partial\xil{a}}
  \frac{\partial}{\partial\xil{l}}\left( \Jk\frac{\partial\xil{l}}{\partial\xm{m}}\right).
  \end{split}
\end{equation}
The last terms on the left- and right-hand sides of~\eqref{eq:convectiondiffusionstrong1} are zero via the 
GCL relations
\begin{equation}\label{eq:GCL}
    \sum\limits_{l=1}^{3}\frac{\partial}{\partial\xil{l}}\left(\Jdxildxm{l}{m}\right)=0,\quad m=1,2,3,
\end{equation}
leading to the strong conservation form of the convection-diffusion equation in curvilinear coordinates:
 \begin{equation}\label{eq:convectiondiffusionstrong}
  \Jk\frac{\partial\U}{\partial t}+\sum\limits_{l,m=1}^{3}
  \frac{\partial}{\partial\xil{l}}\left(\Jdxildxm{l}{m}a_{m}\U\right)= \sum\limits_{l,a,m=1}^{3}
  \frac{\partial}{\partial\xil{l}}
  \left(\Jk\frac{\partial\xil{l}}{\partial\xm{m}}\frac{\partial\xil{a}}{\partial \xm{m}}
  \frac{\partial(\Bm{m}\U)}{\partial\xil{a}}\right).
\end{equation}

The $h$-refinement procedure proceeds by subdividing the computational domain of parent elements; where children elements 
inherit the curvilinear coordinate transformation of the parent element. It is therefore convenient to introduce 
two computational coordinates:
\begin{itemize}
\item $\xil{l}$, which is the mapping from the child element to physical space, \ie, the computational coordinate system of the $\kappa\Th$ element,
\item $\xilhat{l}$, which is the mapping from the parent element to the physical space. 
\end{itemize}
The mapping from children to parent elements is rectilinear. Thus, 
assuming that the child element has a computational domain of $[-1,1]^{3}$, this transformation 
for the $\kappa\Th$ element is given by
\begin{equation}\label{eq:lintransformation}
\xil{l}=\frac{2}{\Deltalk{l}{\kappa}}\xilhat{l}-\frac{\left(\xilHkhat{l}{\kappa}+\xilLkhat{l}{\kappa}\right)}{\Deltalk{l}{\kappa}},\quad \Deltalk{l}{\kappa}\equiv\xilHkhat{l}{\kappa}-\xilLkhat{l}{\kappa}, \quad l=1,2,3,
\end{equation}
where $\xilHkhat{l}{\kappa}$ and $\xilLkhat{l}{\kappa}$ are the largest and smallest extent of the $\xil{l}$ coordinate in the coordinate system of the parent element ($\xilhat{l}$). Using Eq.~\eqref{eq:lintransformation} 
the Jacobian and metrics are recast in terms of the Jacobian, $\Jkhat$, and metrics terms, $\Jdxildxmhat{l}{m}$, 
of the parent element. This step results in 
\begin{equation}\label{eq:newmetrics}
\frac{\partial\xil{l}}{\partial \xm{m}}=\frac{2}{\Deltalk{l}{\kappa}}\frac{\partial\xilhat{l}}{\partial\xm{m}},\qquad
\Jk=\frac{\Deltalk{1}{\kappa}\Deltalk{2}{\kappa}\Deltalk{3}{\kappa}}{8}\Jkhat,\qquad
\Jdxildxm{l}{m}=\frac{\Deltalk{1}{\kappa}\Deltalk{2}{\kappa}\Deltalk{3}{\kappa}}{4\Deltalk{l}{\kappa}}\Jdxildxmhat{l}{m}.
\end{equation}

Inserting \Eq~\eqref{eq:newmetrics} into \Eq~\eqref{eq:convectiondiffusionchain} and multiplying by $8/\left(\Deltalk{1}{\kappa}\Deltalk{2}{\kappa}\Deltalk{3}{\kappa}\right)$ gives 
 \begin{equation}\label{eq:convectiondiffusionchainnew}
  \Jkhat\frac{\partial\U}{\partial t}
 +\sum\limits_{l,m=1}^{3}\frac{2}{\Deltalk{l}{\kappa}}\Jdxildxmhat{l}{m}
  \frac{\partial \left(a_{m}\U\right)}{\partial\xil{l}}=\sum\limits_{l,a,m=1}^{3}
  \frac{4}{\Deltalk{l}{\kappa}\Deltalk{a}{\kappa}}\Jdxildxmhat{l}{m}
  \left(\frac{\partial\xilhat{a}}{\partial \xm{m}}
  \frac{\partial(\Bm{m}\U)}{\partial\xil{a}}
  \right).
\end{equation}
Similarly, \Eq~\eqref{eq:convectiondiffusionstrong1} is transformed to 
 \begin{equation}\label{eq:convectiondiffusionstrong1new}
  \begin{split}
  &\Jkhat\frac{\partial\U}{\partial t}+\sum\limits_{l,m=1}^{3}
  \frac{2}{\Deltalk{l}{\kappa}}\frac{\partial}{\partial\xil{l}}\left(\Jdxildxmhat{l}{m}a_{m}\U\right)
-\sum\limits_{l,m=1}^{3}\frac{2}{\Deltalk{l}{\kappa}}a_{m}\U\frac{\partial}{\partial\xil{l}}\left(\Jdxildxmhat{l}{m}\right)
 =\\
 &\sum\limits_{l,a,m=1}^{3}
  \frac{4}{\Deltalk{l}{\kappa}\Deltalk{a}{\kappa}}\frac{\partial}{\partial\xil{l}}
  \left(\Jdxildxmhat{l}{m}\frac{\partial\xilhat{a}}{\partial\xm{m}}
  \frac{\partial(\Bm{m}\U)}{\partial\xil{a}}\right)\\
-&\sum\limits_{l,a,m=1}^{3}
 \frac{4}{\Deltalk{l}{\kappa}\Deltalk{a}{\kappa}}\frac{\partial\xilhat{a}}{\partial \xm{m}}
  \frac{\partial(\Bm{m}\U)}{\partial\xil{a}}
  \frac{\partial}{\partial\xil{l}}\left( \Jdxildxmhat{l}{m}\right),
  \end{split}
\end{equation}
and \Eq~\eqref{eq:convectiondiffusionstrong} is transformed to
 \begin{equation}\label{eq:convectiondiffusionstrongnew}
  \begin{split}
  &\Jkhat\frac{\partial\U}{\partial t}+\sum\limits_{l,m=1}^{3}
  \frac{2}{\Deltalk{l}{\kappa}}\frac{\partial}{\partial\xil{l}}\left(\Jdxildxmhat{l}{m}a_{m}\U\right)= \\&\sum\limits_{l,a,m=1}^{3}
  \frac{4}{\Deltalk{l}{\kappa}\Deltalk{a}{\kappa}}\frac{\partial}{\partial\xil{l}}
  \left(\Jdxildxmhat{l}{m}\frac{\partial\xilhat{a}}{\partial \xm{m}}
  \frac{\partial(\Bm{m}\U)}{\partial\xil{a}}\right).
  \end{split}
\end{equation}
Directly discretizing \Eq~\eqref{eq:convectiondiffusionstrongnew} leads to semi-discrete schemes that are not guaranteed to 
be stable. Instead, a well known approach is to use a canonical splitting of the inviscid terms which is 
constructed by using one half of the inviscid terms in~\eqref{eq:convectiondiffusionchainnew} 
and one half of the inviscid terms in~\eqref{eq:convectiondiffusionstrong1new} 
(see, for instance, \cite{Carpenter2015}). On the other hand,  
the viscous terms are treated in strong conservation form. This process results in
\begin{equation}\label{eq:convectiondiffusionsplit}
  \begin{split}
  &\Jkhat\frac{\partial\U}{\partial t}+\frac{1}{2}\sum\limits_{l,m=1}^{3}\frac{2}{\Deltalk{l}{\kappa}}\left\{
    \frac{\partial}{\partial\xil{l}}\left(\Jdxildxmhat{l}{m}a_{m}\U\right)+
     \Jdxildxmhat{l}{m}\frac{\partial}{\partial\xil{l}}\left(a_{m}\U\right)
    \right\}\\&-\frac{1}{2}\sum\limits_{l,m=1}^{3}\left\{
    a_{m}\U\frac{\partial}{\partial\xil{l}}\left(\Jdxildxm{l}{m}\right)\right\}=
\sum\limits_{l,a,m=1}^{3}\frac{4}{\Deltalk{l}{\kappa}\Deltalk{a}{\kappa}}
  \frac{\partial}{\partial\xil{l}}
  \left(\Jdxildxmhat{l}{m}\frac{\partial\xilhat{a}}{\partial \xm{m}}
  \frac{\partial(\Bm{m}\U)}{\partial\xil{a}}\right),
  \end{split}
\end{equation}
where the last set of terms on the left-hand side are zero by the GCL conditions~\eqref{eq:GCL}. 
Now, consider discretizing Eq.~\eqref{eq:convectiondiffusionsplit} by using 
the following differentiation matrices:
\begin{equation*}
\Dxil{1}\equiv\mat{D}^{(1D)}\otimes\Imat{N}\otimes\Imat{N},\quad
\Dxil{2}\equiv\Imat{N}\otimes\mat{D}^{(1D)}\otimes\Imat{N},\quad
\Dxil{3}\equiv\Imat{N}\otimes\Imat{N}\otimes\mat{D}^{(1D)},
\end{equation*} 
where $\Imat{N}$ is an $N\times N$ identity matrix. The diagonal 
matrices containing the metric Jacobian and metric terms along their diagonals, respectively, are defined as follows:
\begin{equation*}
  \begin{split}
  &\matJkhat{\kappa}\equiv\diag\left(\Jkhat\left(\bmxi{1}\right),\dots,\Jkhat\left(\bmxi{\Nl{\kappa}}\right)\right),\\
  &\matAlmkhat{l}{m}{\kappa}\equiv\diag\left(\Jdxildxmhat{l}{m}(\bmxi{1}),\dots,
  \Jdxildxmhat{l}{m}(\bmxi{\Nl{\kappa}})\right),
  \end{split}
\end{equation*}
where $\Nl{\kappa}\equiv N^{3}$ is the total number of nodes in the element $\kappa$.
With these matrices, the discretization of~\eqref{eq:convectiondiffusionsplit} on the $\kappa\Th$ element is given as
\begin{equation}\label{eq:convectionsplitdisc}
  \begin{split}
  &\matJkhat{\kappa}\frac{\mr{d}\uk}{\mr{d} t}+\frac{1}{2}\sum\limits_{l,m=1}^{3}
  \frac{2}{\Deltalk{l}{\kappa}}a_{m}\left\{\Dxil{l}\matAlmkhat{l}{m}{\kappa}+\matAlmkhat{l}{m}{\kappa}\Dxil{l}\right\}\uk
  \\&-\frac{1}{2}\sum\limits_{l,m=1}^{3}\left\{
    \frac{2}{\Deltalk{l}{\kappa}}a_{m}\diag\left(\uk\right)\Dxil{l}\matAlmkhat{l}{m}{\kappa}\ones{\kappa}\right\}=\\
    &\sum\limits_{l,m,a=1}^{3}\frac{4}{\Deltalk{l}{\kappa}\Deltalk{a}{\kappa}}\Bm{m}\Dxil{l}^{\kappa}\matJkhat{\kappa}^{-1}\matAlmkhat{l}{m}{\kappa}\matAlmkhat{a}{m}{\kappa}\Dxil{a}^{\kappa}\uk,
  \end{split}
\end{equation}
where $\ones{\kappa}$ is a vector of ones of the size of the number of nodes on 
the $\kappa\Th$ element and the SATs have been dropped as they are not important for the current analysis. 
In the same way as in the continuous case, the semi-discrete equations have an associated set of discrete GCL conditions
\begin{equation}\label{eq:discGCLconvection}
\sum\limits_{l=1}^{3}
    \frac{2}{\Deltalk{l}{\kappa}}\Dxil{l}\matAlmkhat{l}{m}{\kappa}\ones{\kappa}=\bm{0}, \quad m = 1,2,3,
\end{equation}
that if satisfied, lead to the following telescoping and therefore provably stable semi-discrete form:
\begin{equation}\label{eq:convectionsplitdisctele}
  \begin{split}
  &\matJkhat{\kappa}\frac{\mr{d}\uk}{\mr{d} t}+\frac{1}{2}\sum\limits_{l,m=1}^{3}
  \frac{2}{\Deltalk{l}{\kappa}}a_{m}\left\{\Dxil{l}\matAlmkhat{l}{m}{\kappa}+\matAlmkhat{l}{m}{\kappa}\Dxil{l}\right\}\uk=\\
    &\sum\limits_{l,m,a=1}^{3}\frac{4}{\Deltalk{l}{\kappa}\Deltalk{a}{\kappa}}\Bm{m}\Dxil{l}^{\kappa}\matJkhat{\kappa}^{-1}\matAlmkhat{l}{m}{\kappa}\matAlmkhat{a}{m}{\kappa}\Dxil{a}^{\kappa}\uk.
  \end{split}
\end{equation}
How to construct metrics that satisfy the discrete GCL conditions \eqref{eq:discGCLconvection} will be detailed later in the paper and is 
one of the major contributions of this work. In the next subsection, attention is focused on the construction of 
appropriate interface coupling procedures that retain the stability (telescoping) properties of 
scheme~\eqref{eq:convectionsplitdisctele} across $h/p$ nonconforming elements (the pure $p$ nonconforming 
case is detailed in~\cite{Fernandez2019_p_ns,Fernandez2019_p_euler,Fernandez2018_TM}).
\subsection{The nonconforming interface}\label{sec:cdnon}
\begin{figure}[!t]
\centering
\input{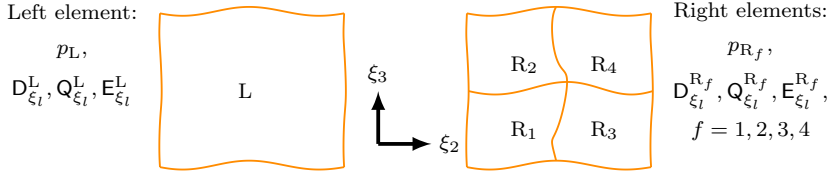}
\caption{Generic surface used to describe the construction of various quantities as well as to simplify 
the analysis of the semi-discrete schemes.}
\label{fig:genint}
\end{figure}
To simplify both the analysis as well as the presentation of the semi-discrete scheme discussed in 
this paper, it is convenient to focus the attention only on a shared interface between two elements, one of which is $h/p$-refined. Without 
loss of generality, five elements are considered that have aligned computational coordinates and 
adjoin along a vertical interface; see Fig.~\ref{fig:genint}. The focus is on nonconformities that arise from both $h$ refinement/coarsening 
as well as local approximations with differing polynomial degrees, as would result from $p$-refinement/coarsening. 
Thus, the generic example, Fig.~\ref{fig:genint}, that will be discussed 
considers a left element having polynomial degree $\pL$ 
and a set of four right elements having polynomial degrees $\pRf{f}$, $f=1,2,3,4$, possibly all having differing degree and 
not equal to $\pL$ (\ie, they originate from a conforming right element that has been $h$-refined and then $p$-refined/coarsened). Therefore, the 
contributions from the left element are identified with subscript or superscript $\mathrm{L}$ and 
similarly for the right elements with subscripts or superscripts $\Rf{f}$; see Fig.~\ref{fig:genint}. 

The analysis proceeds by developing macro matrix differentiation operators over the five elements, 
i.e., composed of elements $\rmL$ and $\Rf{f}$, $f=1,2,3,4$, and then 
determining the required modifications/restrictions so that the resulting operators have the SBP property.
A naive construction, in the computational coordinates of the parent elements, would be the following 
operators:
\begin{equation}\label{eq:naive}
  \begin{split}
    &\DtildehatH{l}\equiv\left[
  \begin{array}{cccc}
    \Dxilhat{l}^{\rmL}&\\
    &\Dxilhat{l}^{\Rf{1}}\\
&&\ddots\\
&&&\Dxilhat{l}^{\Rf{4}}
  \end{array}  
  \right], 
  \quad l=1,2,3,
\end{split}   
\end{equation}
where the element-wise components of the SBP operators, for example $\Dxilhat{1}^{\rmL}$, are constructed as 
\begin{equation*}
\Dxilhat{1}^{\rmL}\equiv\frac{2}{\Deltalk{1}{\rmL}}\Dxil{1},\quad\Dxil{1}\equiv\DoneD_{\rmL}\otimes\Imat{\rmL}\otimes\Imat{\rmL},
\end{equation*}
\begin{equation*}
\M_{\rmL}\equiv\frac{\Deltalk{1}{\rmL}\Deltalk{2}{\rmL}\Deltalk{3}{\rmL}}{8}\PoneD_{\rmL}\otimes\PoneD_{\rmL}\otimes\PoneD_{\rmL},
\end{equation*}
\begin{equation*}
\Qxilhat{1}^{\rmL}\equiv\frac{\Deltalk{2}{\rmL}\Deltalk{3}{\rmL}}{4}\QoneD_{\rmL}\otimes\PoneD_{\rmL}\otimes\PoneD_{\rmL},
\end{equation*}
\begin{equation*}
\Exilhat{1}^{\rmL}\equiv\frac{\Deltalk{2}{\rmL}\Deltalk{3}{\rmL}}{4}\left\{\eNl{}^{\rmL}\left(\eNl{}^{\rmL}\right)\otimes\PoneD_{\rmL}\otimes\PoneD_{\rmL}-
\eonel{}^{\rmL}\left(\eonel{}^{\rmL}\right)\otimes\PoneD_{\rmL}\otimes\PoneD_{\rmL}\right\},
\end{equation*}
where $\Imat{\rmL}$ is an identity matrix of size $\Nl{\rmL}^{1/3}\times\Nl{\rmL}^{1/3}$ and 
$\Nl{\rmL}$ is the total number of nodes in element $\rmL$.

While the $\DtildehatH{2}$ and $\DtildehatH{3}$ macro element operators are by construction SBP 
operators (\ie, they telescope to the boundaries), the $\DtildehatH{1}$ is not an SBP operator. 
Moreover, the $\DtildehatH{1}$ operator does not have any coupling between the elements. 
By using appropriate interface coupling, the $\DtildehatH{1}$ operator can be modified so 
that the result is a macro element SBP operator. To accomplish this, interpolation operators are needed 
that interpolate information from elements $\Rf{f}$ to element $\rmL$ and vice versa.  For simplicity, 
the interpolation operators use only tensor product surface information from the adjoining interface surface.

With this background, general matrix difference operators between the five elements are constructed as 
\begin{equation}
  \Dtildehatl{l}=\Mtilde^{-1}\Qtildehatl{l}=\Mtilde^{-1}\left(\Stildehatl{l}+\frac{1}{2}\Etildehatl{l}\right).
\end{equation}
Focusing on the direction orthogonal to the interface ($\xil{1}$) the relevant matrices are given by 
\begin{equation}\label{eq:marcoD} 
  \begin{split}
  &\Mtilde\equiv\diag\left(\M_{\rmL},\M_{\Rf{1}},\M_{\Rf{2}},\M_{\Rf{3}},\M_{\Rf{4}}\right),\\
  &\Stildel{1}\equiv
  \left[
  \begin{array}{cccccc}
    \Sxilhat{1}^{\rmL}&\mat{\tilde{S}}_{12}&\mat{\tilde{S}}_{13}&\mat{\tilde{S}}_{14}&\mat{\tilde{S}}_{15}\\
    \mat{\tilde{S}}_{21}&\Sxilhat{1}^{\Rf{1}}\\
    \mat{\tilde{S}}_{31}&&\Sxilhat{1}^{\Rf{2}}\\
    \mat{\tilde{S}}_{41}&&&\Sxilhat{1}^{\Rf{3}}\\
    \mat{\tilde{S}}_{51}&&&&\Sxilhat{1}^{\Rf{4}}\\
  \end{array}
  \right],\\
  &\mat{\tilde{S}}_{12}\equiv\frac{\Deltalk{2}{\rmL}\Deltalk{3}{\rmL}}{4}
  \eNl{}^{\rmL}\left(\eonel{}^{\Rf{1}}\right)\Tr\PoneD_{\rmL}\IRftoLoneD{1}{2}\otimes
  \PoneD_{\rmL}\IRftoLoneD{1}{3},\\
  &\mat{\tilde{S}}_{21}\equiv-\frac{\Deltalk{2}{\Rf{1}}\Deltalk{3}{\Rf{1}}}{4}
  \eonel{}^{\Rf{1}}\left(\eNl{}^{\rmL}\right)\Tr\PoneD_{\Rf{1}}\ILtoRfoneD{1}{2}\otimes
  \PoneD_{\Rf{1}}\ILtoRfoneD{1}{3},\\
  &\mat{\tilde{S}}_{13}\equiv\frac{\Deltalk{2}{\rmL}\Deltalk{3}{\rmL}}{4}\eNl{}^{\rmL}
  \left(\eonel{}^{\Rf{2}}\right)\Tr\PoneD_{\rmL}\IRftoLoneD{2}{2}\otimes
  \PoneD_{\rmL}\IRftoLoneD{2}{3},\\
  &\mat{\tilde{S}}_{31}\equiv-\frac{\Deltalk{2}{\Rf{2}}\Deltalk{3}{\Rf{2}}}{4}\eonel{}^{\Rf{2}}
  \left(\eNl{}^{\rmL}\right)\Tr\PoneD_{\Rf{2}}\ILtoRfoneD{2}{2}\otimes
  \PoneD_{\Rf{2}}\ILtoRfoneD{2}{3},\\
  &\mat{\tilde{S}}_{14}\equiv\frac{\Deltalk{2}{\rmL}\Deltalk{3}{\rmL}}{4}\eNl{}^{\rmL}
  \left(\eonel{}^{\Rf{3}}\right)\Tr\PoneD_{\rmL}\IRftoLoneD{3}{2}\otimes
  \PoneD_{\rmL}\IRftoLoneD{3}{3},\\
  &\mat{\tilde{S}}_{41}\equiv-\frac{\Deltalk{2}{\Rf{3}}\Deltalk{3}{\Rf{3}}}{4}\eonel{}^{\Rf{3}}
  \left(\eNl{}^{\rmL}\right)\Tr\PoneD_{\Rf{3}}\ILtoRfoneD{3}{2}\otimes
  \PoneD_{\Rf{3}}\ILtoRfoneD{3}{3},\\
  &\mat{\tilde{S}}_{15}\equiv\frac{\Deltalk{2}{\rmL}\Deltalk{3}{\rmL}}{4}\eNl{}^{\rmL}
  \left(\eonel{}^{\Rf{4}}\right)\Tr\PoneD_{\rmL}\IRftoLoneD{4}{2}\otimes
  \PoneD_{\rmL}\IRftoLoneD{5}{3},\\
  &\mat{\tilde{S}}_{51}\equiv-\frac{\Deltalk{2}{\rmL}\Deltalk{3}{\Rf{4}}}{4}\eonel{}^{\Rf{4}}
  \left(\eNl{}^{\rmL}\right)\Tr\PoneD_{\Rf{4}}\ILtoRfoneD{5}{2}\otimes
  \PoneD_{\Rf{5}}\ILtoRfoneD{5}{3},\\
  &\Etildel{1}\equiv
  \left[
  \begin{array}{cccc}
 \tilde{\mat{E}}_{11}\\
    &\tilde{\mat{E}}_{22}\\
    &&\ddots\\
    &&&\tilde{\mat{E}}_{55}
  \end{array}
  \right],\\ 
  &\tilde{\mat{E}}_{11}\equiv    -\frac{\Deltalk{2}{\rmL}\Deltalk{3}{\rmL}}{4}\eonel{}^{\rmL}\left(\eonel{}^{\rmL}\right)\Tr\otimes\PoneD_{\rmL}\otimes\PoneD_{\rmL},\\
  &\tilde{\mat{E}}_{22} \equiv \frac{\Deltalk{2}{\Rf{1}}\Deltalk{3}{\Rf{1}}}{4}\eNl{}^{\Rf{1}}\left(\eNl{}^{\Rf{1}}\right)\Tr\otimes\PoneD_{\Rf{1}}\otimes\PoneD_{\Rf{1}},\\
  &\tilde{\mat{E}}_{55} \equiv\frac{\Deltalk{2}{\Rf{4}}\Deltalk{3}{\Rf{4}}}{4}\eNl{}^{\Rf{4}}\left(\eNl{}^{\Rf{4}}\right)\Tr\otimes\PoneD_{\Rf{4}}\otimes\PoneD_{\Rf{4}}, \\
\end{split}
\end{equation}
and $\IRftoLoneD{f}{l}$ and $\ILtoRfoneD{f}{l}$ are one-dimensional interpolation operators, in the $l$ direction, from the $\Rf{f}$ element
to the $\rmL$ element and vice versa. 

A necessary constraint that the SBP formalism places on $\Dtildel{1}$ is skew-symmetry of the 
$\Stildel{1}$ matrices. The block-diagonal matrices in $\Stildel{1}$ are 
already skew-symmetric but the off diagonal blocks are not. 
Thus, it is necessary to satisfy the following
conditions:
\begin{equation*}
  \tilde{\mat{S}}_{12}=-\tilde{\mat{S}}_{21}\Tr,\quad
  \tilde{\mat{S}}_{13}=-\tilde{\mat{S}}_{31}\Tr,\quad
  \tilde{\mat{S}}_{14}=-\tilde{\mat{S}}_{41}\Tr.
\end{equation*}
This implies that the interpolation operators are related to each other as follows:
\begin{equation*}
  \IRftoLoneD{f}{l}=\frac{\Deltalk{l}{\Rf{f}}}{\Deltalk{l}{\rmL}}\left(\PoneD_{\rmL}\right)^{-1}\left(\ILtoRfoneD{f}{l}\right)\Tr\PoneD_{\Rf{1}},
  \quad l= 1,2,\quad f=1,2,3,4.
\end{equation*}
This property is denoted as the SBP preserving property because it leads to 
a macro element differentiation matrix that is an SBP operator. 
The interpolation operators from the left element to the right elements is constructed using an $L_{2}$ 
projection approach such that
\begin{equation*}
\begin{split}
&\ILtoRfoneD{f}{l}\equiv\left(\mat{M}_{\mathrm{R}_{f}}^{\xilhat{l}}\right)^{-1}\mat{M}_{\mathrm{Lto}\mathrm{R}_{f}}^{\xilhat{l}},\\
&\mat{M}_{\mathrm{R}_{f}}^{\xilhat{l}}(i,j) \equiv \int_{\xilLkhat{l}{\mathrm{R}_{f}}}^{\xilHkhat{l}{\mathrm{R}_{f}}}
\fnc{L}_{\xilhat{l},\mathrm{R}_{f}}^{(i)}\fnc{L}_{\xilhat{l},\mathrm{R}_{f}}^{(j)}\mr{d}\xilhat{l},\quad i,j = 1,\dots,\left(\pRf{f}+1\right),\\
&\mat{M}_{\mathrm{Lto}\mathrm{R}_{f}}^{\xilhat{l}}(i,j)\equiv \int_{\xilLkhat{l}{\mathrm{R}_{f}}}^{\xilHkhat{l}{\mathrm{R}_{f}}}
\fnc{L}_{\xilhat{l},\mathrm{R}_{f}}^{(i)}\fnc{L}_{\xilhat{l},\mathrm{L}}^{(j)}\mr{d}\xilhat{l},\quad i= 1,\dots,\left(\pRf{f}+1\right)\quad
j= 1,\dots,\left(\pL+1\right),
\end{split}
\end{equation*}
where $\fnc{L}_{\xilhat{l},\mathrm{R}_{f}}^{(i)}$ and $\fnc{L}_{\xilhat{l},\mathrm{L}}^{(j)}$ are the $i^{\mathrm{th}}$ and $j^{\mathrm{th}}$ 
Lagrange basis functions constructed from the nodes of element $\Rf{f}$ and $\mathrm{L}$ in the parent elements coordinates, respectively.
Now theorems on the accuracy of the interpolation operators are presented
\begin{thrm}\label{thrm:accILtoRf2}
The interpolation operator $\ILtoRfoneD{f}{l}$ is of degree $\min\left(\Nl{\rmL}^{1/3}-1,\Nl{\Rf{f}}^{1/3}-1\right)$.
\end{thrm}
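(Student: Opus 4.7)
The plan is to characterize $\ILtoRfoneD{f}{l}$ as the matrix of the $L_{2}$ projection from the space of Lagrange interpolants on the $\rmL$ nodes to the space of Lagrange interpolants on the $\Rf{f}$ nodes (in the $\xilhat{l}$ direction), and then to invoke the fact that an orthogonal projection acts as the identity on elements already in the target space. Since the number of nodes in the respective directions is $\pL+1=\Nl{\rmL}^{1/3}$ and $\pRf{f}+1=\Nl{\Rf{f}}^{1/3}$, the spans of the bases $\{\fnc{L}_{\xilhat{l},\rmL}^{(i)}\}$ and $\{\fnc{L}_{\xilhat{l},\Rf{f}}^{(i)}\}$ are the polynomials of degree $\pL$ and $\pRf{f}$ on the corresponding intervals. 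The claim therefore reduces to showing that the discrete operator reproduces exactly any polynomial of degree at most $\min(\pL,\pRf{f})$.

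The key steps I would carry out in order are as follows. First, fix a test polynomial $\fnc{P}(\xilhat{l})$ of degree $\leq \min(\pL,\pRf{f})$ and define the two nodal vectors $\bm{p}_{\rmL}$ and $\bm{p}_{\Rf{f}}$ by restricting $\fnc{P}$ to the $\rmL$ and $\Rf{f}$ node sets, respectively. The goal is to prove $\ILtoRfoneD{f}{l}\bm{p}_{\rmL}=\bm{p}_{\Rf{f}}$. Second, since $\deg\fnc{P}\leq \pL$, exact nodal interpolation gives $\fnc{P}(\xilhat{l})=\sum_{j}\bm{p}_{\rmL}(j)\,\fnc{L}_{\xilhat{l},\rmL}^{(j)}(\xilhat{l})$ on the parent interval. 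Third, insert this identity inside the definition of $\mat{M}_{\rmL\mathrm{to}\Rf{f}}^{\xilhat{l}}$ to obtain, for each row index $i$,
\begin{equation*}
\bigl(\mat{M}_{\rmL\mathrm{to}\Rf{f}}^{\xilhat{l}}\bm{p}_{\rmL}\bigr)(i)
= \int_{\xilLkhat{l}{\Rf{f}}}^{\xilHkhat{l}{\Rf{f}}} \fnc{L}_{\xilhat{l},\Rf{f}}^{(i)}(\xilhat{l})\,\fnc{P}(\xilhat{l})\,\mr{d}\xilhat{l}.
\end{equation*}
Fourth, use $\deg\fnc{P}\leq\pRf{f}$ to represent $\fnc{P}$ exactly in the $\Rf{f}$ Lagrange basis, $\fnc{P}(\xilhat{l})=\sum_{j}\bm{p}_{\Rf{f}}(j)\,\fnc{L}_{\xilhat{l},\Rf{f}}^{(j)}(\xilhat{l})$, and recognize the right-hand side as $\bigl(\mat{M}_{\Rf{f}}^{\xilhat{l}}\bm{p}_{\Rf{f}}\bigr)(i)$. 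Thus $\mat{M}_{\rmL\mathrm{to}\Rf{f}}^{\xilhat{l}}\bm{p}_{\rmL}=\mat{M}_{\Rf{f}}^{\xilhat{l}}\bm{p}_{\Rf{f}}$. Fifth, invoke the invertibility of $\mat{M}_{\Rf{f}}^{\xilhat{l}}$ (it is the Gram matrix of a Lagrange basis and hence symmetric positive definite) to conclude
\begin{equation*}
\ILtoRfoneD{f}{l}\bm{p}_{\rmL}
= \bigl(\mat{M}_{\Rf{f}}^{\xilhat{l}}\bigr)^{-1}\mat{M}_{\rmL\mathrm{to}\Rf{f}}^{\xilhat{l}}\bm{p}_{\rmL}
= \bm{p}_{\Rf{f}}.
\end{equation*}
Finally, the monomials $\bxilhat{l}^{j}$ for $j=0,\dots,\min(\pL,\pRf{f})$ satisfy the degree hypothesis, so they are exactly reproduced, which matches the SBP definition of the operator's degree.

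The routine mechanics are just the chain of substitutions in steps three and four; the only subtle point, and the one I would treat most carefully, is the observation that the matrix product $(\mat{M}_{\Rf{f}}^{\xilhat{l}})^{-1}\mat{M}_{\rmL\mathrm{to}\Rf{f}}^{\xilhat{l}}$ is exactly the matrix of the $L_{2}$ orthogonal projection expressed in nodal coordinates, since it is this characterization (rather than a pointwise interpolation property) that makes the \emph{target} polynomial degree $\pRf{f}$ come into play. Once this identification is in place, the two exact representations of $\fnc{P}$ on the two bases make the result essentially automatic, and no sharper degree can be expected because generic polynomials of degree $\min(\pL,\pRf{f})+1$ are not representable in at least one of the two bases.
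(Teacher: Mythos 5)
Your proof is correct and follows essentially the approach the paper alludes to (the paper omits the details "for brevity," describing the argument as expanding the matrices and exploiting the reproduction property of the Lagrange bases, which is exactly your steps two through five). The identification of $\bigl(\mat{M}_{\mathrm{R}_{f}}^{\xilhat{l}}\bigr)^{-1}\mat{M}_{\mathrm{Lto}\mathrm{R}_{f}}^{\xilhat{l}}$ as an $L^{2}$ projection in nodal coordinates, the invertibility of the Gram matrix, and the sharpness remark are all sound.
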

\begin{proof}
The proof is standard and is not included for brevity. It follows by expanding out the matrices and taking advantage of the interpolating property of the 
Lagrangian basis functions. 
\end{proof}

The interpolation operators $\IRftoLoneD{f}{l}$ individually are not polynomial exact, but rather, their 
combination is. 
\begin{thrm}\label{thrm:accIRftoL2}
The combined interpolation from the right elements to the left element is of degree 
$\min\left(\Nl{\rmL}-2,\Nl{\Rf{1}}-2,\dots,\Nl{\Rf{4}}-2\right)$, if the norms are suboptimal, \ie, degree $2p-1$, 
otherwise 
it is of degree $\min\left(\Nl{\rmL}-1,\Nl{\Rf{1}}-1,\dots,\Nl{\Rf{4}}-1\right)$. In the five-element example used herein, 
the combined interpolation operator, acting on some function $\fnc{U}$, is
\begin{equation*}
\IRftoL{1}\bm{u}_{\Rf{1}}+\IRftoL{2}\bm{u}_{\Rf{2}}+\IRftoL{3}\bm{u}_{\Rf{3}}+\IRftoL{4}\bm{u}_{\Rf{4}},
\end{equation*} 
where $\bm{u}_{\Rf{f}}$ is the vector containing the evaluation of the function $\fnc{U}$ at the nodes of the abutting surface 
of the $\Rf{f}$ element and 
\begin{equation*}
  \IRftoL{f}\equiv\IRftoLoneD{f}{2}\otimes\IRftoLoneD{f}{3},\quad f=1,2,3,4.
\end{equation*}
\end{thrm}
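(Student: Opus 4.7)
The plan is to dualize. The SBP-preserving relation $\IRftoLoneD{f}{l}=\tfrac{\Deltalk{l}{\Rf{f}}}{\Deltalk{l}{\rmL}}(\PoneD_{\rmL})^{-1}(\ILtoRfoneD{f}{l})\Tr\PoneD_{\Rf{f}}$ makes $\IRftoLoneD{f}{l}$ the norm-weighted adjoint of $\ILtoRfoneD{f}{l}$, and I would exploit this to convert the accuracy statement about the combined right-to-left operator into a weak integrated identity, then verify that identity by combining the exactness of the $L_{2}$ projections $\ILtoRfoneD{f}{l}$ (Theorem~\ref{thrm:accILtoRf2}) with the polynomial-exactness of the diagonal LGL quadratures encoded in $\PoneD_{\rmL}$ and $\PoneD_{\Rf{f}}$.

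First I would reduce to one coordinate direction at a time. Since $\IRftoL{f}=\IRftoLoneD{f}{2}\otimes\IRftoLoneD{f}{3}$ and the right elements tile the left face in a tensor-product fashion, polynomial exactness decouples by direction, so it suffices to establish the analogous one-dimensional identity: for a polynomial $\fnc{U}$ of degree at most $d$ in $\xilhat{l}$,
\begin{equation*}
\sum_{f=1}^{4}\IRftoLoneD{f}{l}\bm{u}_{\Rf{f}}=\bm{u}_{\rmL}.
\end{equation*}
To verify this vector identity I would test it in the $\PoneD_{\rmL}$-weighted inner product against the nodal values $\bm{v}$ of an arbitrary test polynomial $\fnc{V}$ of degree at most $\pL$; such nodal vectors span $\mathbb{R}^{\pL+1}$, so pointwise equality follows from weighted equality for all such $\fnc{V}$. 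Applying the SBP-preserving relation recasts the left-hand side as $\sum_{f}\tfrac{\Deltalk{l}{\Rf{f}}}{\Deltalk{l}{\rmL}}\bm{u}_{\Rf{f}}\Tr\PoneD_{\Rf{f}}\bigl(\ILtoRfoneD{f}{l}\bm{v}\bigr)$, and the task reduces to showing this equals $\bm{u}_{\rmL}\Tr\PoneD_{\rmL}\bm{v}$.

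Next I would interpret $\ILtoRfoneD{f}{l}\bm{v}$ as the nodal values on $\Rf{f}$ of the $L_{2}$ projection $\tilde{\fnc{V}}_{\Rf{f}}$ of $\fnc{V}$ onto polynomials of degree at most $\pRf{f}$, and the matrices $\PoneD_{\Rf{f}}$, $\PoneD_{\rmL}$ as appropriately scaled LGL quadratures. Galerkin orthogonality $\int_{\Rf{f}}\fnc{U}(\fnc{V}-\tilde{\fnc{V}}_{\Rf{f}})\,\mr{d}\xilhat{l}=0$, valid as soon as $\fnc{U}$ lies in the projection target space, replaces $\tilde{\fnc{V}}_{\Rf{f}}$ by $\fnc{V}$ in the left-hand integrand; both sides then become quadrature approximations of the common integral of $\fnc{U}\fnc{V}$ over the face of $\rmL$ (summed over the $\Rf{f}$ tiling). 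These approximations coincide whenever the integrands fall within the exactness envelope of the underlying LGL rule, which yields the desired identity.

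The hard part will be the careful bookkeeping of three simultaneous constraints on $\deg\fnc{U}$. Galerkin orthogonality demands $\deg\fnc{U}\leq\pRf{f}$ for every $f$. Suboptimal LGL exactness on $\Rf{f}$ applied to $\fnc{U}\tilde{\fnc{V}}_{\Rf{f}}$ (degree at most $\deg\fnc{U}+\pRf{f}$) forces $\deg\fnc{U}+\pRf{f}\leq 2\pRf{f}-1$, hence $\deg\fnc{U}\leq\pRf{f}-1$; suboptimal exactness on $\rmL$ applied to $\fnc{U}\fnc{V}$ forces $\deg\fnc{U}\leq\pL-1$. Taking the minimum over $f$ gives $\deg\fnc{U}\leq\min(\pL-1,\pRf{1}-1,\ldots,\pRf{4}-1)$, which is the stated suboptimal-norm bound. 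In the optimal-norm case the LGL exactness window widens by one degree, relaxing the last two inequalities and yielding the alternative bound $\min(\pL,\pRf{1},\ldots,\pRf{4})$.
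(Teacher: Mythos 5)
Your proposal is correct and is essentially the argument the paper defers to by citing \cite{Friedrich2018}: use the SBP-preserving (norm-adjoint) relation to recast the coarse-side interpolant weakly, invoke Galerkin orthogonality of the exact $L_{2}$ projections $\ILtoRfoneD{f}{l}$, and charge the loss of accuracy to the exactness window of the diagonal LGL norms, which reproduces both the suboptimal bound $\min(\Nl{}-2)$ and the optimal bound $\min(\Nl{}-1)$. The only loose phrasing is the claim that the sum over $f$ ``decouples by direction'' (with differing $\pRf{f}$ the four tensor-product blocks do not factor into a product of one-dimensional sums), but this is cosmetic since your dual argument applies verbatim child-by-child in tensor form and the degree counting is per direction.
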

\begin{proof}
This proof follows in the same way as proven elsewhere, for example, see~\cite{Friedrich2018}.
\end{proof}

The semi-discrete skew-symmetric split operator given in \Eq~\eqref{eq:convectiondiffusionsplit}, 
discretized using the macro element operators $\Dtildel{l}$, and metric terms, $\Jtilde$, $\matAlmk{l}{m}{}$,
leads to the following scheme:
\begin{equation}\label{eq:macro}
  \begin{split}
  &\Jtilde\frac{\mr{d}\utilde}{\mr{d}t}+
  \frac{1}{2}\sum\limits_{l,m=1}^{3}a_{m}\left(\Dtildel{l}\matAlmkhat{l}{m}{}+\matAlmkhat{l}{m}{}\Dtildel{l}\right)\utilde\\
  &-\frac{1}{2}\sum\limits_{l,m=1}^{3}a_{m}\diag\left(\utilde\right)\Dtildel{l}\matAlmkhat{l}{m}{}\tildeone
  =\sum\limits_{l,a=1}^{3}\Dtildehatl{l}\matCtildela{l}{a}\Dtildehatl{a},
  \end{split}
\end{equation} 
where
\begin{equation}\label{eq:marcmetrics} 
  \begin{split}
  &\utilde\equiv\left[\uL\Tr,\uRf{1}\Tr,\uRf{2}\Tr,\uRf{3}\Tr,\uRf{4}\Tr\right]\Tr,
  \Jtilde\equiv\diag\left[
  \begin{array}{cccc}  
  \matJkhat{\rmL}\\
  &\matJkhat{\Rf{1}}\\
  &&\ddots\\
  &&&\matJkhat{\Rf{4}}
  \end{array}
  \right],\\
  &\matAlmkhat{l}{m}{}\equiv
  \left[
  \begin{array}{cccc}
  \matAlmkhat{l}{m}{\rmL}\\
  &\matAlmkhat{l}{m}{\Rf{1}}\\
  &&\ddots\\ 
  &&&\matAlmkhat{l}{m}{\Rf{4}}
\end{array}  
  \right],\\
&\matCtildela{l}{a}\equiv\sum\limits_{m=1}^{3}b_{m}\matAlmkhat{l}{m}{}=
  \left[
  \begin{array}{cccc}
  \matChatla{l}{a}_{\rmL}\\
  &\matChatla{l}{m}_{\Rf{1}}\\
  &&\ddots\\ 
  &&&\matChatla{l}{m}_{\Rf{4}}
\end{array}  
  \right].
\end{split}
\end{equation}
As was the case in \Eq~\eqref{eq:convectionsplitdisc}, a necessary condition for stability is that the metric
terms satisfy the following discrete GCL conditions:
\begin{equation}\label{eq:discGCLmacro}
  \sum\limits_{l=1}^{3}\Dtildel{l}\matAlmkhat{l}{m}{}\tildeone=\bm{0}.
\end{equation}
Unfortunately, since $\Dtildel{1}$ is not a tensor product operator
and therefore in general does not commute with the other derivative matrix operators, 
discrete metrics constructed using the analytic formalism of Vinokur and Yee~\cite{Vinokur2002a} 
or Thomas and Lombard~\cite{Thomas1979} will not in general satisfy the discrete GCL condition 
required in \Eq~\eqref{eq:discGCLmacro}. This means that instead, the metric terms 
have to be constructed so that they directly satisfy the GCL constraints.

\begin{remark}
The metric terms are assigned colors; e.g.,  
the time-term Jacobian: $\Jtilde$ or the volume metric terms: $\matAlmkhat{l}{m}{}$.  
Metric terms with common colors form a set that must be computed consistently.  
For example, the time-term Jacobian and the volume metric Jacobian may not be computed in the 
same way.  Another
important set are the surface metrics are introduced in the next subsection.
\end{remark}
\subsection{Isolating the metric terms}
The discrete GCL system~\eqref{eq:discGCLmacro} is highly under-determined and couples the approximation 
of the metric terms in all five elements. In general, the resulting GCL conditions for 
arbitrary $h/p$-refinement would couple large sets of elements making the solution of~\eqref{eq:discGCLmacro} 
difficult if not impossible. Note that the GCL conditions originate from the spatial discretization 
of the skew-symmetric splitting of the convective terms. Thus, if the approximation for those terms can be 
appropriately modified then a set of element-local discrete GCL conditions can be constructed 
making the problem tractable again. This is precisely the approach taken in~\cite{Fernandez2019_p_euler,Fernandez2019_p_ns,Fernandez2018_TM} 
in the context of $p$-refinement/coarsening, and it is the same procedure used herein. 

Examining the volume terms for the approximation of the skew-symmetric splitting 
highlights how to decouple the discrete GCL conditions:
\begin{equation}\label{eq:combined}
  \begin{split}
&\Mtilde\left(\Dtildel{1}\matAlmkhat{1}{m}{}+\matAlmkhat{1}{m}{}\Dtildel{1}\right)=\\
 &\left[
  \begin{array}{ccccc}
   \mat{A}_{11}&
\mat{A}_{12}&\mat{A}_{13}&\mat{A}_{14}&\mat{A}_{15}\\
-\mat{A}_{12}\Tr
      &\mat{A}_{22}\\
      -\mat{A}_{13}\Tr&&\mat{A}_{33}\\ 
       -\mat{A}_{14}\Tr&&&\mat{A}_{44}\\ 
       -\mat{A}_{15}\Tr&&&&\mat{A}_{55} 
  \end{array}
  \right]
  +\left(\Etildel{1}\matAlmkhat{1}{m}{}+\matAlmkhat{1}{m}{}\Etildel{1}\right),\\\\
  &\mat{A}_{11}\equiv \left\{\Sxilhat{1}^{\rmL}\matAlmkhat{1}{m}{\rmL}+\matAlmkhat{1}{m}{\rmL}\Sxilhat{1}^{\rmL}\right\},\\
  &\mat{A}_{1f}=  \frac{\Deltalk{2}{\rmL}\Deltalk{3}{\rmL}}{4}\left\{\begin{array}{l}
    \colorbox{yellow}{\matAlmkhat{1}{m}{\rmL}}\left(\eNl{}^{\rmL}\left(\eonel{}^{\Rf{f}}\right)\Tr\otimes\PoneD_{\rmL}\IRftoLoneD{f}{2}\otimes\PoneD_{\rmL}\IRftoLoneD{f}{3}\right)\\
    +\left(\eNl{}^{\rmL}\left(\eonel{}^{\Rf{f}}\right)\Tr\otimes\PoneD_{\rmL}\IRftoLoneD{f}{2}\otimes\PoneD_{\rmL}\IRftoLoneD{f}{3}\right)\colorbox{red}{\matAlmkhat{1}{m}{\Rf{f}}}\end{array}\right\},\\
    &\mat{A}_{ff}\equiv\left\{\Sxilhat{1}^{\Rf{f}}\matAlmkhat{1}{m}{\Rf{f}}+\matAlmkhat{1}{m}{\Rf{f}}\Sxilhat{1}^{\Rf{f}}\right\},
    \quad f=1,2,3,4.
  \end{split}
\end{equation}
The highlighted terms are responsible for the weak coupling in the discrete GCL constraints. Note that 
these can be replaced with any design order quantities. The approach taken here to decouple 
the discrete GCL conditions is to zero the terms associated with the 
surface metrics on the element $\rmL$, \ie, the terms $\matAlmkhat{1}{m}{\rmL}$ and to specify 
the terms on the $\Rf{f}$ elements, \ie, $\matAlmkhat{1}{m}{\Rf{f}}$. 

\begin{remark}
  In contrast to the $p$-adaptation case~\cite{Fernandez2019_p_ns,Fernandez2019_p_euler,Fernandez2018_TM}, we do not use surface metric terms 
  from both sides of the element. This is because using surface metric terms from the $\rmL$ element results in a coupled  system of equations for the GCL conditions \eqref{eq:discGCLmacro}. 
\end{remark}

The action of the interface coupling is illustrated in Fig.~\ref{fig:footprint}. 
\begin{figure}
     \centering
     \begin{subfigure}[b]{0.45\textwidth}
         \centering
         \includegraphics[width=\textwidth]{./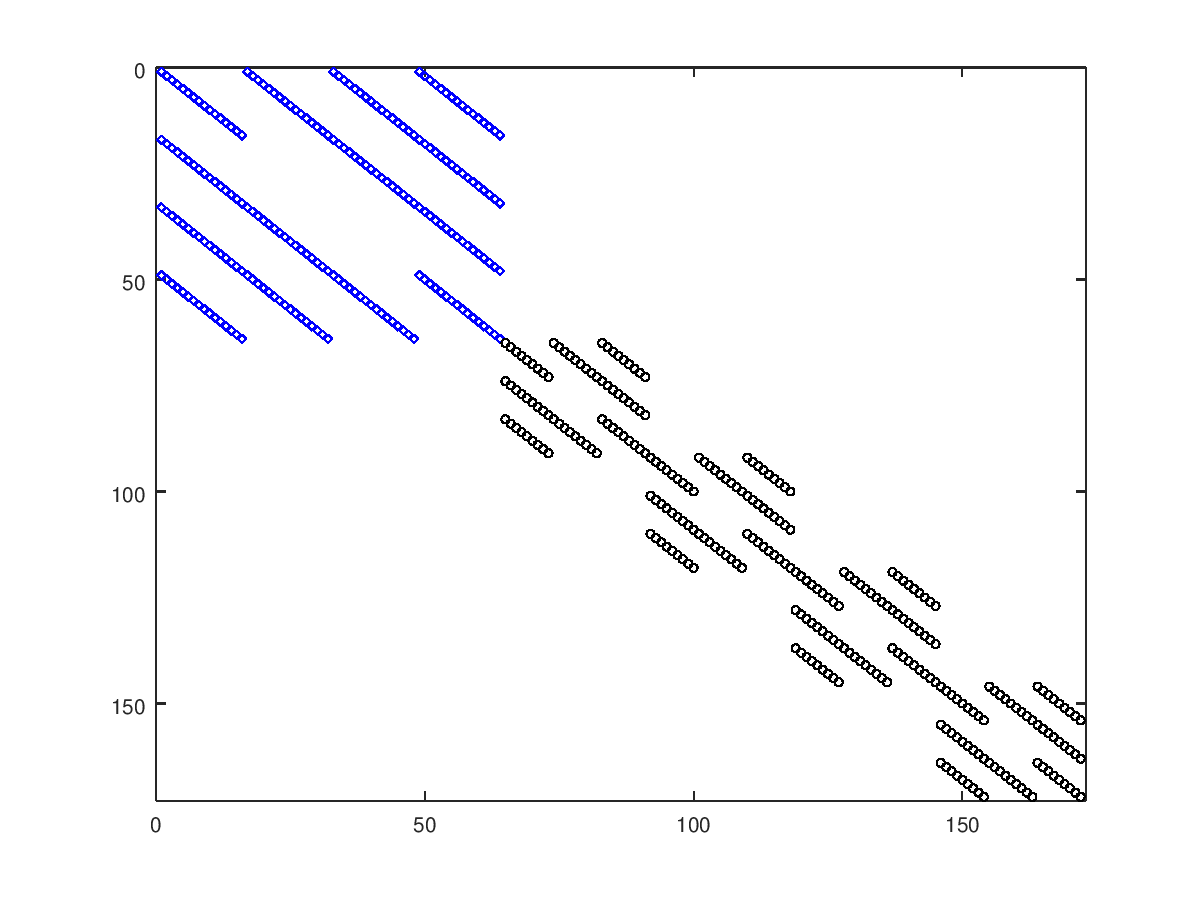}
         \caption{Original macro (non SBP) $\Dtildehatl{1}$ operator without coupling.}
         \label{fig:original}
     \end{subfigure}
     \hfill
     \begin{subfigure}[b]{0.45\textwidth}
         \centering
         \includegraphics[width=\textwidth]{./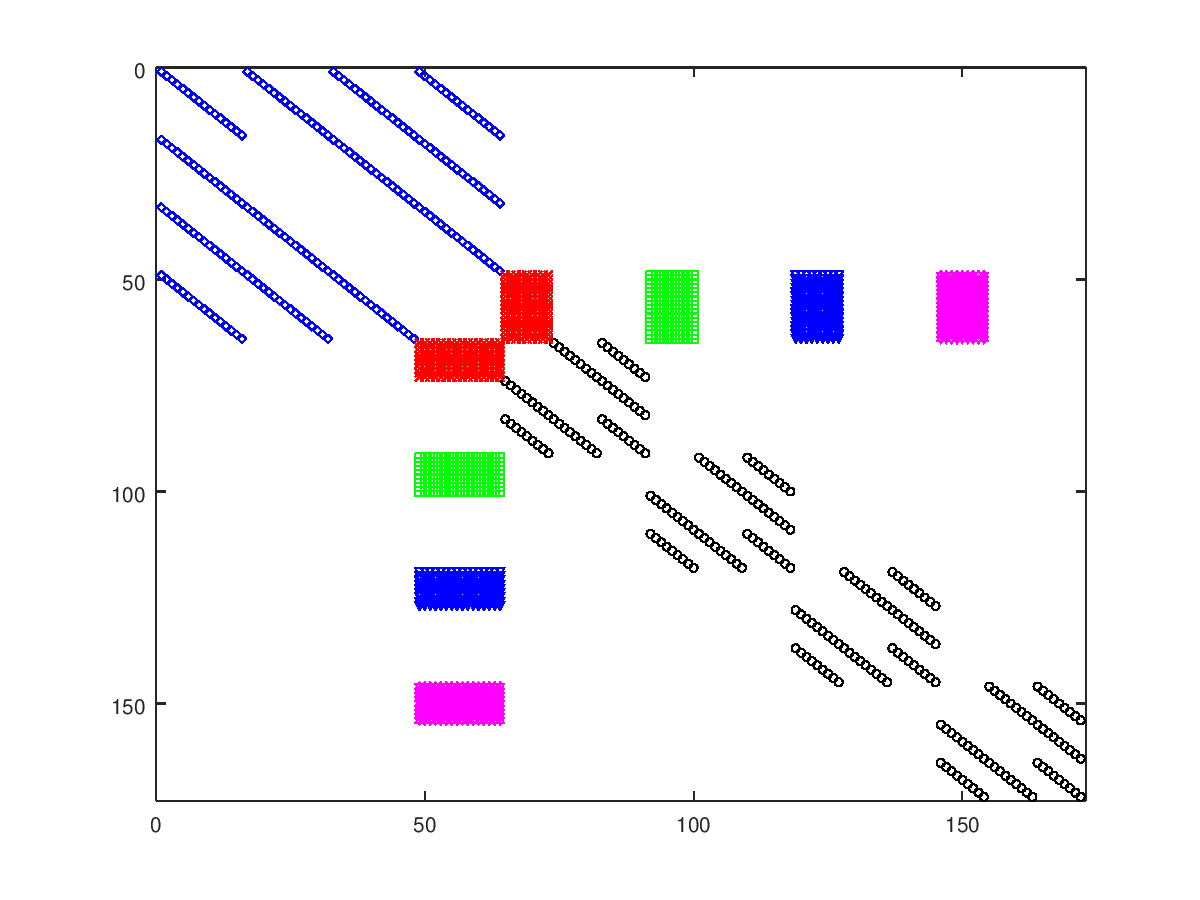}
         \caption{Modified SBP macro element operator $\Dtildehatl{1}$.}
         \label{fig:new}
     \end{subfigure}
\begin{subfigure}[b]{0.45\textwidth}
         \centering
         \includegraphics[width=\textwidth]{./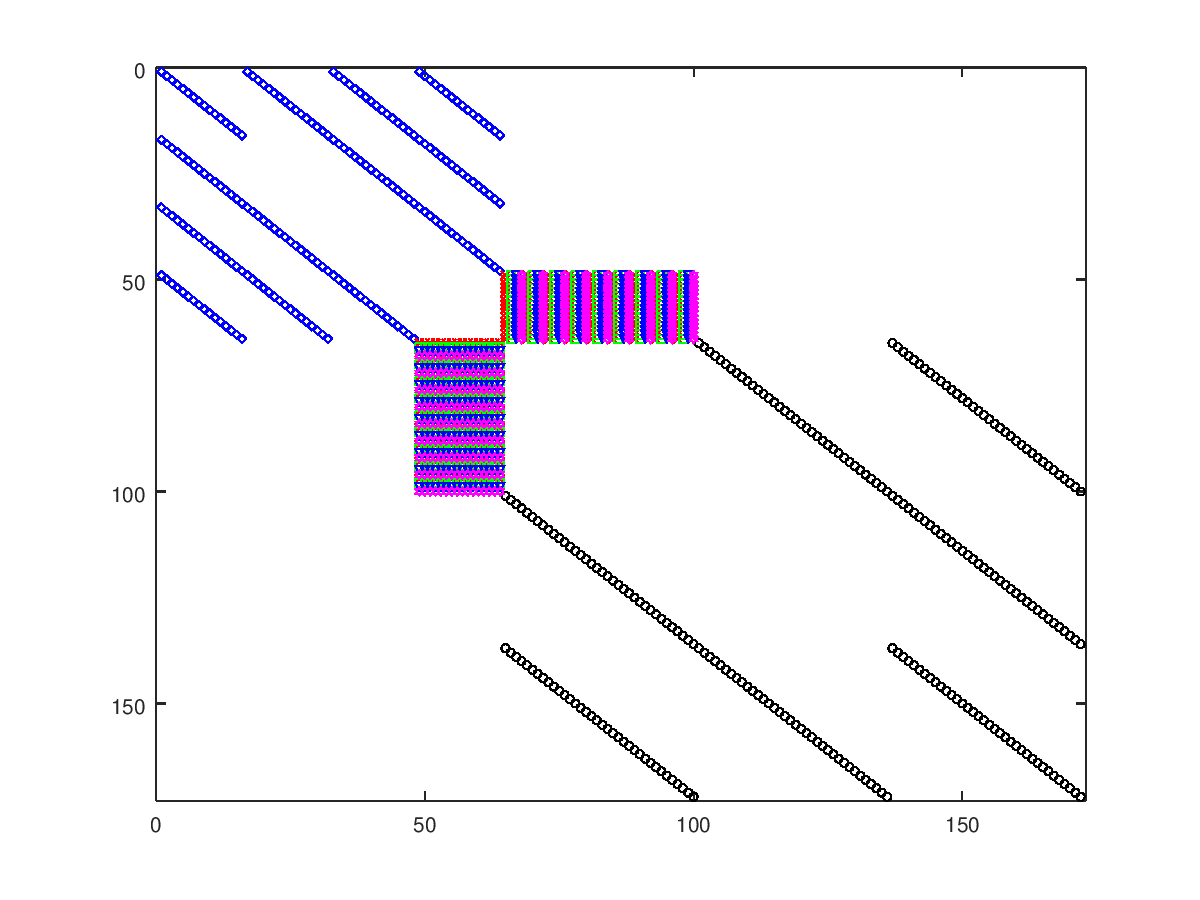}
         \caption{SBP macro element $\Dtildehatl{1}$ organized in lexicographical order.}
         \label{fig:newlex}
     \end{subfigure}
        \caption{Non-zero pattern of the various macro $\Dtildehatl{1}$ operators.}
        \label{fig:footprint}
\end{figure}
Using the above approach, the discrete GCL conditions \eqref{eq:discGCLmacro} become (where contributions from the boundary SATs 
have been ignored)
\begin{equation}\label{eq:discGCLL}
  \begin{split}
  &\sum\limits_{l=1}^{3}\Dxilhat{l}^{\rmL}\matAlmkhat{l}{m}{\rmL}\ones{\rmL}=\\
  &\frac{\Deltalk{2}{\Rf{f}}\Deltalk{3}{\Rf{f}}}{4} \left(\M^{\rmL}\right)^{-1}
 \left\{\RL\Tr\left(\PoneD_{\rmL}\otimes\PoneD_{\rmL}\right)\RL\matAlmkhat{1}{m}{\rmL}\right\}\ones{\rmL}\\
 &-\frac{\Deltalk{2}{\Rf{f}}\Deltalk{3}{\Rf{f}}}{4} \left(\M^{\rmL}\right)^{-1}\sum\limits_{f=1}^{4}
\left\{
\RL\Tr
  \left(\PoneD_{\rmL}\IRftoLoneD{f}{2}\otimes\PoneD_{\rmL}\IRftoLoneD{f}{3}\right)\matAlmkhatgamma{1}{m}{\Rf{f}}{\Ghat}{red}
  \RRf{f}\right\}\ones{\Rf{f}},
  \end{split}
\end{equation}
\begin{equation}\label{eq:discGCLRf}
  \begin{split}
   &\sum\limits_{l=1}^{3}\Dxilhat{l}^{\Rf{f}}\matAlmkhat{l}{m}{\Rf{f}}\ones{\Rf{f}}=\\
  &-\frac{\Deltalk{2}{\Rf{f}}\Deltalk{3}{\Rf{f}}}{4}\left(\M^{\Rf{f}}\right)^{-1}
  \left\{\RRf{f}\Tr
  \left(\PoneD_{\Rf{f}}\otimes
  \PoneD_{\Rf{f}}\right)\RRf{f}\matAlmkhat{1}{m}{\Rf{f}}\right\}\ones{\Rf{f}}\\
 &+ \frac{\Deltalk{2}{\Rf{f}}\Deltalk{3}{\Rf{f}}}{4}\left(\M^{\Rf{f}}\right)^{-1}\left\{
\RRf{f}\Tr
  \matAlmkhatgamma{1}{m}{\Rf{f}}{\Ghat}{red}
 \left(\PoneD_{\Rf{f}}\ILtoRfoneD{f}{2}\otimes\PoneD_{\Rf{f}}\ILtoRfoneD{f}{3}\right)
  \RL
\right\}\ones{\rmL},
  \end{split}
\end{equation}
where
\begin{equation*}
  \begin{split}
    &\RL\equiv\left(\eNl{}^{\rmL}\right)\Tr\otimes\Imat{\rmL}\otimes\Imat{\rmL},\quad
    \RRf{f}\equiv\left(\eonel{}^{\Rf{f}}\right)\Tr\otimes\Imat{\Rf{f}}\otimes\Imat{\Rf{f}}.
  \end{split}
\end{equation*}
The matrices $\matAlmkhatgamma{1}{m}{\Rf{f}}{\Ghat}{red}$ are of size 
$\Nl{\Rf{f}}^{2/3}\times\Nl{\Rf{f}}^{2/3}$ and their
diagonal elements are 
approximations to the 
metrics on the surface nodes of element $\Rf{f}$ at the shared interface. 
In order to decouple the five systems of equations in~\eqref{eq:discGCLL} and 
\eqref{eq:discGCLRf}
the terms in $\matAlmkhatgamma{1}{m}{\Rf{f}}{\Ghat}{red}$ need to be specified, for example, 
using the analytic metrics, which is the approach taken in this paper. For later use, we introduce notation for the macro element $\Dtildelm{l}{m}$ which is 
the macro element operator constructed as described above for the metric terms $\Jdxildxmhat{l}{m}$.

The next section reviews how to construct the metrics so that the 
discrete GCL conditions \eqref{eq:discGCLL} and~\eqref{eq:discGCLRf} are satisfied.
\subsection{Metric solution mechanics}
This section details the approximation of the metric terms so that entropy stability and free-stream 
preservation are maintained. There are two sets of metrics that need to be approximated, the volume 
metrics and the surface metrics. What needs to be satisfied are the discrete GCL 
equations~\eqref{eq:discGCLL} and~\eqref{eq:discGCLRf}, which are recast below in a form that 
is more convenient for developing a solution procedure. Thus,  
multiplying the discrete GCL constraints by $-1$, 
using the SBP property $\mat{Q}=-\mat{Q}\Tr+\mat{E}$, and simplifying the expressions gives  
\begin{equation}\label{eq:discGCLL2}
  \begin{split}
  &\sum\limits_{l=1}^{3}\left(\Qxilhat{l}^{\rmL}\right)\Tr\matAlmkhat{l}{m}{\rmL}\ones{\rmL}=\\
 &\frac{\Deltalk{2}{\Rf{f}}\Deltalk{3}{\Rf{f}}}{4}\sum\limits_{f=1}^{4}
\left\{
\RL\Tr
  \left(\PoneD_{\rmL}\IRftoLoneD{f}{2}\otimes\PoneD_{\rmL}\IRftoLoneD{f}{3}\right)\matAlmkhatgamma{1}{m}{\Rf{f}}{\Ghat}{red}
  \RRf{f}\right\}\ones{\Rf{f}},
  \end{split}
\end{equation}
\begin{equation}\label{eq:discGCLRf2}
  \begin{split}
   &\sum\limits_{l=1}^{3}\Qxilhat{l}^{\Rf{f}}\matAlmkhat{l}{m}{\Rf{f}}\ones{\Rf{f}}=\\
  &-\frac{\Deltalk{2}{\Rf{f}}\Deltalk{3}{\Rf{f}}}{4}\left(\M^{\Rf{f}}\right)^{-1}\left\{
\RRf{f}\Tr
  \matAlmkhatgamma{1}{m}{\Rf{f}}{\Ghat}{red}
 \left(\PoneD_{\Rf{f}}\ILtoRfoneD{f}{2}\otimes\PoneD_{\Rf{f}}\ILtoRfoneD{f}{3}\right)
  \RL
\right\}\ones{\rmL},
  \end{split}
\end{equation}
where $\Imat{\Rf{f}}$ is an identity matrix of size $\Nl{\Rf{f}}^{1/3}\times\Nl{\Rf{f}}$ and 
$\Nl{\Rf{f}}$ is the total number of nodes in element $\Rf{f}$.

Note that the contributions from the $\Exilhat{l}$ from the left-hand side 
(\ie, coming from the step $\mat{Q}=-\mat{Q}\Tr+\mat{E}$) related to the boundaries of the macro element 
are ignored. This contributions interact with the boundary SATs in the same way as the interface does. 

The metric terms in \Eq~\eqref{eq:discGCLL2} and \Eq~\eqref{eq:discGCLRf2} are set by solving a 
strictly convex quadratic optimization problem, based on the algorithm proposed in Crean \etal~\cite{Crean2018} 
(see also~\cite{Fernandez2019_p_euler,Fernandez2018_TM}). Here the procedure is exemplified in terms 
of the discrete GCL system on the $\rmL$ element:
\begin{equation}\label{eq:optvol}
\begin{split}
&\min\limits_{\amk{m}{\rmL}}\frac{1}{2}\left(\amk{m}{\rmL}-\amk{m,\,\mathrm{target}}{\rmL}\right)\Tr
\left(\amk{m}{\rmL}-\amk{m,\,\mathrm{target}}{\rmL}\right),\\&\text{subject to }
\Mkopt{\rmL}\amk{m}{\rmL}=\cmk{m}{\rmL},\quad m=1,2,3,
\end{split}
\end{equation}
where the vectors $\amk{m}{\rmL}$ and $\amk{m,\,\mathrm{target}}{\rmL}$ are the optimized and 
target volume metric terms, respectively. Herein, the analytic metric terms are the
target volume metrics. Furthermore, 
\begin{equation*}
\left(\amk{m}{\rmL}\right)\Tr\equiv
\ones{\rmL}\Tr\left[
\matAlmkhat{1}{m}{\rmL},\matAlmkhat{2}{m}{\rmL},\matAlmkhat{3}{m}{\rmL}
\right],
\end{equation*} 
\begin{equation*}
\Mkopt{\rmL}\equiv\left[
  \left(\Qxilhat{1}^{\rmL}\right)\Tr,
  \left(\Qxilhat{2}^{\rmL}\right)\Tr,
  \left(\Qxilhat{3}^{\rmL}\right)\Tr
\right],
\end{equation*}
and
\begin{equation*}
\cmk{m}{\rmL}\equiv\frac{\Deltalk{2}{\Rf{f}}\Deltalk{3}{\Rf{f}}}{4}\sum\limits_{f=1}^{4}
\left\{
\RL\Tr
  \left(\PoneD_{\rmL}\IRftoLoneD{f}{2}\otimes\PoneD_{\rmL}\IRftoLoneD{f}{3}\right)\matAlmkhatgamma{1}{m}{\Rf{f}}{\Ghat}{red}
  \RRf{f}\right\}\ones{\Rf{f}},
\end{equation*}
with $\amk{m}{\rmL}$ of size $3\Nl{\rmL}\times 1$, $\Mkopt{\rmL}$ of size $\Nl{\rmL}\times3\Nl{\rmL}$, and 
$\cmk{m}{\rmL}$ of size $\Nl{\rmL}\times 1$, where $\Nl{\rmL}$ is the total number of nodes in element 
$\rmL$. The optimal solution, in the Cartesian $2$-norm, is given by (see Proposition $1$ in 
Crean~\etal~\cite{Crean2018})
\begin{equation}\label{eq:optvol}
\amk{m}{\rmL}=\amk{m,\,\mathrm{target}}{\rmL}-\left(\Mkopt{\rmL}\right)^{\dagger}
\left(\Mkopt{\rmL}\amk{m,\,\mathrm{target}}{\rmL}-\cmk{m}{\rmL}\right),
\end{equation}
where $\left(\Mkopt{\rmL}\right)^{\dagger}$ is the Moore--Penrose pseudo inverse of 
$\Mkopt{\rmL}$. This pseudo inverse is computed using a singular value decomposition of $\Mkopt{\rmL}$
\begin{equation*}
\Mkopt{\rmL}=\Uk{\rmL}\Sigmak{\rmL}\left(\Vk{\rmL}\right)\Tr,\qquad
\left(\Mkopt{\rmL}\right)^{\dagger}=\Vk{\rmL}\left(\Sigmak{\rmL}\right)^{\dagger}
\left(\Uk{\rmL}\right)\Tr.
\end{equation*}
The unitary matrix $\Uk{\rmL}$ is of size $\Nl{\rmL}\times\Nl{\rmL}$, $\Sigmak{\rmL}$ is a diagonal matrix 
of size $\Nl{\rmL}\times\Nl{\rmL}$ containing the singular values of $\Mkopt{\rmL}$, and 
$\left(\Vk{\rmL}\right)\Tr$ is of size $\Nl{\rmL}\times3\Nl{\rmL}$ with orthonormal rows. 
The optimal solution 
$\amk{m}{\rmL}$ given by~\eqref{eq:optvol} satisfies the discrete GCL relations~\eqref{eq:discGCLL2} if the 
following constraint is satisfied:
\begin{equation}\label{eq:ckconstraint}
\ones{\rmL}\Tr\cmk{m}{\rmL}=0.
\end{equation}
The constraint~\eqref{eq:ckconstraint} is a discrete approximation to the integral of the GCL 
equations over the domain $\OhatL$, \ie
\begin{equation}\label{eq:cmkexact}
\ones{\rmL}\Tr\cmk{m}{\rmL}\approx
\int_{\OhatL}\sum\limits_{l=1}^{3}\frac{\partial}{\partial\xilhat{l}}\left(\Jdxildxmhat{l}{m}\right)\mr{d}\Ohat=
\oint_{\GammahatL}\sum\limits_{l=1}^{3}\Jdxildxmhat{l}{m}\nxilhat{l}\mr{d}\Gammahat=0.
\end{equation}
In fact, our approach is to specify the surface metric terms $\matAlmkhatgamma{l}{m}{\Rf{f}}{\Ghat}{red}$ such that 
$\ones{\rmL}\Tr\cmk{m}{\rmL}$ is exactly equal to the surface integral term on the RHS 
of~\eqref{eq:cmkexact}.

The constraint~\eqref{eq:ckconstraint} arises because $\Mkopt{\rmL}$ has one zero singular value 
associated with the constant singular vector. This implies that in order for~\eqref{eq:discGCLL2} to 
have an exact solution, $\cmk{m}{\rmL}$ must be orthogonal to the constant vector 
(see~\cite{Fernandez2019_p_euler} for a complete discussion). The next 
theorem is one of the main results of this work and gives the conditions on the analytic metric terms so that the constraint~\eqref{eq:ckconstraint} is 
satisfied.
\begin{thrm}
If the analytic metric terms used to populate $\matAlmkhatgamma{l}{m}{\Rf{f}}{\Ghat}{red}$ are at most the degree of 
the weakest cubature rule involved in the nonconforming interface, then the 
constraints~\eqref{eq:ckconstraint} are satisfied.
\end{thrm}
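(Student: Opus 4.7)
The plan is to show that $\ones{\rmL}\Tr\cmk{m}{\rmL}$ reproduces exactly the analytic closed-surface integral $\oint_{\GammahatL}\sum_{l=1}^{3}\Jdxildxmhat{l}{m}\nxilhat{l}\,\mr{d}\Gammahat$, which vanishes identically by applying the divergence theorem to the continuous GCL identity $\sum_{l=1}^{3}\partial(\Jdxildxmhat{l}{m})/\partial\xilhat{l}=0$. Because $\cmk{m}{\rmL}$ assembles surface contributions from every face of element $\rmL$ (the excerpt displays only the nonconforming-interface piece explicitly), the argument splits naturally into conforming and nonconforming faces.

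On each conforming face of $\rmL$ the surface cubature is built from $\PoneD_{\rmL}\otimes\PoneD_{\rmL}$ acting on the nodal values of the analytic metric. Since the one-dimensional LGL rule encoded in $\PoneD_{\rmL}$ is exact for polynomials up to its cubature degree, and by hypothesis the analytic metric is of at most that degree, each such conforming-face contribution coincides with the exact analytic face integral.

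For the nonconforming face, I would employ the SBP-preserving identity
\[
\PoneD_{\rmL}\,\IRftoLoneD{f}{l} \;=\; \frac{\Deltalk{l}{\Rf{f}}}{\Deltalk{l}{\rmL}}\bigl(\ILtoRfoneD{f}{l}\bigr)\Tr\PoneD_{\Rf{f}},
\]
together with the fact (Theorem~\ref{thrm:accILtoRf2}) that $\ILtoRfoneD{f}{l}$ preserves constants, to pull $\ones{\rmL}$ through the interpolation and rewrite the $\rmL$-side weighted row $\ones{\rmL}\Tr\RL\Tr(\PoneD_{\rmL}\IRftoLoneD{f}{2}\otimes\PoneD_{\rmL}\IRftoLoneD{f}{3})$ as a scaled $\Rf{f}$-side tensor-product weight row proportional to $\ones{\Rf{f}}\Tr\PoneD_{\Rf{f}}\otimes\ones{\Rf{f}}\Tr\PoneD_{\Rf{f}}$. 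Combined with the interface prefactor, this produces precisely the two-dimensional LGL cubature on the $\Rf{f}$ sub-face rescaled to its physical extent in parent coordinates. Applying this cubature to $\matAlmkhatgamma{1}{m}{\Rf{f}}{\Ghat}{red}$ populated from the analytic $\Jdxildxmhat{1}{m}$, and using the hypothesis that this metric is a polynomial of degree no greater than the exactness of the weakest one-dimensional cubature among the five elements touching the interface, each sub-face quadrature returns the exact sub-face integral. Summing over $f=1,\ldots,4$ reconstructs the exact analytic integral over the full shared interface.

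Adding the exact conforming-face integrals to the exact nonconforming-face integral assembles the full analytic closed-surface integral over $\GammahatL$, which vanishes by the divergence theorem applied to the analytic GCL relation, establishing $\ones{\rmL}\Tr\cmk{m}{\rmL}=0$. The principal obstacle is the bookkeeping in the nonconforming step: correctly tracking the scalings $\Deltalk{l}{\Rf{f}}/\Deltalk{l}{\rmL}$ introduced by the SBP-preserving identity, confirming that they combine with the interface prefactor to recover the physically correct sub-face cubature, and verifying that the two tangential tensor directions decouple cleanly so that one-dimensional polynomial exactness lifts to the required two-dimensional sub-face exactness on each $\Rf{f}$.
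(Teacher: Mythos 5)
Your proposal is correct and follows essentially the same route as the paper, whose proof is only a one-sentence sketch invoking ``the accuracy of the interpolation operators and the associated cubature rules interacting at the nonconforming interface''; your argument fills in exactly those ingredients (the SBP-preserving identity to transfer the $\rmL$-side weights to the $\Rf{f}$ sub-face cubatures, LGL exactness under the degree hypothesis, and the vanishing of the resulting exact closed-surface integral via the continuous GCL), consistent with the paper's stated intent that $\ones{\rmL}\Tr\cmk{m}{\rmL}$ equal the surface integral in~\eqref{eq:cmkexact}. No gaps beyond the bookkeeping you already flag.
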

\begin{proof}
  The proof follows from the accuracy of the interpolation operators and the associated 
  cubature rules interacting at the nonconforming interface. 
\end{proof}
Thus far, the concentration has been on nonconforming faces. For nonconforming elements (\ie, 
elements that have at least one nonconforming face), on conforming faces the surface metric terms 
that appear in the discrete GCL~\eqref{eq:discGCLL2} and~\eqref{eq:discGCLRf2}  are taken as the surface metric terms of the 
adjoining face. The metric terms of the adjoining face are approximated using a standard approach, 
such as that of Vinokur and Yee~\cite{Vinokur2002a} or Thomas and Lombard~\cite{Thomas1979}, and 
\Theorem $\,2$ of Ref.~\cite{Fernandez2019_p_euler} guarantees that metric terms computed in this way 
satisfy the constraint~\eqref{eq:ckconstraint}.
\section{Nonlinearly stable schemes: Viscous Burgers' equation}\label{sec:Burgers}
The general $h/p$-nonconforming machinery presented in the previous section will be applied to 
the compressible Navier--Stokes equations in Section~\ref{sec:NS}. However, in order for the resulting
discretization to have the telescoping property, and therefore nonlinear stability, 
necessitates special nonlinear approximations that lead to this property. In this section, the required 
Hadamard derivative formulation is exemplified using the simple viscous Burgers' equation. 

The viscous Burgers' equation and its canonically split form are 
\begin{equation}\label{eq:Burgerssplit}
  \frac{\partial\U}{\partial t}+\frac{\partial}{\partial \xm{1}}\left(\frac{\U^{2}}{2}\right) = \frac{\partial^{2}\U}{\partial\xm{1}^{2}} \quad;\quad
  \frac{\partial\U}{\partial t}+\frac{1}{3}\frac{\partial}{\partial \xm{1}}\left(\U^{2}\right)+\frac{\U}{3}\frac{\partial\U}{\partial \xm{1}} = \frac{\partial^{2}\U}{\partial\xm{1}^{2}},
\end{equation}
where, as in the convection-diffusion equation, the splitting is on the inviscid terms.
Applying an energy analysis to the split form of~\eqref{eq:Burgerssplit} gives (for details see, for example, \cite{Carpenter2015}) 
\begin{equation}\label{eq:energyBurgers}
  \frac{1}{2}\frac{\mr{d}\|\U\|^{2}}{\mr{d}t}+\oint_{\Gamma}\frac{\U^{3}}{3}\nxm{1}\mr{d}\Gamma=\oint_{\Gamma}\U\frac{\partial\U}{\partial\xm{1}}\nxm{1}\mr{d}\Gamma-\int_{\Omega}\left(\frac{\partial\U}{\partial\xm{1}}\right)^{2}\mr{d}\Omega,\,\|\U\|^{2}\equiv\int_{\Omega}\U^{2}\mr{d}\Omega.
\end{equation}

The semi-discrete proof of stability that will be constructed shortly follows the continuous proof in a discrete sense 
such that when contracted by $\bm{u}\Tr\M$, \ie, the discrete analogue of multiplying by the solution 
and integrating in space, the result is the sum of spatial terms that telescope to the boundaries.

Ignoring the imposition of boundary conditions (\ie, SATs) and concentrating on a single element, 
then the discretization of~\eqref{eq:Burgerssplit} with SBP operators is given as 
\begin{equation}\label{eq:Burgerssplitdisc}
  \frac{\mr{d}\bm{u}}{\mr{d} t}+ \frac{1}{3}\Dxm{1}\diag\left(\bm{u}\right)\bm{u}+\frac{1}{3}\diag\left(\bm{u}\right)\Dxm{1}\bm{u}= \Dxm{1}\bm{\Theta},\quad
  \bm{\Theta}\equiv\Dxm{1}\bm{u}.
\end{equation}
Multiplying~\eqref{eq:Burgerssplitdisc} by $\bm{u}\Tr\M$ results in 
\begin{equation}\label{eq:Burgerssplitdiscenergy}
  \frac{1}{2}\frac{\mr{d}\bm{u}\Tr\M \bm{u}}{\mr{d} t}+\frac{1}{3}\left(\bm{u}^{3}(N)-\bm{u}^{3}(1)\right)= 
  \bm{u}\Tr\Exm{1}\Dxm{1}\bm{u}-\bm{u}\Tr\Dxm{1}\Tr\M\Dxm{1}\bm{u},
\end{equation}
where each term mimics the corresponding term in \eqref{eq:energyBurgers}. 
Furthermore, Eq.~\eqref{eq:Burgerssplitdiscenergy} has the telescoping property, 
\ie, the remaining terms are at the boundaries.

Notice that the key to obtaining a telescoping semi-discrete form is the 
careful discretization of the inviscid terms (in this case using a canonical split form), whereas 
the viscous terms were directly discretized in strong conservation form. 

The discrete inviscid terms in~\eqref{eq:Burgerssplitdisc} can be recast 
using the Hadamard derivative formalism. The equivalence between the 
split form and he Hadamard derivative operators is given as follows 
\begin{equation}\label{eq:Burgershadamard}
   2\Dxm{1}\circ\matFxm{1}{\bm{u}}{\bm{u}}\ones{1}
  \quad \leftrightarrow \quad
  \frac{1}{3}\Dxm{1}\diag\left(\bm{u}\right)\bm{u}+\frac{1}{3}\diag\left(\bm{u}\right)\Dxm{1}\bm{u} \: .
\end{equation}
%
Two components are use to construct the Hadamard derivative operator: first, an SBP derivative operator, and 
second a two-point flux function related to inviscid flux vector being discretely differentiated.
The Hadamard derivative operator combines these two components such that two-point fluxes are 
constructed between the center point and all other points of dependency within
the SBP stencil.  The SBP telescoping property~\cite{Fisher2013} results from precise 
local cancellation of spatial terms and can then be extended directly to nonlinear operators.  

In the case of the Burgers' equation, the two-point flux function that results in an equivalence 
between the split form and the Hadamard derivative operator is~\cite{Tadmor2003,Carpenter2015} 
\begin{equation*}
\fxmsc{m}{\bmui{i}}{\bmui{j}}\equiv 
 \frac{\left\{\left(\bmui{i}\right)^{2}+\bmui{i}\bmui{j}+\left(\bmui{j}\right)^{2}\right\}}{6},
\end{equation*}
where $\bmui{i}$ and $\bmui{j}$ are the $i\Th$ and $j\Th$ components of $\bm{u}$.
For the purpose of demonstration, a simple SBP operator constructed on the LGL nodes $\left(-1,0,1\right)$
is used:
\begin{equation*}
    \mat{D}_{\xm{1}}=
    \left[ \begin {array}{ccc} -\frac{3}{2}&2&-\frac{1}{2}\\ \noalign{\medskip}-\frac{1}{2}&0&\frac{1}{2}
\\ \noalign{\medskip}\frac{1}{2}&-2&\frac{3}{2}\end {array} \right]. 
\end{equation*}
The two argument Hadamard matrix flux, \matFxm{m}{\bm{u}}{\bm{u}} is given as
\begin{equation*}
  \begin{split}
    &\matFxm{m}{\bm{u}}{\bm{u}}=\\
    &\left[
        \begin{array}{ccc}
            \frac{\left(\bmui{1}\right)^{2}}{2}&
            \frac{\left(\bmui{1}\right)^{2}+\bmui{1}\bmui{2}+\left(\bmui{2}\right)^{2}}{6}&
            \frac{\left(\bmui{1}\right)^{2}+\bmui{1}\bmui{3}+\left(\bmui{3}\right)^{2}}{6}\\\\
\frac{\left(\bmui{2}\right)^{2}+\bmui{2}\bmui{1}+\left(\bmui{1}\right)^{2}}{6}&
\frac{\left(\bmui{2}\right)^{2}}{2}&
\frac{\left(\bmui{2}\right)^{2}+\bmui{2}\bmui{3}+\left(\bmui{3}\right)^{2}}{6}\\\\
\frac{\left(\bmui{3}\right)^{2}+\bmui{3}\bmui{1}+\left(\bmui{1}\right)^{2}}{6}&
\frac{\left(\bmui{3}\right)^{2}+\bmui{3}\bmui{2}+\left(\bmui{2}\right)^{2}}{6}&
\frac{\left(\bmui{3}\right)^{2}}{2}
        \end{array}
    \right].
      \end{split}
\end{equation*}
Thus, 
\begin{equation*}
  \begin{split}
    &\mat{D}_{\xm{1}}\circ\matFxm{m}{\bm{u}}{\bm{u}}\bm{1}=\\
    &\left[
      \arraycolsep=0.5pt
        \begin{array}{ccc}
           {\scriptscriptstyle-\frac{3}{2} \frac{\left(\bmui{1}\right)^{2}}{2}}&
            {\scriptscriptstyle2\frac{\left(\bmui{1}\right)^{2}+\bmui{1}\bmui{2}+\left(\bmui{2}\right)^{2}}{6}}&
            {\scriptscriptstyle-\frac{1}{2}\frac{\left(\bmui{1}\right)^{2}+\bmui{1}\bmui{3}+\left(\bmui{3}\right)^{2}}{6}}\\\\
{\scriptscriptstyle-\frac{1}{2}\frac{\left(\bmui{2}\right)^{2}+\bmui{2}\bmui{1}+\left(\bmui{1}\right)^{2}}{6}}&
{\scriptscriptstyle0}&
{\scriptscriptstyle\frac{1}{2}\frac{\left(\bmui{2}\right)^{2}+\bmui{2}\bmui{3}+\left(\bmui{3}\right)^{2}}{6}}\\\\
{\scriptscriptstyle\frac{1}{2}\frac{\left(\bmui{3}\right)^{2}+\bmui{3}\bmui{1}+\left(\bmui{1}\right)^{2}}{6}}&
{\scriptscriptstyle-2\frac{\left(\bmui{3}\right)^{2}+\bmui{3}\bmui{2}+\left(\bmui{2}\right)^{2}}{6}}&
{\scriptscriptstyle\frac{3}{2}\frac{\left(\bmui{3}\right)^{2}}{2}}
        \end{array}
    \right]
    \left[
        \begin{array}{c}
            {\scriptscriptstyle1}\\\\
            {\scriptscriptstyle1}\\\\
            {\scriptscriptstyle1}
        \end{array}
        \right].
      \end{split}
\end{equation*}
The equivalence between the two approaches can be determined via inspection.

The general notation necessary for discretizing the inviscid fluxes of the compressible 
Navier--Stokes equations is now detailed. Consider the discretization of the derivative of a flux vector $\Fxm{m}$ in the
$\xm{m}$ Cartesian direction. As for the Burgers' equation, the key components are 
an SBP matrix difference operator, $\Dxm{m}$, and a two argument matrix flux function,
 $\matFxm{m}{\uk}{\ur}$, which is constructed from diagonal matrices and is defined block-wise as
\begin{equation*}
    \begin{split}
  &\left(\matFxm{m}{\uk}{\ur}\right)\left(e(i-1)+1:ei,e(j-1)+1:ej\right) \equiv  \diag\left(
  \fxmsc{m}{\uki{(i)}}{\uri{(j)}}    
  \right),\\\\
  &\uki{(i)}\equiv\uk\left(e(i-1)+1:ei\right),\;
  \uri{(j)}\equiv\ur\left(e(j-1)+1:ej\right),\\\\
  &i = 1\dots,\Nl{\kappa}^{3},\; j = 1,\dots,\Nl{r}^{3},
    \end{split}
\end{equation*}
where $e$ is the number of equations in the system of PDEs. In the context of the 
compressible Navier--Stokes equations $e=5$ and the two argument matrix flux function is of size 
$\left(e\,\Nl{\kappa}^{3}\right)\times\left(e\,\Nl{r}^{3}\right)$, where $e\Nl{\kappa}^{3}$ and $e\Nl{r}^{3}$ are 
the total number of entries in the vectors $\uk$ and $\ur$ corresponding the solution 
variables in elements $\kappa$ and $r$, respectively. Therefore, $\uki{(i)}$ is the vector of the $e$ solution 
variables evaluated at the $i\Th$ node.
The vectors $\fxmsc{m}{\uki{(i)}}{\uri{(j)}}$ 
are constructed from two-point flux functions that are symmetric in their arguments, 
$\left(\uki{(i)},\uri{(j)}\right)$, and consistent, \ie, 
\begin{equation*}
 \fxmsc{m}{\uki{(i)}}{\uri{(j)}}= \fxmsc{m}{\uri{(j)}}{\uki{(i)}},
\quad\fxmsc{m}{\uki{(i)}}{\uki{(i)}} = \Fxm{m}\left(\uki{(i)}\right),
\end{equation*}
where $\Fxm{m}$ is the inviscid flux vector in the $\xm{m}$ Cartesian direction. 
With the notation defined, the Hadamard differentiation operator for the inviscid flux,
$\frac{\partial\Fxm{m}}{\partial\xm{m}}$, is constructed as 
\begin{equation*}
    2\Dxm{m}\circ\matFxm{m}{\qk{\kappa}}{\qk{\kappa}}\ones{\kappa}\approx
    \frac{\partial\Fxm{m}}{\partial\xm{m}}\left(\bm{x}^{\kappa}\right),
\end{equation*}
where $\bm{x}^{\kappa}$ is the vector of vectors containing the nodal coordinates. 
The resulting approximation has equivalent order properties as constructing an approximation 
to the derivative of the flux vector directly using an SBP operator $\Dxm{m}$ 
(see Theorem  $1$ in Crean~\etal~\cite{Crean2018}). 
\section{Application to the compressible Navier--Stokes equations}\label{sec:NS}
Herein, the nonconforming algorithm presented in Section~\ref{sec:hplin} is combined with the 
mechanics presented in Section~\ref{sec:Burgers} to construct an entropy conservative discretization 
of the compressible Navier--Stokes equations for arbitrary $h/p$-nonconforming meshes. 
First, the continuous equations and entropy analysis are reviewed in Section~\ref{sec:reviewNSentropy}. 
 Second, in Section~\ref{sec:semiNS}, the semi-discrete algorithm is presented and analyzed.

\subsection{Review of the continuous entropy analysis}\label{sec:reviewNSentropy}
The entropy stable algorithm is constructed by discretizing the skew-symmetric form 
of the compressible Navier--Stokes equations, with the viscous flux recast in terms
of entropy variables. This form of the equations is given as
\begin{equation}\label{eq:NSCCS}
\begin{split}
&\Jk\frac{\partial\Qk}{\partial t}+\frac{1}{2}\sum\limits_{l,m=1}^{3}\left(
\frac{\partial }{\partial \xil{l}}\left(\Jdxildxm{l}{m}\FxmI{m}\right)
+\Jdxildxm{l}{m}\frac{\partial \FxmI{m}}{\partial \xil{l}}\right)\\
&-\frac{1}{2}\sum\limits_{l,m=1}^{3}\FxmI{m}\frac{\partial}{\partial\xil{l}}\left(\Jdxildxm{l}{m}\right)
=\sum\limits_{l,a=1}^{3}
\frac{\partial}{\partial\xil{l}}\left(\Chatij{l}{a}\frac{\partial\bfnc{W}}{\partial \xil{a}}\right),
\end{split}
\end{equation}
where the last set of terms on the left-hand side are zero by the GCL relations~\eqref{eq:GCL}. Furthermore,
\begin{equation}\label{eq:Chatij}
\Chatij{l}{a}=\Jdxildxm{l}{m}\sum\limits_{m,j=1}^{3}\Cij{m}{j}\frac{\partial\xil{a}}{\partial x_{j}},
\end{equation}
$\Q$ is the vector of conserved variables, and $\FxmI{m}$ is the inviscid flux vector in the 
$\xm{m}$ direction. The vector of conserved variables is given by 
\begin{equation*}
\Q = \left[\rho,\rho\Um{1},\rho\Um{2},\rho\Um{3},\rho\E\right]\Tr,
\end{equation*}
where $\rho$ denotes the density, $\bm{\fnc{U}} = \left[\Um{1},\Um{2},\Um{3}\right]\Tr$ is the velocity 
vector, and $\E$ is the specific total energy. The inviscid fluxes are given as
\begin{equation*}
  \begin{split}
\FxmI{m} = &\left[\rho\Um{m},\rho\Um{m}\Um{1}+\delta_{m,1}\fnc{P},\rho\Um{m}\Um{2}+\delta_{m,2}\fnc{P},\right.\\
&\left.\rho\Um{m}\Um{3}+\delta_{m,3}\fnc{P},\rho\Um{m}\fnc{H}\right]\Tr,
  \end{split}
\end{equation*}
where $\fnc{P}$ is the pressure, $\fnc{H}$ is the specific total enthalpy and $\delta_{i,j}$ is the 
Kronecker delta.

The necessary constituent relations are
\begin{equation*}
\fnc{H} = c_{\fnc{P}}\fnc{T}+\frac{1}{2}\bm{\fnc{U}}\Tr\bm{\fnc{U}},\quad \fnc{P} = \rho R \fnc{T},\quad R = \frac{R_{u}}{M_{w}},
\end{equation*}
where $\fnc{T}$ is the temperature, $R_{u}$ is the universal gas constant, $M_{w}$ is the molecular weight of the gas, 
and $c_{\fnc{P}}$ is the specific heat capacity at constant pressure. Finally, the specific thermodynamic entropy is given as 
\begin{equation*}
s=\frac{R}{\gamma-1}\log\left(\frac{\fnc{T}}{\fnc{T}_{\infty}}\right)-R\log\left(\frac{\rho}{\rho_{\infty}}\right),\quad \gamma=\frac{c_{p}}{c_{p}-R},
\end{equation*}
where $\fnc{T}_{\infty}$ and $\rho_{\infty}$ are the reference temperature and density, respectively. 

The viscous fluxes, $\FxmV{m}$, have been recast in terms of the entropy variables, 
$\bfnc{W}\equiv\partial\fnc{S}/\partial\bfnc{Q}$, where $\fnc{S}$ is the entropy function 
$\fnc{S}\equiv-\rho s$: 
\begin{equation}\label{eq:Fxment}
\FxmV{m}=\sum\limits_{j=1}^{3}\Cij{m}{j}\frac{\partial\bfnc{W}}{\partial x_{j}}.
\end{equation}
The viscous fluxes written in components are given as
\begin{equation}\label{eq:Fv}
\FxmV{m}=\left[0,\tau_{1,m},\tau_{2,m},\tau_{3,m},
\sum\limits_{i=1}^{3}\tau_{i,m}\fnc{U}_{i}-\kappa\frac{\partial \fnc{T}}{\partial\xm{m}}\right]\Tr,
\end{equation} 
and the viscous stresses are defined as
\begin{equation}\label{eq:tau}
\tau_{i,j} = \mu\left(\frac{\partial\fnc{U}_{i}}{\partial x_{j}}+\frac{\partial\fnc{U}_{j}}{\partial x_{i}}
-\delta_{i,j}\frac{2}{3}\sum\limits_{n=1}^{3}\frac{\partial\fnc{U}_{n}}{\partial x_{n}}\right),
\end{equation}
where $\mu(T)$ is the dynamic viscosity and $\kappa(T)$ is the thermal conductivity (not to be confused with the choice of 
parameter for element numbering). 

The compressible Navier--Stokes equations have a convex extension, 
that when integrated over the physical domain, $\Omega$, depends only on the boundary data and negative semi-definite dissipation terms. This convex extension depends on an entropy function, $\fnc{S}$, 
and it is used to prove the stability in the $L^{2}$ norm.
Here, a brief review of the 
entropy stability analysis is given. A detailed presentation 
is available, for instance, in \cite{dafermos-book-2010,Svard2015,Carpenter2015}. 

Under the assumption that that the entropy function 
$\fnc{S}$ is convex, which is guaranteed if $\rho,\fnc{T}>0$, then the vector of entropy variables, 
$\bfnc{W}$, simultaneously contracts all of the inviscid flux as follows:
\begin{equation}\label{eq:wcontract}
  \bfnc{W}\Tr\frac{\partial\FxmI{m}}{\partial \xil{l}}=
  \frac{\partial\fnc{S}}{\partial\bfnc{Q}}
  \frac{\partial\FxmI{m}}{\partial \bfnc{Q}}
  \frac{\partial\bfnc{Q}}{\partial\xil{l}}=
  \frac{\partial\fxm{m}}{\partial\bfnc{Q}}\frac{\partial\FxmI{m}}{\partial \bfnc{Q}},\quad
  l,\,m=1,\,2,\,3,
\end{equation}
where $\fxm{m}$ is the entropy flux in the $\xm{m}$ direction. 

The entropy stability analysis proceeds by first multiplying (contracting) \Eq~\eqref{eq:NSCCS} 
by the transpose of the entropy variables, $\bfnc{W}\Tr$, 
\begin{equation}\label{eq:NSCCSW}
\begin{split}
&\overbrace{\Jk\bfnc{W}\Tr\frac{\partial\Qk}{\partial t}}^{I}
+\frac{1}{2}\sum\limits_{l,m=1}^{3}\left(
\overbrace{\bfnc{W}\Tr\frac{\partial }{\partial \xil{l}}\left(\Jdxildxm{l}{m}\FxmI{m}\right)}^{II}
+\overbrace{\Jdxildxm{l}{m}\bfnc{W}\Tr\frac{\partial \FxmI{m}}{\partial \xil{l}}}^{III}\right)
=\\&\sum\limits_{l,a=1}^{3}
\overbrace{\bfnc{W}\Tr\frac{\partial}{\partial\xil{l}}
\left(\Chatij{l}{a}\frac{\partial\bfnc{W}}{\partial \xil{a}}\right)}^{IV}.
\end{split}
\end{equation}
With the help of \Eq~\eqref{eq:wcontract} and the product rule, the terms $I-IV$ are now simplified:
\begin{equation}\label{eq:I}
I\equiv\Jk\bfnc{W}\Tr\frac{\partial\Qk}{\partial t} = 
\Jk\frac{\partial\fnc{S}}{\partial\bfnc{Q}}\frac{\partial\Qk}{\partial t}=
\Jk\frac{\partial\Sk}{\partial t},
\end{equation}
\begin{equation}\label{eq:II}
  \begin{split}
  II\equiv\bfnc{W}\Tr\frac{\partial }{\partial \xil{l}}\left(\Jdxildxm{l}{m}\FxmI{m}\right)
  &=\Jdxildxm{l}{m}\bfnc{W}\Tr\frac{\partial\FxmI{m} }{\partial \xil{l}}
+\bfnc{W}\Tr\FxmI{m}\frac{\partial }{\partial \xil{l}}\left(\Jdxildxm{l}{m}\right)\\ 
&=\Jdxildxm{l}{m}\frac{\partial\fxm{m} }{\partial \xil{l}}
+\bfnc{W}\Tr\FxmI{m}\frac{\partial }{\partial \xil{l}}\left(\Jdxildxm{l}{m}\right),
  \end{split}
\end{equation}
\begin{equation}\label{eq:III}
  III\equiv\Jdxildxm{l}{m}\bfnc{W}\Tr\frac{\partial \FxmI{m}}{\partial \xil{l}}=
  \Jdxildxm{l}{m}\frac{\partial \fxm{m}}{\partial \xil{l}},
  \end{equation}
  \begin{equation}\label{eq:IV}
    IV\equiv\bfnc{W}\Tr\frac{\partial}{\partial\xil{l}}
\left(\Chatij{l}{a}\frac{\partial\bfnc{W}}{\partial \xil{a}}\right)=
\frac{\partial}{\partial\xil{l}}
\left(\bfnc{W}\Tr\Chatij{l}{a}\frac{\partial\bfnc{W}}{\partial \xil{a}}\right)
-\frac{\partial\bfnc{W}\Tr}{\partial\xil{l}}
\Chatij{l}{a}\frac{\partial\bfnc{W}}{\partial \xil{a}}.
  \end{equation}
  Substituting \Eq~\eqref{eq:I} through~\eqref{eq:IV} into~\eqref{eq:NSCCSW} results in
\begin{equation}\label{eq:NSCCSW2}
\begin{split}
&\Jk\frac{\partial\Sk}{\partial t}+\sum\limits_{l,m=1}^{3}
 \Jdxildxm{l}{m}\frac{\partial\fxm{m} }{\partial \xil{l}}
+\frac{\bfnc{W}\Tr}{2}\sum\limits_{m=1}^{3}\FxmI{m}\cancelto{0\text{ via GCL\eqref{eq:GCL}}}{\sum\limits_{l=1}^{3}\frac{\partial }{\partial \xil{l}}\left(\Jdxildxm{l}{m}\right)}
=\\&\sum\limits_{l,a=1}^{3}
\left\{
\frac{\partial}{\partial\xil{l}}
\left(\bfnc{W}\Tr\Chatij{l}{a}\frac{\partial\bfnc{W}}{\partial \xil{a}}\right)
-\frac{\partial\bfnc{W}\Tr}{\partial\xil{l}}
\Chatij{l}{a}\frac{\partial\bfnc{W}}{\partial \xil{a}}
\right\}.
\end{split}
\end{equation}
Bringing the metric terms within the derivative on the term $\Jdxildxm{l}{m}\frac{\partial\fxm{m} }{\partial \xil{l}}$ 
and using the product rule results in 
\begin{equation}\label{eq:NSCCSW3}
\begin{split}
&\Jk\frac{\partial\Sk}{\partial t}+\sum\limits_{l,m=1}^{3}
 \frac{\partial}{\partial \xil{l}}\left(\Jdxildxm{l}{m}\fxm{m}\right)
 -\sum\limits_{m=1}^{3}\fxm{m}\cancelto{0\text{ via GCL~\eqref{eq:GCL}}}
 {\sum\limits_{m=1}^{3}\frac{\partial}{\partial \xil{l}}\left(\Jdxildxm{l}{m}\right)}
=\\&\sum\limits_{l,a=1}^{3}
\left\{
\frac{\partial}{\partial\xil{l}}
\left(\bfnc{W}\Tr\Chatij{l}{a}\frac{\partial\bfnc{W}}{\partial \xil{a}}\right)
-\frac{\partial\bfnc{W}\Tr}{\partial\xil{l}}
\Chatij{l}{a}\frac{\partial\bfnc{W}}{\partial \xil{a}}
\right\}.
\end{split}
\end{equation}
Rearranging \Eq~\eqref{eq:NSCCSW3} and expanding the dissipation term yields 
\begin{equation}\label{eq:NSCCSW4}
\begin{split}
&\Jk\frac{\partial\Sk}{\partial t}=\sum\limits_{l=1}^{3}
 \frac{\partial}{\partial \xil{l}}\left(
  -\sum\limits_{m=1}^{3} \Jdxildxm{l}{m}\fxm{m}
 +\sum\limits_{a=1}^{3}\left(\bfnc{W}\Tr
 \Chatij{l}{a}\frac{\partial\bfnc{W}}{\partial \xil{a}}\right) 
  \right)\\ 
&-\left[
  \begin{array}{c}
  \frac{\partial\bfnc{W}}{\partial \xil{1}}\\
  \frac{\partial\bfnc{W}}{\partial \xil{2}}\\
\frac{\partial\bfnc{W}}{\partial \xil{3}}
\end{array}
\right]\Tr
\underbrace{
\left[
\begin{array}{ccc}
  \Chatla{1}{1}&\Chatla{1}{2}&\Chatla{1}{3}\\
  \Chatla{1}{2}\Tr&\Chatla{2}{2}&\Chatla{2}{3}\\
  \Chatla{1}{3}\Tr&\Chatla{2}{3}\Tr&\Chatla{3}{3}
\end{array}  
\right]}_{\equiv\hat{\mat{C}}}
\underbrace{
\left[
  \begin{array}{c}
  \frac{\partial\bfnc{W}}{\partial \xil{1}}\\
  \frac{\partial\bfnc{W}}{\partial \xil{2}}\\
\frac{\partial\bfnc{W}}{\partial \xil{3}}
\end{array}
\right]}_{\equiv\hat{\bfnc{W}}},
\end{split}
\end{equation}
where the matrix $\hat{\mat{C}}$ is symmetric semi-definite (see~\cite{Fisher2012phd} for details).

Integrating \Eq~\eqref{eq:NSCCSW4} in space and using integration by parts gives
\begin{equation}\label{eq:NSCCSW5}
\begin{split}
\int_{\Ohatk}\Jk\frac{\partial\Sk}{\partial t}\mr{d}\Ohat\leq&\sum\limits_{l=1}^{3}
 \oint_{\Ghatk}\left(
  -\sum\limits_{m=1}^{3} \Jdxildxm{l}{m}\fxm{m}
 +\sum\limits_{a=1}^{3}\left(\bfnc{W}\Tr
 \Chatij{l}{a}\frac{\partial\bfnc{W}}{\partial \xil{a}}\right)
  \right)\nxil{l}\mr{d}\Ghat.
\end{split}
\end{equation}

An $L^{2}$ bound on the solution is derived from inequality~\eqref{eq:NSCCSW5} 
by integrating in time and assuming 1) nonlinearly well-posed boundary and initial 
conditions, and 2) positivity of temperature and density. Then, the result 
can be turned into a bound on the solution in terms of the data of the problem~\cite{dafermos-book-2010,Svard2015}. 
\subsection{An $h/p$-nonconforming algorithm}\label{sec:semiNS}
 The skew-symmetrically split form of the compressible Navier--Stokes equations~\eqref{eq:NSCCS}
 is discretized by combining the macro element SBP operator in Section~\ref{sec:cdnon} 
 with the nonlinear mechanics presented in Section~\ref{sec:Burgers}. Thus, the discretization 
of~\eqref{eq:NSCCS} over the macro element is given as
\begin{equation}\label{eq:NSCCS1disc}
\begin{split}
&\Jtilde\frac{\mr{d}\qtilde}{\partial t}+
\sum\limits_{l,m=1}^{3}\Dtildelm{l}{m}
\circ\matFxm{m}{\qtilde}{\qtilde}\tildeone\\
&-\frac{1}{2}\sum\limits_{l,m=1}^{3}\diag\left(\bm{f}_{\xm{m}}^{I}\right)\Dtildehatl{l}\matAlmkhat{l}{m}{}\tildeone
=\sum\limits_{l,a=1}^{3}\Dtildehatl{l}\matChatla{l}{a}\Dtildehatl{a}\wtilde,\\ 
&\qtilde\equiv\left[\qk{\rmL}\Tr,\qk{\Rf{1}}\Tr,\dots,\qk{\Rf{4}}\Tr\right]\Tr,\;
\wtilde\equiv\left[\wk{\rmL}\Tr,\wk{\Rf{1}}\Tr,\dots,\wk{\Rf{4}}\Tr\right]\Tr,
\end{split}
\end{equation}
where $\bm{f}_{\xm{m}}^{I}$ is a vector of vectors constructed by evaluating $\FxmI{m}$ at the 
mesh nodes. 
Note that the factor of $\frac{1}{2}$ on the skew-symmetric inviscid volume terms has been absorbed 
as a result of using the nonlinear operator, \eg, $2\Dxil{l}\circ\matFxm{m}{\qk{\kappa}}{\qk{\kappa}}\ones{\kappa}\approx\frac{\partial\Fxm{m}}{\partial\xil{l}}(\bm{\xi}^{\kappa})$.
Furthermore, the flux function matrix, $\matFxm{m}{\qtilde}{\qtilde}$, is constructed using a two-point flux function, $\fxmsc{m}{\tildeqi{(i)}}{\tildeqi{(i)}}$, that satisfies 
the Tadmor's shuffle condition~\cite{Tadmor2003}
\begin{equation}\label{eq:shuffle}
\left(\tildewi{(i)}-\tildewi{(j)}\right)\Tr\fxmsc{m}{\tildeqi{(i)}}{\tildeqi{(i)}}=\tilde{\bm{\psi}}_{\xm{m}}^{(i)}-\tilde{\bm{\psi}}_{\xm{m}}^{(j)}.
\end{equation}
The $\Dtildelm{l}{m}$ operators are constructed 
from the scalar conservation law 
operators developed in Section~\eqref{sec:hplin} by tensoring them with
an identity 
matrix, $\Imat{5}$, to accommodate the system of five equations. For example,
\begin{equation*}
  \Dxilhat{1}^{\rmL}\equiv\barDxilhat{1}\otimes\Imat{5},\quad \barDxilhat{1}^{\rmL}\equiv\frac{2}{\Deltalk{1}{\rmL}}\DoneD_{\rmL}\otimes\Imat{\rmL}\otimes\Imat{\rmL}.
\end{equation*}
Similar to the linear stability, entropy stability necessitates that the last set of terms on the 
left-hand side of~\eqref{eq:NSCCS1disc} be zero and leads to the same set of discrete GCL conditions.

The semi-discrete entropy analysis follows the continuous analysis in a one-to-one fashion. 
In order to simplify the derivation, the following matrices are introduced: 
\begin{equation*}
  \begin{split}
  &\Dtildem{m}\equiv\sum\limits_{l=1}^{3}\Dtildelm{l}{m}
,\quad \Qtildem{m}\equiv\Mtilde\Dtildem{m},\quad
\Etildem{m}\equiv\Qtildem{m}+\Qtildem{m}\Tr.
  \end{split}
\end{equation*}
Assuming that the discrete GCL conditions are satisfied, \eqref{eq:NSCCS1disc} becomes
\begin{equation}\label{eq:NSCCS1disc2}
\begin{split}
&\Jtilde\frac{\mr{d}\qtilde}{\partial t}+
\sum\limits_{m=1}^{3}\Dtildem{m}\circ\matFxm{m}{\qtilde}{\qtilde}\tildeone=\sum\limits_{l,a=1}^{3}\Dtildehatl{l}\matChatla{l}{a}\Dtildehatl{a}\wtilde.
\end{split}
\end{equation}
Multiplying \Eq~\eqref{eq:NSCCS1disc2} by $\tilde{\bm{w}}\Tr\Mtilde$ (the discrete analogue of 
multiplying by $\bfnc{W}\Tr$ and integrating over the domain) yields
\begin{equation}\label{eq:NSCCS1disc3}
\begin{split}
&\Jtilde\wtilde\Tr\Mtilde\frac{\mr{d}\qtilde}{\partial t}+
\sum\limits_{m=1}^{3}\wtilde\Qtildem{m}\circ\matFxm{m}{\qtilde}{\qtilde}\tildeone=
\sum\limits_{l,a=1}^{3}\wtilde\Tr\Qtildehatl{l}\matChatla{l}{a}\Dtildehatl{a}\wtilde.
\end{split}
\end{equation}
Taking the transpose of one half of the volume term on the left-hand side of \Eq~\eqref{eq:NSCCS1disc3}, 
using the SBP property $\mat{Q}=-\mat{Q}\Tr+\mat{E}$, and the symmetry of the two-point flux function matrix, 
results in
\begin{equation}\label{eq:NSCCS1disc4}
\begin{split}
\Jtilde\wtilde\Tr\Mtilde\frac{\mr{d}\qtilde}{\partial t}+
\frac{1}{2}\sum\limits_{m=1}^{3}&\left(
  \wtilde\Qtildem{m}\circ\matFxm{m}{\qtilde}{\qtilde}\tildeone
  -\tildeone\Tr\Qtildem{m}\circ\matFxm{m}{\qtilde}{\qtilde}\Tr\wtilde\right.\\
&\left.+\tildeone\Tr\Etildem{m}\circ\matFxm{m}{\qtilde}{\qtilde}\Tr\wtilde
\right)=\\
&\sum\limits_{l,a=1}^{3}\wtilde\Tr\Etildehatl{l}\matChatla{l}{a}\Dtildehatl{a}\wtilde
-\sum\limits_{l,a=1}^{3}\wtilde\Tr\Dtildehatl{l}\Tr\Mtilde\matChatla{l}{a}\Dtildehatl{a}\wtilde.
\end{split}
\end{equation}
To further reduce the left-hand side terms requires the following theorem 
(this is Theorem $8$ in~\cite{Fernandez2018_TM} and the proof is given in Appendix D of that document):
\begin{thrm}\label{thrm:telescope}
Consider the matrix of $\overline{\mat{A}}$ of size $\Nl{\kappa}\times \Nl{r}$ with a tensor extension 
$\mat{A}\equiv\overline{\mat{A}}\otimes\Imat{5}$, and a two argument matrix flux 
function $\matFxm{m}{\qk{\kappa}}{\qk{r}}$ constructed from the two-point 
flux function $\fxmsc{m}{\qki{\kappa}{(i)}}{\qki{r}{(j)}}$ that satisfies the Tadmor's shuffle condition
\[
\left(\wki{\kappa}{(i)}-\wki{r}{(j)}\right)\Tr\fxmsc{m}{\qki{\kappa}{(i)}}{\qki{r}{(j)}}=
\psixmki{m}{\kappa}{i}-\psixmki{m}{r}{j}
\]
and is symmetric, \ie, $\fxmsc{m}{\qki{\kappa}{(i)}}{\qki{r}{(j)}}= \fxmsc{m}{\qki{r}{(j)}}{\qki{\kappa}{(i)}}$, then
\begin{equation*}
\wk\Tr\left(\mat{A}\right)\circ\matFxm{m}{\qk{\kappa}}{\qk{r}}\ones{r}-
\ones{\kappa}\Tr\mat{A}\circ\matFxm{m}{\qk{\kappa}}{\qk{r}}\wk{r} =
\left(\psixmk{m}{\kappa}\right)\Tr\overline{\mat{A}}\barones{r}-\barones{\kappa}\Tr\overline{\mat{A}}\psixmk{m}{r}.
\end{equation*}
\end{thrm}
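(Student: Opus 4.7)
The plan is to prove the identity by directly expanding both quadratic forms on the left-hand side into double sums over node indices, applying Tadmor's shuffle condition pointwise, and then re-collecting the result back into the matrix-vector form stated on the right. The argument is essentially computational and the only conceptual ingredient is the shuffle identity itself; the main task is bookkeeping of the block structure induced by the tensor with $\Imat{5}$.

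First I would unpack the Hadamard products block-wise. Since $\mat{A}=\overline{\mat{A}}\otimes\Imat{5}$, the $(i,j)$ block of $\mat{A}$ of size $5\times 5$ is $\overline{\mat{A}}(i,j)\,\Imat{5}$, while the corresponding $(i,j)$ block of $\matFxm{m}{\qk{\kappa}}{\qk{r}}$ is $\diag\!\left(\fxmsc{m}{\qki{\kappa}{(i)}}{\qki{r}{(j)}}\right)$. Both are diagonal, so their Hadamard product is $\overline{\mat{A}}(i,j)\,\diag\!\left(\fxmsc{m}{\qki{\kappa}{(i)}}{\qki{r}{(j)}}\right)$. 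Multiplying on the right by $\ones{r}$ collapses each $j$-block into the column vector $\fxmsc{m}{\qki{\kappa}{(i)}}{\qki{r}{(j)}}$, and contracting with $\wk{\kappa}$ gives
\begin{equation*}
\wk{\kappa}\Tr\bigl(\mat{A}\circ\matFxm{m}{\qk{\kappa}}{\qk{r}}\bigr)\ones{r}
=\sum_{i,j}\overline{\mat{A}}(i,j)\,\bigl(\wki{\kappa}{(i)}\bigr)\Tr\fxmsc{m}{\qki{\kappa}{(i)}}{\qki{r}{(j)}}.
\end{equation*}
By exactly symmetric reasoning (using $\ones{\kappa}$ on the left and $\wk{r}$ on the right),
\begin{equation*}
\ones{\kappa}\Tr\bigl(\mat{A}\circ\matFxm{m}{\qk{\kappa}}{\qk{r}}\bigr)\wk{r}
=\sum_{i,j}\overline{\mat{A}}(i,j)\,\bigl(\fxmsc{m}{\qki{\kappa}{(i)}}{\qki{r}{(j)}}\bigr)\Tr\wki{r}{(j)}.
\end{equation*}

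Next I would subtract the two scalar double sums and factor the $\overline{\mat{A}}(i,j)$ coefficient out of the inner product involving $\wki{\kappa}{(i)}-\wki{r}{(j)}$ and the (symmetric in its arguments) two-point flux. Applying Tadmor's shuffle condition $\bigl(\wki{\kappa}{(i)}-\wki{r}{(j)}\bigr)\Tr\fxmsc{m}{\qki{\kappa}{(i)}}{\qki{r}{(j)}}=\psixmki{m}{\kappa}{i}-\psixmki{m}{r}{j}$ pointwise in $(i,j)$ collapses the inner product to a difference of scalars and yields
\begin{equation*}
\sum_{i,j}\overline{\mat{A}}(i,j)\Bigl(\psixmki{m}{\kappa}{i}-\psixmki{m}{r}{j}\Bigr)
=\sum_i \psixmki{m}{\kappa}{i}\sum_j\overline{\mat{A}}(i,j)
-\sum_j\psixmki{m}{r}{j}\sum_i\overline{\mat{A}}(i,j).
\end{equation*}

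Finally I would recognize the two separated sums as the matrix-vector contractions $\bigl(\psixmk{m}{\kappa}\bigr)\Tr\overline{\mat{A}}\,\barones{r}$ and $\barones{\kappa}\Tr\overline{\mat{A}}\,\psixmk{m}{r}$, which gives the claimed identity. The only subtlety to watch carefully is the bookkeeping: the $5\times 5$ block structure of $\mat{A}$ must correctly align with the $5\times 5$ diagonal blocks of $\matFxm{m}{\qk{\kappa}}{\qk{r}}$, and the contractions with $\ones{r}$ and $\ones{\kappa}$ must be read block-wise so that each block of ones of length $5$ converts $\diag(\fxmsc{m}{\cdot}{\cdot})$ back into the flux vector. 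This is the step where the tensor-with-$\Imat{5}$ hypothesis is essential: without it, the Hadamard and the block structure would not commute cleanly, and the shuffle condition could not be applied entry-by-entry. Once this alignment is made explicit, no additional properties of the SBP operators or metric matrices are needed, since the identity is purely algebraic in $\overline{\mat{A}}$, $\qk{\kappa}$, $\qk{r}$ and the two-point flux.
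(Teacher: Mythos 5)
Your proof is correct: the paper itself defers this result to Theorem~8 / Appendix~D of the cited technical memorandum, and the argument there is the same one you give — unpack the block structure induced by $\otimes\,\Imat{5}$, expand both Hadamard quadratic forms into double sums over node pairs, apply the shuffle condition pointwise to $\left(\wki{\kappa}{(i)}-\wki{r}{(j)}\right)\Tr\fxmsc{m}{\qki{\kappa}{(i)}}{\qki{r}{(j)}}$, and re-collect the row and column sums of $\overline{\mat{A}}$ into the stated contractions with $\barones{r}$ and $\barones{\kappa}$. The only minor observation is that the symmetry hypothesis on the two-point flux is not actually invoked anywhere in your computation (both quadratic forms use the same $(i,j)$ entry of the same matrix), which is harmless here since it is listed as a hypothesis, though it does become essential in the surrounding entropy analysis where the transpose $\matFxm{m}{\qtilde}{\qtilde}\Tr$ appears.
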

Applying \Theorem~\ref{thrm:telescope} to the volume terms on the left-hand side of 
\Eq~\eqref{eq:NSCCS1disc4} yields
\begin{equation}\label{eq:NSCCS1disc5}
\begin{split}
\Jtilde\wtilde\Tr\Mtilde\frac{\mr{d}\qtilde}{\partial t}+&
\frac{1}{2}\sum\limits_{m=1}^{3}\left\{
  \left(\psitildem{m}\right)\Tr\barQtildem{m}\barones{}
  -\barones{}\Tr\barQtildem{m}\psitildem{m}
+\tildeone\Tr\Etildem{m}\circ\matFxm{m}{\qtilde}{\qtilde}\Tr\wtilde
\right\}=\\
&\sum\limits_{l,a=1}^{3}\wtilde\Tr\Etildehatl{l}\matChatla{l}{a}\Dtildehatl{a}\wtilde
-\sum\limits_{l,a=1}^{3}\wtilde\Tr\Dtildehatl{l}\Tr\Mtilde\matChatla{l}{a}\Dtildehatl{a}\wtilde.
\end{split}
\end{equation}
The term $\barQtildem{m}\barones{}$ is zero by the discrete GCL conditions and the consistency 
of the derivative operator ($\mat{D}\bm{1}=\bm{0}$) and for the same reasons 
$\barones{}\Tr\barQtildem{m}=\barones{}\Tr\barEtildem{m}$. Therefore, after 
some rearrangements 
\Eq~\eqref{eq:NSCCS1disc5} reduces to
\begin{equation}\label{eq:NSCCS1disc5}
\begin{split}
\Jtilde\wtilde\Tr\Mtilde\frac{\mr{d}\qtilde}{\partial t}=&
-\frac{1}{2}\sum\limits_{m=1}^{3}\left\{
  -\barones{}\Tr\barEtildem{m}\psitildem{m}
+\tildeone\Tr\Etildem{m}\circ\matFxm{m}{\qtilde}{\qtilde}\Tr\wtilde
\right\}\\
&+\sum\limits_{l,a=1}^{3}\wtilde\Tr\Etildehatl{l}\matChatla{l}{a}\Dtildehatl{a}\wtilde
-\sum\limits_{l,a=1}^{3}\wtilde\Tr\Dtildehatl{l}\Tr\Mtilde\matChatla{l}{a}\Dtildehatl{a}\wtilde.
\end{split}
\end{equation}
The right-hand side of ~\eqref{eq:NSCCS1disc5} contains surface terms 
(those constructed from the $\mat{E}$ matrices) and viscous dissipation terms (the last set of terms).
 The surface terms can be decomposed into the contributions of the separate surfaces 
 of the element (node-wise). The entropy conservation of the algorithm follows immediately for periodic problems 
 because these terms would cancel out with the contributions from the coupling SATs. 
 For general boundary conditions, appropriate SATs need to be constructed so that an 
 entropy inequality or equality is attained (see, for example, 
 \cite{Parsani2015,Svard2018,dalcin_2019_wall_bc}).
\section{Interface dissipation and boundary SATs}\label{sec:dissipation}
In order to render the entropy conservative scheme entropy stable, interface dissipation is added. 
The numerical dissipation added for the inviscid SATs 
(\ie, added to the right-hand side of the discretization) is motivated by a Roe approximate Riemann 
solver (for a detailed discussion see~\cite{Carpenter2014,Parsani2015b,Fernandez2019_p_euler,Fernandez2018_TM}). The 
inviscid dissipation for element $\rmL$ is given as
\begin{equation}\label{eq:dissinIRftoL}
\begin{split}
\dissL\equiv&-\left(\M^{\rmL}\right)^{-1}\RL\Tr\Porthohatl{1}^{\rmL}\left\{
\sum\limits_{f=1}^{4}\IRftoL{f}\left|\frac{\partial\bfnc{F}_{\xilhat{1}}^{I}}{\partial\bfnc{W}}\right|_{\Rf{f}}
\left(\ILtoRf{f}\RL\wL-\RRf{f}\wRf{f}\right)
\right\},
\end{split}
\end{equation}
where 
\begin{equation*}
  \begin{split}
  &\RL\equiv\left(\eNl{1}^{\rmL}\right)\Tr\otimes\Imat{\rmL}\otimes\Imat{\rmL}\otimes\Imat{5},\\
  &\Porthohatl{1}^{\rmL}\equiv\frac{\Deltalk{2}{\rmL}\Deltalk{3}{\rmL}}{4}\PoneD_{\rmL}\otimes\PoneD_{\rmL}\otimes\Imat{5},\\
  &\IRftoL{f}\equiv\IRftoLoneD{f}{2}\otimes\IRftoLoneD{f}{3}\otimes\Imat{5},\\
  &\ILtoRf{f}\equiv\ILtoRfoneD{f}{2}\otimes\ILtoRfoneD{f}{3}\otimes\Imat{5},\\
  &\RRf{f}\equiv\left(\eonel{1}^{\Rf{f}}\right)\Tr\otimes\Imat{\Rf{f}}\otimes\Imat{\Rf{f}}\otimes\Imat{5}.
  \end{split}
\end{equation*}
The inviscid dissipation term for the $\Rf{f}$ element is constructed as 
\begin{equation}\label{eq:dissinILtoRf}
\begin{split}
\dissRf{f}\equiv&-\left(\M^\Rf{f}\right)^{-1}\RRf{f}\Tr\Porthohatl{1}^{\Rf{f}}
\left|\frac{\partial\bfnc{F}_{\xilhat{1}}^{I}}{\partial\bfnc{W}}\right|_{\Rf{f}}
\left(\RRf{f}\wRf{f}-\ILtoRf{f}\RL\wL\right),
\end{split}
\end{equation}
where 
\begin{equation*}
\left|\frac{\partial\bfnc{F}^{I}}{\partial\bfnc{W}}\right|\equiv\mat{Y}\left|\mat{\Lambda}\right|\mat{Y}\Tr.
\end{equation*}
The matrices $\mat{Y}$ and $\mat{\Lambda}$ are block diagonal matrices constructed 
by assembling the point-wise $5\times5$ 
matrices obtained from the decomposition of the Jacobian 
matrix of $\bfnc{F}$ with respect to $\bfnc{W}$ and evaluated at the Roe-averaged of two states. In particular, 
$\left|\frac{\partial\bfnc{F}_{\xilhat{1}}^{I}}{\partial\bfnc{W}}\right|_{\Rf{f}}$ is constructed from the Roe 
averaged states of $\ILtoRf\qL$ and $\qRf{f}$.

Next, a theorem on the accuracy, stability, element-wise conservation, and free-stream preservation
of the inviscid dissipation term is 
presented. 
\begin{thrm}\label{thrm:dissin}
The dissipation terms~\eqref{eq:dissinIRftoL} and~\eqref{eq:dissinILtoRf} are of order 
$\min\left(\pL,\pRf{1},\dots,\pRf{4}\right)+3$, and lead to an entropy stable 
inviscid scheme and have no impact on element-wise conservation or free-stream preservation.
\end{thrm}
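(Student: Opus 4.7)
The plan is to verify the four claims—accuracy, entropy stability, element-wise conservation, and free-stream preservation—separately. The common ingredients are the accuracy of the interpolation operators (Theorems~\ref{thrm:accILtoRf2} and~\ref{thrm:accIRftoL2}), the SBP-preserving identity $\Porthohatl{1}^{\rmL}\IRftoL{f}=\ILtoRf{f}\Tr\Porthohatl{1}^{\Rf{f}}$ obtained by tensoring the one-dimensional relation between $\IRftoLoneD{f}{l}$ and $\ILtoRfoneD{f}{l}$ derived in Section~\ref{sec:cdnon}, and the symmetric positive semi-definite structure of the Roe dissipation matrix $\left|\partial\bfnc{F}^{I}_{\xilhat{1}}/\partial\bfnc{W}\right|_{\Rf{f}}=\Y|\mat{\Lambda}|\Y\Tr$.

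For accuracy, I would observe that the dissipation is driven entirely by the interface jump $\ILtoRf{f}\RL\wL-\RRf{f}\wRf{f}$. By Theorem~\ref{thrm:accILtoRf2}, $\ILtoRf{f}$ reproduces exactly any polynomial of degree at most $\min(\pL,\pRf{f})$, so for a smooth solution this jump is pointwise $O(h^{\min(\pL,\pRf{f})+1})$. A Taylor expansion combined with the scaling of the diagonal-norm LGL volume norm and surface norm then raises the local truncation error of $(\M^{\rmL})^{-1}\RL\Tr\Porthohatl{1}^{\rmL}$ (and its $\Rf{f}$ counterpart) by two additional orders, giving the claimed order $\min(\pL,\pRf{1},\ldots,\pRf{4})+3$. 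Since this exceeds the $p+1$ design truncation of the base scheme, global accuracy is preserved.

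The main obstacle, and the heart of the proof, is entropy stability. I would contract the left-element dissipation with $\wL\Tr\M^{\rmL}$ and each right-element dissipation with $\wRf{f}\Tr\M^{\Rf{f}}$ and sum. The mass matrices cancel against the $(\M^{\rmL})^{-1}$ and $(\M^{\Rf{f}})^{-1}$ prefactors, and the SBP-preserving identity transfers $\Porthohatl{1}^{\rmL}\IRftoL{f}$ to $\ILtoRf{f}\Tr\Porthohatl{1}^{\Rf{f}}$, so that both contributions are expressed against the surface norm of the $\Rf{f}$ element. Collecting terms yields
\begin{equation*}
-\sum_{f=1}^{4}\left(\ILtoRf{f}\RL\wL-\RRf{f}\wRf{f}\right)\Tr
\Porthohatl{1}^{\Rf{f}}\left|\frac{\partial\bfnc{F}_{\xilhat{1}}^{I}}{\partial\bfnc{W}}\right|_{\Rf{f}}
\left(\ILtoRf{f}\RL\wL-\RRf{f}\wRf{f}\right),
\end{equation*}
which is a sum of non-positive quadratic forms because $\Porthohatl{1}^{\Rf{f}}$ is symmetric positive definite by diagonal-norm SBP construction and $\Y|\mat{\Lambda}|\Y\Tr$ is symmetric positive semi-definite by construction of the Roe decomposition. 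Adding this contribution to the entropy-conservative balance derived in Section~\ref{sec:semiNS} produces the required semi-discrete entropy inequality.

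Element-wise conservation and free-stream preservation then follow directly. Testing the sum of dissipation contributions against a constant vectorial field and using the consistency identity $\ILtoRf{f}\RL\tildeone=\RRf{f}\tildeone$ together with the same SBP-preserving relation shows that the left-element contribution exactly cancels the sum of the four right-element contributions, so the dissipation acts only as a conservative numerical surface flux at the interface and contributes nothing to the global integral of the conserved variables. For free-stream preservation, a constant state $\qh$ gives constant entropy variables, whence $\ILtoRf{f}\RL\wL=\RRf{f}\wRf{f}$ by consistency, the jump vanishes, and both $\dissL$ and each $\dissRf{f}$ are identically zero; the free-stream preservation established for the underlying entropy-conservative operator is therefore inherited by the entropy-stable scheme.
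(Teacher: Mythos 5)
The paper does not actually supply a proof of this theorem: it states only that ``the proofs are similar to those in Ref.~\cite{Fernandez2019_p_euler,Fernandez2018_TM} and are omitted for brevity.'' Your write-up therefore cannot be compared line-by-line with the paper, but it does reconstruct the standard argument used in those references, and the three structural claims are handled correctly. In particular, the identity $\Porthohatl{1}^{\rmL}\IRftoL{f}=\ILtoRf{f}\Tr\Porthohatl{1}^{\Rf{f}}$ does follow from tensoring the one-dimensional SBP-preserving relation $\IRftoLoneD{f}{l}=\frac{\Deltalk{l}{\Rf{f}}}{\Deltalk{l}{\rmL}}\left(\PoneD_{\rmL}\right)^{-1}\left(\ILtoRfoneD{f}{l}\right)\Tr\PoneD_{\Rf{f}}$ with the $\Delta$-ratios absorbed into the surface norms, and with it the contraction of $\dissL$ by $\wL\Tr\M^{\rmL}$ and of each $\dissRf{f}$ by $\wRf{f}\Tr\M^{\Rf{f}}$ collapses to the single non-positive quadratic form you display, all referenced to the $\Rf{f}$ surface norms. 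The conservation and free-stream arguments (testing against constants, using $\ILtoRf{f}\RL\tildeone=\RRf{f}\tildeone$ and the vanishing of the jump for a constant state) are likewise correct and are exactly what makes the decoupled-interpolation SAT work.

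Two points deserve tightening. First, in the entropy-stability step you justify non-positivity by saying $\Porthohatl{1}^{\Rf{f}}$ is SPD and $\Y\left|\mat{\Lambda}\right|\Y\Tr$ is SPSD; the product of an SPD and an SPSD matrix is not symmetric or PSD in general, so you need the additional (true) observation that $\Porthohatl{1}^{\Rf{f}}$ is diagonal and acts as a scalar multiple of $\Imat{5}$ on each nodal $5\times5$ block, hence commutes with the block-diagonal Roe matrix and yields a symmetric PSD product. Second, the accuracy claim is the one genuinely under-justified step: you obtain $+1$ from the exactness degree $\min(\pL,\pRf{f})$ of $\ILtoRf{f}$, and then assert that the lifting operator $\left(\M^{\rmL}\right)^{-1}\RL\Tr\Porthohatl{1}^{\rmL}$ ``raises the local truncation error by two additional orders.'' A naive scaling count (volume norm inverse against a surface quadrature) does not obviously produce $+2$, and nothing in your argument pins down why the exponent is $\min(\pL,\pRf{1},\dots,\pRf{4})+3$ rather than $+1$ or $+2$; this is precisely the bookkeeping that the cited references carry out and that would need to be reproduced for a self-contained proof.
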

\begin{proof}
The proofs are similar to those in Ref.~\cite{Fernandez2019_p_euler,Fernandez2018_TM} and 
are omitted for brevity. 
\end{proof}
The viscous interface dissipation terms (interior penalty terms) take the following form:
\begin{equation}\label{eq:IPL}
\IPL\equiv-\left(\M^{\rmL}\right)^{-1}\RL\Tr\Porthohatl{1}^{\rmL}\IRftoL{f}
\matJRftilde{f}^{-1}\CtildeRfij{1}{1}{f}\left(\ILtoRf{f}\RL\wL-\Rf{f}\wRf{f}\right),
\end{equation}
\begin{equation}\label{eq:IPRf}
\IPRf{f}\equiv-\left(\M^{\Rf{f}}\right)^{-1}\RRf{f}\Tr\Porthohatl{1}^{\Rf{f}}
\matJRftilde{f}^{-1}\CtildeRfij{1}{1}{f}\left(\RRf{f}\wRf{f}-\ILtoRf{f}\RL\wL\right),
\end{equation}
where 
\begin{equation*}
\CtildeRfij{1}{1}{f}\equiv\frac{1}{2}\left\{\Chatij{1}{1}
\left(\ILtoRf{f}\RL\qL\right)+\Chatij{1}{1}\left(\RRf{f}\qRf{f}\right)\right\},
\end{equation*}
and the diagonal matrix $\matJRftilde{f}$ has the metric Jacobian associated with surface of element $\Rf{f}$ 
along its diagonal. The next theorem summarizes the properties of the viscous dissipation terms. 
\begin{thrm}
The dissipation terms~\eqref{eq:IPL} and~\eqref{eq:IPRf} are of order 
$\min\left(\pL,\pRf{1},\dots,\pRf{f}\right)+3$, lead to an entropy stable 
viscous scheme and have no impact on free-stream preservation.
\end{thrm}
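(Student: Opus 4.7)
The plan is to mirror the structure of the proof for Theorem~\ref{thrm:dissin} (the inviscid dissipation theorem) and exploit the fact that the viscous coupling in~\eqref{eq:IPL}--\eqref{eq:IPRf} has exactly the same algebraic shape as the inviscid coupling, the only change being that the block $\left|\partial\bfnc{F}_{\xilhat{1}}^{I}/\partial\bfnc{W}\right|_{\Rf{f}}$ is replaced by the symmetric positive semi-definite block $\matJRftilde{f}^{-1}\CtildeRfij{1}{1}{f}$. Since the abstract structure is preserved, the three properties can be established in the same order as before: accuracy, entropy stability, and free-stream preservation.

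First I would treat accuracy. The argument reduces to bounding the jump $\ILtoRf{f}\RL\wL-\RRf{f}\wRf{f}$ across the nonconforming face. By Theorem~\ref{thrm:accILtoRf2} the operator $\ILtoRf{f}$ is exact for polynomials of degree $\min(\pL,\pRf{f})$, so for smooth solutions this jump is $\fnc{O}(h^{\min(\pL,\pRf{f})+1})$ on the interface. The prefactor carries an additional $h$ from $\Porthohatl{1}^{\rmL}$ (a surface mass matrix scaling like the face area) and another $h$ from the inverse volume norm $(\M^{\rmL})^{-1}$, which together with the truncation estimate yields the claimed order of $\min(\pL,\pRf{1},\dots,\pRf{4})+3$ for the penalty; this is identical to the accounting used for the inviscid dissipation and for the analogous $p$-only case in~\cite{Fernandez2019_p_ns,Fernandez2018_TM}.

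Next I would establish entropy stability. The strategy is to append the viscous penalties to the semi-discrete entropy identity~\eqref{eq:NSCCS1disc5}, contract the right-hand side by $\wtilde\Tr\Mtilde$, and sum the contributions from the $\rmL$-side and the four $\Rf{f}$-sides. Because the $\RL\Tr\Porthohatl{1}^{\rmL}\IRftoL{f}$ factor appearing on the $\rmL$-side is, by the SBP-preserving relation between $\IRftoLoneD{f}{l}$ and $\ILtoRfoneD{f}{l}$, the transpose of the $\ILtoRf{f}\RL$ factor appearing on the $\Rf{f}$-side (up to the surface norm $\Porthohatl{1}^{\Rf{f}}$), the five contributions combine into a single quadratic form
\begin{equation*}
-\sum_{f=1}^{4}\left(\ILtoRf{f}\RL\wL-\RRf{f}\wRf{f}\right)\Tr\Porthohatl{1}^{\Rf{f}}\matJRftilde{f}^{-1}\CtildeRfij{1}{1}{f}\left(\ILtoRf{f}\RL\wL-\RRf{f}\wRf{f}\right).
\end{equation*}
Since $\Porthohatl{1}^{\Rf{f}}$ is SPD, $\matJRftilde{f}$ has strictly positive diagonal, and $\CtildeRfij{1}{1}{f}$ is symmetric positive semi-definite by construction (it is the arithmetic average of two evaluations of $\Chatij{1}{1}$, each of which inherits semi-definiteness from $\hat{\mat{C}}$ in~\eqref{eq:NSCCSW4}), this quadratic form is non-positive, so the penalty can only decrease the discrete entropy.

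Finally, free-stream preservation is immediate. A uniform free-stream state has $\wL$ and $\wRf{f}$ equal to the same constant vector at every node; since $\ILtoRf{f}$ reproduces constants (degree at least zero by Theorem~\ref{thrm:accILtoRf2}), the jump $\ILtoRf{f}\RL\wL-\RRf{f}\wRf{f}$ vanishes identically, so both $\IPL$ and $\IPRf{f}$ are zero and the penalties do not pollute the GCL-compatible base scheme established in Section~\ref{sec:hplin}. The only real subtlety in this argument is the transpose pairing used in the entropy step, which relies on the SBP-preserving identity between $\IRftoLoneD{f}{l}$ and $\ILtoRfoneD{f}{l}$; once that identification is made explicitly, the rest is bookkeeping in the spirit of Theorem~\ref{thrm:dissin}, so I would present it as a brief remark and defer the detailed manipulations to the analogous proofs in~\cite{Fernandez2019_p_euler,Fernandez2018_TM}.
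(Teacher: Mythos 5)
The paper itself gives no proof of this theorem---it defers entirely to the analogous arguments in \cite{Fernandez2019_p_ns}---and your sketch follows exactly the route those references take, so structurally you are on the intended path. The entropy-stability step is correct and is the real content: the identity $\Porthohatl{1}^{\rmL}\IRftoL{f}=\left(\ILtoRf{f}\right)\Tr\Porthohatl{1}^{\Rf{f}}$, which follows from the SBP-preserving relation between $\IRftoLoneD{f}{l}$ and $\ILtoRfoneD{f}{l}$ exactly as you say, collapses the $\rmL$- and $\Rf{f}$-side contributions into the single quadratic form you display, and that form is non-positive. One small point you gloss over: the product of an SPD matrix with a symmetric PSD matrix is not in general symmetric or sign-definite; it works here only because $\Porthohatl{1}^{\Rf{f}}\matJRftilde{f}^{-1}$ is a positive scalar multiple of $\Imat{5}$ on each nodal block and therefore commutes with the block-diagonal $\CtildeRfij{1}{1}{f}$, whose $5\times5$ blocks are PSD as principal submatrices of $\hat{\mat{C}}$. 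The free-stream argument is also fine: constants lie in the range of exactness of $\ILtoRf{f}$, so the penalized jump vanishes identically for a uniform state.

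The genuine gap is in the accuracy accounting. You claim one extra power of $h$ from $\Porthohatl{1}^{\rmL}$ and one from $\left(\M^{\rmL}\right)^{-1}$. In three dimensions $\Porthohatl{1}^{\rmL}$ is a face quadrature and scales like the face area, i.e.\ like $h^{2}$, while $\left(\M^{\rmL}\right)^{-1}$ scales like the inverse cell volume, i.e.\ like $h^{-3}$; the prefactor $\left(\M^{\rmL}\right)^{-1}\RL\Tr\Porthohatl{1}^{\rmL}$ therefore scales like $h^{-1}$, not $h^{2}$, so the bookkeeping as written does not produce the exponent $\min\left(\pL,\pRf{1},\dots,\pRf{4}\right)+3$. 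You have also not accounted for the scaling of $\matJRftilde{f}^{-1}\CtildeRfij{1}{1}{f}$, which carries metric factors of its own. The correct derivation (the one in the cited reference) tracks separately the interpolation error of the jump in the entropy variables, the surface metric and quadrature weights, and the viscous coefficient blocks; as it stands your accuracy paragraph reverse-engineers the stated exponent rather than deriving it, and this is the one place where the argument would need to be redone before it could stand in for the omitted proof.
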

\begin{proof}
The proofs are similar to those in Ref.~\cite{Fernandez2019_p_ns} and are omitted for brevity.
\end{proof}

In Section~\ref{sec:num}, four problems are used to characterize the $h/p$ nonconforming algorithm: 
1) the propagation of an isentropic vortex, 2) 
the propagation of a viscous shock, 3) the Taylor--Green vortex problem, and 4) the turbulent flow 
past a sphere. In all cases, the boundary conditions are weakly imposed by using the same type of mechanics 
as for the interface SATs discussed in this section (for details see~\cite{Parsani2015,dalcin_2019_wall_bc}).
\section{Numerical experiments}\label{sec:num}
In this section, we verify that the proposed $h/p$-algorithm retains the accuracy
and robustness of the conforming algorithm \cite{Carpenter2014,Parsani2015,Carpenter2016}.

The unstructured grid $h/p$-adaptive solver used herein has been developed at the Extreme Computing Research Center (ECRC) at KAUST on 
top of the Portable and Extensible Toolkit for Scientific computing (PETSc)~\cite{petsc-user-ref}, its mesh topology 
abstraction (DMPLEX)~\cite{KnepleyKarpeev09} and salable ordinary differential equation (ODE)/differential algebraic equations (DAE) solver library~\cite{abhyankar2018petsc}. The $p$-refinement algorithm is fully implemented 
in the unstructured solver whereas the $h$-refinement strategy leverages the 
capabilities of the p4est library \cite{BursteddeWilcoxGhattas11}.
Additionally, the conforming 
numerical solver is based on the algorithms proposed in \cite{Carpenter2014,Parsani2015,Carpenter2016}.
The systems of ODEs arising from the spatial
discretizations are integrated using the fourth-order
accurate Dormand--Prince method \cite{dormand_rk_1980} endowed with an adaptive time stepping technique based on digital signal processing \cite{Soderlind2003,Soderlind2006}. To make the temporal error negligible, a tolerance of $10^{-8}$ is always used for the time-step adaptivity.

The errors are computed using volume scaled (for the $L^{1}$ and $L^{2}$ norms) discrete norms as follows:
\begin{equation*}
\begin{split}
  &\|\bm{u}\|_{L^{1}}=\Omega_{c}^{-1}\sum\limits_{\kappa=1}^{K}\ones{\kappa}\Tr\M^{\kappa}\matJk{\kappa}\textrm{abs}\left(\bm{u}_{\kappa}\right),\\
  &\|\bm{u}\|_{L^{2}}^{2}=\Omega_{c}^{-1}\sum\limits_{\kappa=1}^{K}\bm{u}_{\kappa}\M^{\kappa}\matJk{\kappa}\bm{u}_{\kappa},\\
&\|\bm{u}\|_{L^{\infty}}=\max\limits_{\kappa=1\dots K}\textrm{abs}\left(\bm{u}_{\kappa}\right),
\end{split}
\end{equation*}
where $\Omega_{c}$ is the volume of $\Omega$ computed as 
$\Omega_{c}\equiv\sum\limits_{\kappa=1}^{K}\ones{\kappa}\Tr\M^{\kappa}\matJk{\kappa}\ones{\kappa}$.

\subsection{Isentropic Euler vortex propagation}\label{subsec:iv}
For verification and characterization of the inviscid components of the algorithm, 
the propagation of an isentropic vortex is used. This benchmark problem
has an analytical solution which is given by
\begin{equation*}
\begin{split}
& \fnc{G}\left(\xm{1},\xm{2},\xm{3},t\right) = 1
-\left\{
\left[
\left(\xm{1}-x_{1,0}\right)
-U_{\infty}\cos\left(\alpha\right)t
\right]^{2}
+
\left[
\left(\xm{2}-x_{2,0}\right)
-U_{\infty}\sin\left(\alpha\right)t
\right]^{2}
\right\},\\
&\rho = T^{\frac{1}{\gamma-1}},\\
  &\Um{1} = U_{\infty}\cos(\alpha)-\epsilon_{\nu}
\frac{\left(\xm{2}-x_{2,0}\right)-U_{\infty}\sin\left(\alpha\right)t}{2\pi}
\exp\left(\frac{\fnc{G}}{2}\right),\\
  &\Um{2} = U_{\infty}\sin(\alpha)-\epsilon_{\nu}
\frac{\left(\xm{1}-x_{1,0}\right)-U_{\infty}\cos\left(\alpha\right)t}{2\pi}
\exp\left(\frac{\fnc{G}}{2}\right),\\
  &\Um{3} = 0,\\
&T = \left[1-\epsilon_{\nu}^{2}M_{\infty}^{2}\frac{\gamma-1}{8\pi^{2}}\exp\left(\fnc{G}\right)\right],
\end{split}
\end{equation*}
where $U_{\infty}$, $M_{\infty}$, and $\left(x_{1,0},x_{2,0},x_{3,0}\right)$ are the modulus of the 
free-stream velocity, the free-stream Mach number, and the vortex center, respectively. In this paper, 
the following values are used: $U_{\infty}=M_{\infty} c_{\infty}$, $\epsilon_{\nu}=5$, $M_{\infty}=0.5$, 
$\gamma=1.4$, $\alpha=45^{\degree}$, and $\left(x_{1,0},x_{2,0},x_{3,0}\right)=\left(0,0,0\right)$.
The computational domain is
\[
  \xm{1}\in[-5,5], \qquad\xm{2}\in[-5,5],\qquad\xm{3}\in[-5,5],\qquad  t\in[0,2].
\]
The analytical solution is used to furnish data for the initial condition.

First, we report on the results aimed at validating the entropy conservation properties of 
the interior domain SBP-SAT algorithm. Thus, periodic boundary conditions are used on all six faces of the computational domain.
Furthermore, all the dissipation terms used for the 
interface coupling are turned off.
The discrete integral over the volume of the time rate of change of the entropy function,
$\displaystyle\int_{\Omega}\frac{\partial\fnc{S}}{\partial t}\mr{d}\Omega$, is monitored at every time step. This
means that at each time step the compressible Euler equations are multiplied by
the discrete entropy variables to construct the discrete analog of the right-hand side of
\eqref{eq:NSCCSW5}.

We subdivide the computational
domain using ten hexahedrons in each coordinate direction. Subsequently,
we split random cells in the mesh
using one or two levels of $h$-refinement. Then, we assign the solution polynomial degree in 
each element to a random integer chosen uniformly from the set $\{2,3,4,5\}$ (\ie, each member in the set has 
an equal probability of being chosen). To test the conservation of entropy and therefore the free-stream condition when
curved element interfaces are used we construct the the LGL collocation point coordinates 
at element interfaces\footnote{In a general setting, element interfaces can also be boundary element interfaces.} as follows:

\begin{itemize}
\item Construct a mesh using a $p_i$th-order polynomial
  approximation for the element interfaces.
\item Perturb the nodes that are used to define the $p_i$th-order polynomial
  approximation of the element interfaces as follows:
\[
\begin{split}
&x_1 = x_{1,*} + \frac{1}{15} L_1 \cos \left(   a \right) \cos \left( 3 b \right) \sin \left( 4 c \right),\\
&x_2 = x_{2,*} + \frac{1}{15} L_2 \sin \left( 4 a \right) \cos \left(   b \right) \cos \left( 3 c \right),\\
&x_3 = x_{3,*} + \frac{1}{15} L_3 \cos \left( 3 a \right) \sin \left( 4 b \right) \cos \left(   c \right),
\end{split}
\]
where,
\[
\begin{split}
&a = \frac{\pi}{L_1} \left(x_{1,*}-\frac{x_{1,H}+x_{1,L}}{2}\right), \quad
b = \frac{\pi}{L_2} \left(x_{2,*}-\frac{x_{2,H}+x_{2,L}}{2}\right), \\
&c = \frac{\pi}{L_3} \left(x_{3,*}-\frac{x_{3,H}+x_{3,L}}{2}\right).
\end{split}
\]
The symbols $L_1$, $L_2$ and $L_3$ represent the dimensions of the computational domain in the three
coordinate directions and the sub-script $*$ indicates the unperturbed coordinate of
the nodes. This step yields a perturbed $p_i$th-order polynomial.
\item Compute the coordinate of the LGL points at the element interface
by evaluating the perturbed $p_i$th-order polynomial at the LGL points used 
to define the cell solution polynomial of order $p_s$.
\end{itemize}
Herein, we use $p_i=2$.
Figure \ref{fig:iv_mesh_cut} shows a cut of the mesh
where each cell is colored according to the solution polynomial degree assigned to it.
Curved element interfaces are clearly visible. 

The propagation of the vortex is simulated for two time units.
\begin{figure}
     \centering
     \begin{subfigure}[b]{0.40\textwidth}
         \centering
         \includegraphics[width=\textwidth]{./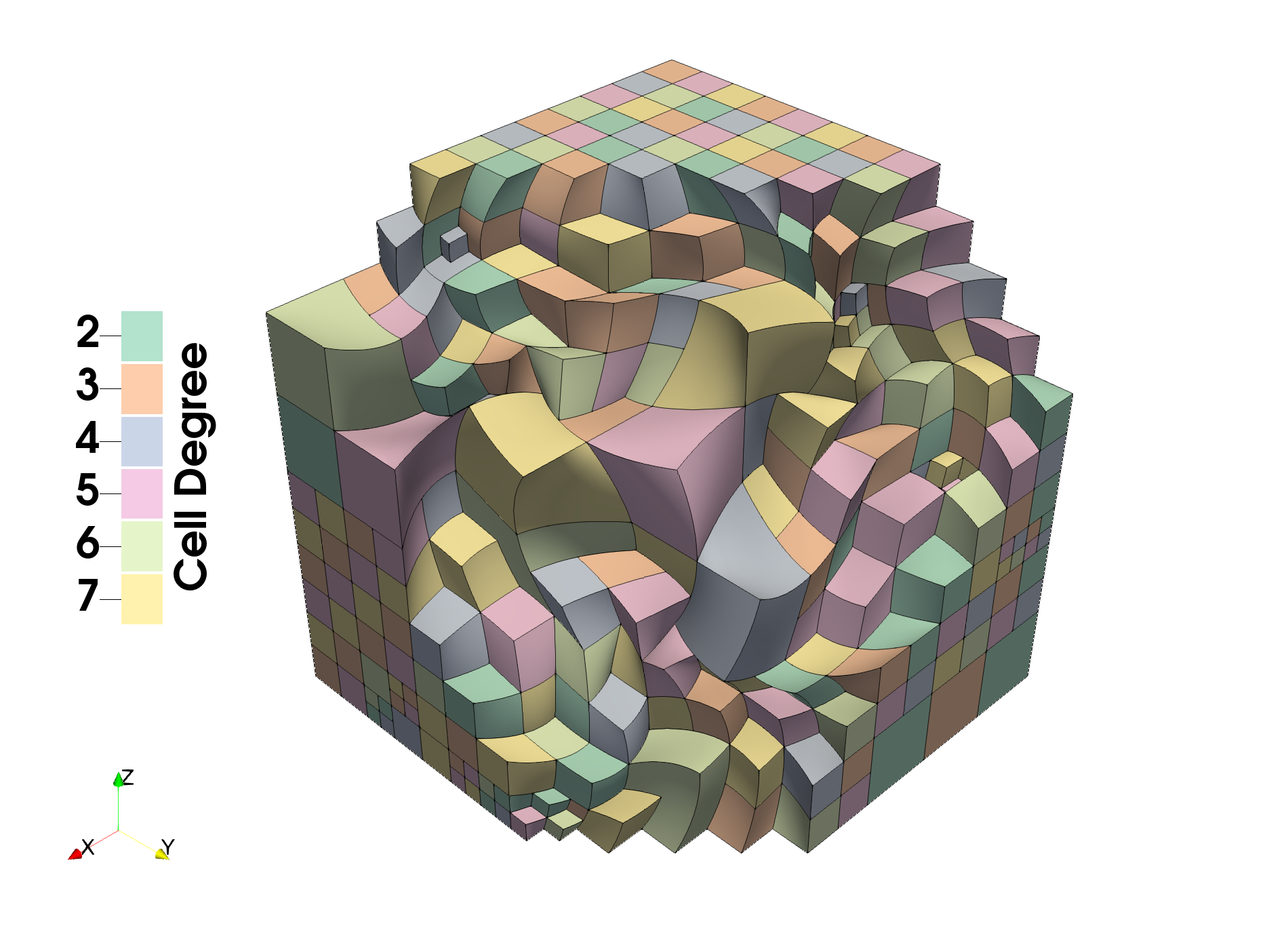}
         \caption{Polynomial degree distribution.}
         \label{fig:iv_mesh_cut}
     \end{subfigure}
     \hfill
     \begin{subfigure}[b]{0.55\textwidth}
         \centering
         \includegraphics[width=\textwidth]{./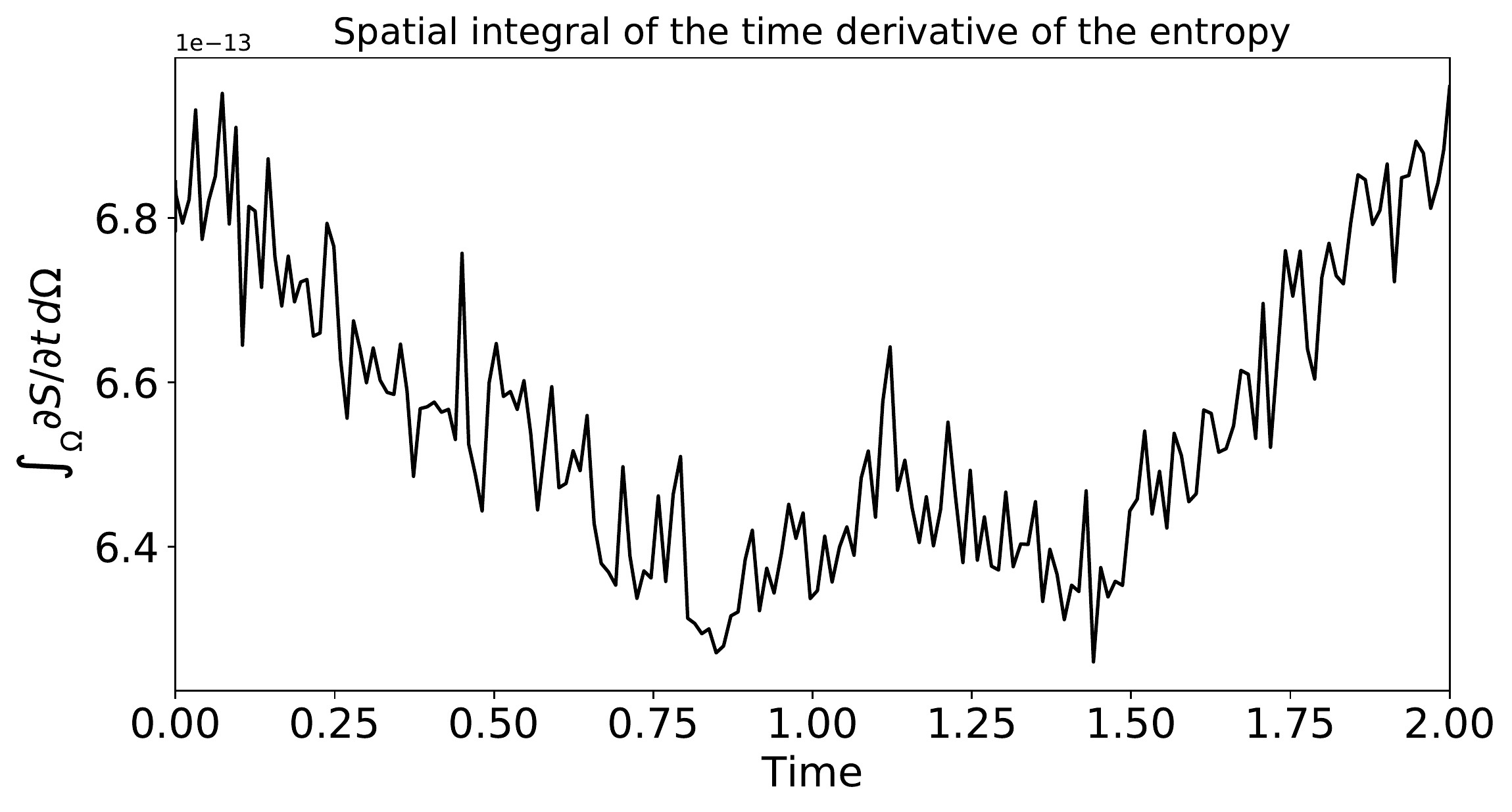}
         \caption{Time rate of change of the entropy function.}
         \label{fig:iv_dsdt}
     \end{subfigure}
        \caption{Isentropic Euler vortex.}
        \label{fig:iv_conservation}
\end{figure}
Figure \ref{fig:iv_dsdt} plots the integral over the volume of the time derivative of the entropy function. We can
see that the global variation of the discrete time rate of change of $\fnc{S}$ is practically
zero (i.e., machine double precision). This implies that the nonconforming algorithm is entropy conservative.

Second, we perform a grid convergence study to investigate the order of convergence of the $h/p$-adaptive
approach. The base grid (labeled with ``0'' in the first column of following tables) is constructed as follow:
\begin{itemize}
\item Divide the computational domain with four hexahedral elements in each coordinate direction.
\item Refine random elements by using one or two levels of $h$-refinement.
\item Assign the solution polynomial degree in each element to a random integer chosen uniformly from the set $\{p_s, p_s+1\}$.
\item Approximate the curved element interfaces with a $p_s$th-order accurate polynomial.
\item Construct the perturbed elements and their corresponding LGL points as described previously. 
\end{itemize}
From the base grid, which is similar to the one depicted in Figure \ref{fig:iv_mesh_cut}, 
a sequence of nested grids is then generated to perform the convergence study.
The results are reported in Tables
\ref{tab:iv_p1p2} through \ref{tab:iv_p4p5}
for the error on the density. The number listed in the first column denoted by 
``Levels'' indicates the number of uniform refinements in each coordinate direction.

\floatsetup[table]{font=footnotesize}
\begin{table}[htbp]
\vspace{0.5cm}
\begin{center}
\begin{tabular}{||l||c|c|c|c|c|c||c|c|c|c|c|c||}
\hline \hline
        & \multicolumn{6}{c||}{Conforming, $p=1$} & \multicolumn{6}{c||}{Nonconforming, $p=1$ and $p=2$}\\ \hline
 Levels & $L^1$    & Rate  & $L^2$    & Rate  & $L^{\infty}$ & Rate  & $L^1$    & Rate  & $L^2$    & Rate  & $L^{\infty}$ & Rate  \\ \hline
 0      & 2.74E-02 & -     & 1.32E-03 & -     &  1.55E-01    & -     & 1.02E-02 & -     & 5.80E-04 & -     & 1.43E-01     & -     \\ \hline
 1      & 1.14E-02 & -1.26 & 6.61E-04 & -1.00 &  1.12E-01    & -0.47 & 4.38E-03 & -1.22 & 2.82E-04 & -1.04 & 7.17E-02     & -1.00 \\ \hline
 2      & 5.13E-03 & -1.16 & 3.31E-04 & -1.00 &  7.29E-02    & -0.62 & 1.45E-03 & -1.60 & 9.99E-05 & -1.50 & 4.30E-02     & -0.74 \\ \hline
 3      & 1.70E-03 & -1.59 & 1.15E-04 & -1.52 &  3.01E-02    & -1.28 & 4.16E-04 & -1.80 & 3.02E-05 & -1.72 & 2.29E-02     & -0.91 \\ \hline
 4      & 4.76E-04 & -1.84 & 3.24E-05 & -1.83 &  8.53E-03    & -1.82 & 1.11E-04 & -1.91 & 8.76E-06 & -1.79 & 1.06E-02     & -1.10 \\ \hline
 5      & 1.23E-04 & -1.96 & 8.33E-06 & -1.96 &  2.13E-03    & -2.00 & 2.61E-05 & -2.09 & 2.28E-06 & -1.94 & 4.05E-03     & -1.40 \\ \hline
 6      & 3.08E-05 & -1.99 & 2.09E-06 & -1.99 &  5.24E-04    & -2.02 & 6.31E-06 & -2.05 & 5.75E-07 & -1.99 & 1.25E-03     & -1.70 \\ \hline
 7      & 7.68E-06 & -2.00 & 5.23E-07 & -2.00 &  1.30E-04    & -2.01 & 1.56E-06 & -2.02 & 1.50E-07 & -1.94 & 4.45E-04     & -1.48 \\ \hline
\end{tabular}
\caption{Convergence study of the isentropic vortex propagation: two levels of $h$-refinement, $p=1$ and $p=2$; density error.}.
\label{tab:iv_p1p2}
\end{center}
\end{table}

\floatsetup[table]{font=footnotesize}
\begin{table}[htbp]
\vspace{0.5cm}
\begin{center}
\begin{tabular}{||l||c|c|c|c|c|c||c|c|c|c|c|c||}
\hline \hline
      & \multicolumn{6}{c||}{Conforming, $p=2$} & \multicolumn{6}{c||}{Nonconforming, $p=2$ and $p=3$}\\ \hline
 Levels & $L^1$    & Rate  & $L^2$    & Rate  & $L^{\infty}$ & Rate  & $L^1$    & Rate  & $L^2$    & Rate  & $L^{\infty}$ & Rate  \\ \hline
 0      & 9.75E-03 & -     & 5.32E-04 & -     & 1.24E-01     & -     & 2.59E-03 & -     & 1.75E-04 & -     & 7.52E-02     & -     \\ \hline
 1      & 3.18E-03 & -1.61 & 2.09E-04 & -1.35 & 6.97E-02     & -0.83 & 4.11E-04 & -2.65 & 3.45E-05 & -2.34 & 3.14E-02     & -1.26 \\ \hline
 2      & 5.18E-04 & -2.62 & 3.88E-05 & -2.43 & 2.51E-02     & -1.47 & 4.91E-05 & -3.07 & 5.13E-06 & -2.75 & 8.16E-03     & -1.95 \\ \hline
 3      & 6.38E-05 & -3.02 & 5.50E-06 & -2.82 & 7.23E-03     & -1.79 & 5.86E-06 & -3.07 & 6.74E-07 & -2.93 & 2.09E-03     & -1.97 \\ \hline
 4      & 7.61E-06 & -3.07 & 7.23E-07 & -2.93 & 1.21E-03     & -2.58 & 7.05E-07 & -3.06 & 8.97E-08 & -2.91 & 4.32E-04     & -2.27 \\ \hline
 5      & 9.48E-07 & -3.00 & 9.95E-08 & -2.86 & 2.75E-04     & -2.14 & 8.54E-08 & -3.04 & 1.20E-08 & -2.91 & 1.00E-04     & -2.11 \\ \hline
 6      & 1.23E-07 & -2.94 & 1.43E-08 & -2.83 & 3.41E-05     & -3.01 & 9.98E-09 & -3.10 & 1.56E-09 & -2.94 & 2.52E-05     & -1.99 \\ \hline
\end{tabular}
  \caption{Convergence study of the isentropic vortex propagation: two levels of $h$-refinement, $p=2$ and $p=3$; density error.}.
\label{tab:iv_p2p3}
\end{center}
\end{table}

\floatsetup[table]{font=footnotesize}
\begin{table}[htbp]
\vspace{0.5cm}
\begin{center}
\begin{tabular}{||l||c|c|c|c|c|c||c|c|c|c|c|c||}
\hline \hline
      & \multicolumn{6}{c||}{Conforming, $p=3$} & \multicolumn{6}{c||}{Nonconforming, $p=3$ and $p=4$}\\ \hline
 Levels & $L^1$    & Rate  & $L^2$    & Rate  & $L^{\infty}$ & Rate  & $L^1$    & Rate  & $L^2$    & Rate  & $L^{\infty}$ & Rate  \\ \hline
 0      & 5.22E-03 & -     & 3.38E-04 & -     & 9.16E-02     & -     & 5.38E-04 & -     & 4.51E-05 & -     & 5.38E-02     & -     \\ \hline
 1      & 6.84E-04 & -2.93 & 5.30E-05 & -2.67 & 4.71E-02     & -0.96 & 4.18E-05 & -3.69 & 3.89E-06 & -3.54 & 5.42E-03     & -3.31 \\ \hline
 2      & 5.50E-05 & -3.64 & 4.54E-06 & -3.55 & 6.61E-03     & -2.83 & 2.61E-06 & -4.00 & 2.82E-07 & -3.78 & 6.47E-04     & -3.07 \\ \hline
 3      & 3.48E-06 & -3.98 & 3.33E-07 & -3.77 & 5.47E-04     & -3.59 & 1.79E-07 & -3.86 & 1.92E-08 & -3.88 & 7.36E-05     & -3.14 \\ \hline
 4      & 2.10E-07 & -4.05 & 2.45E-08 & -3.76 & 4.93E-05     & -3.47 & 1.09E-08 & -4.04 & 1.23E-09 & -3.96 & 6.93E-06     & -3.41 \\ \hline
 5      & 1.39E-08 & -3.92 & 1.87E-09 & -3.71 & 6.32E-06     & -2.96 & 7.05E-10 & -3.96 & 8.10E-11 & -3.93 & 8.10E-07     & -3.10 \\ \hline
\end{tabular}
  \caption{Convergence study of the isentropic vortex propagation: two levels of $h$-refinement, $p=3$ and $p=4$; density error.}.
\label{tab:iv_p3p4}
\end{center}
\end{table}

\floatsetup[table]{font=footnotesize}
\begin{table}[htbp]
\vspace{0.5cm}
\begin{center}
\begin{tabular}{||l||c|c|c|c|c|c||c|c|c|c|c|c||}
\hline \hline
      & \multicolumn{6}{c||}{Conforming, $p=4$} & \multicolumn{6}{c||}{Nonconforming, $p=4$ and $p=5$}\\ \hline
 Levels & $L^1$    & Rate  & $L^2$    & Rate  & $L^{\infty}$ & Rate  & $L^1$    & Rate  & $L^2$    & Rate  & $L^{\infty}$ & Rate  \\ \hline
 0      & 2.34E-03 & -     & 1.56E-04 & -     & 9.92E-02     & -     & 7.48E-05 & -     & 5.15E-06 & -     & 4.21E-03     & -     \\ \hline
 1      & 1.70E-04 & -3.78 & 1.43E-05 & -3.45 & 2.32E-02     & -2.09 & 2.20E-06 & -5.09 & 1.87E-07 & -4.79 & 2.45E-04     & -4.10 \\ \hline
 2      & 6.07E-06 & -4.81 & 6.41E-07 & -4.48 & 1.27E-03     & -4.19 & 6.76E-08 & -5.02 & 6.66E-09 & -4.81 & 1.81E-05     & -3.76 \\ \hline
 3      & 1.99E-07 & -4.93 & 2.25E-08 & -4.83 & 5.13E-05     & -4.64 & 2.08E-09 & -5.03 & 2.21E-10 & -4.91 & 1.26E-06     & -3.84 \\ \hline
 4      & 7.11E-09 & -4.81 & 8.60E-10 & -4.71 & 3.01E-06     & -4.09 & 6.61E-11 & -4.97 & 6.78E-12 & -5.03 & 9.13E-08     & -3.79 \\ \hline
\end{tabular}
  \caption{Convergence study of the isentropic vortex propagation: two levels of $h$-refinement, $p=4$ and $p=5$; density error.}.
\label{tab:iv_p4p5}
\end{center}
\end{table}

For all the degree tested (i.e. $p = 1$ to $p = 5$), the order of convergence of the conforming and 
nonconforming algorithms is very similar. However, note that in the $L^1$ and $L^2$ norms the nonconforming algorithm is
more accurate than the conforming one. In the discrete $L^{\infty}$ norm, the nonconforming scheme is sometimes slightly
worse than the conforming scheme; this most likely results from the fact that the interpolation 
matrices are sub-optimal.

\subsection{Viscous shock propagation}
Next we study the propagation of a viscous shock using the
compressible Navier--Stokes equations.  We assume a planar shock propagating along the $\xm{1}$
coordinate direction with a Prandtl number of $Pr=3/4$.
The exact solution of this problem is known;
the momentum $\fnc{V}(x_1)$ satisfies the ODE
\begin{equation}
\begin{split}
&\alpha\fnc{V}\frac{\partial\fnc{V}}{\partial\xm{1}}-(\fnc{V}-1)(\fnc{V}-\fnc{V}_{f})=0;\;-\infty\leq\xm{1}\leq+\infty,\; t\ge 0,\\
\end{split}
\end{equation}
whose solution can be written implicitly as
{\small
\begin{equation}\label{eq:implicit_sol_vs}
  \begin{split}
&\xm{1}-\frac{1}{2}\alpha\left(\log\left|(\fnc{V}(x_1)-1)(\fnc{V}(x_1)-\fnc{V}_{f})\right|\right.\\
&\left.+\frac{1+\fnc{V}_{f}}{1-\fnc{V}_{f}}\log\left|\frac{\fnc{V}(x_1)-1}{\fnc{V}(x_1)-\fnc{V}_{f}}\right|\right) = 0,
  \end{split}
\end{equation}
}
where
\begin{equation}
\fnc{V}_{f}\equiv\frac{\fnc{U}_{L}}{\fnc{U}_{R}},\qquad
\alpha\equiv\frac{2\gamma}{\gamma + 1}\frac{\,\mu}{Pr\dot{\fnc{M}}}.
\end{equation}
Here $\fnc{U}_{L/R}$ are known velocities to the left and right of the shock at
$-\infty$ and $+\infty$, respectively, $\dot{\fnc{M}}$ is the constant mass
flow across the shock, $Pr$ is the Prandtl number, and $\mu$ is the dynamic
viscosity.
The mass and total enthalpy are constant across the shock. Moreover,
the momentum and energy equations become redundant.

For our tests, $\fnc{V}$ is computed from Equation \eqref{eq:implicit_sol_vs}
to machine precision using bisection.
The moving shock solution is obtained by applying a uniform translation to the above solution.
The shock is located at the center of the domain at $t=0$ and the following
values are used: $M_{\infty}=2.5$, $Re_{\infty}=10$, and $\gamma=1.4$.
The domain is given by
\[
	\xm{1}\in[-0.5,0.5], \quad \xm{2}\in[-0.5,0.5], \quad \xm{3}\in[-0.5,0.5], \quad t\in[0,0.5].
\]
The boundary conditions are prescribed by penalizing the numerical solution
against the exact solution. The analytical solution is also used to furnish data
for the initial condition.

The base grid (labeled with ``0'' in the first column of Tables
\ref{tab:vs_p1p2} through \ref{tab:vs_p4p5}) is constructed as 
as described in Section \ref{subsec:iv}. From the base grid, which is similar to the one depicted in Figure \ref{fig:iv_mesh_cut}, 
a sequence of nested grids is then generated to perform the convergence study.
The results are reported in Tables
\ref{tab:vs_p1p2} through \ref{tab:vs_p4p5}
for the error on the density. Again, the number listed in the first column denoted by 
``Levels'' indicates the number of uniform refinement in each coordinate direction.

Similar to the propagation of the inviscid vortex, for all the degree tested (i.e., $p = 1$ to $p = 5$), the order of
convergence of the conforming and nonconforming algorithms is similar. However, note that,
the nonconforming algorithm is more accurate than the conforming one, for all the three
norms reported. 

\floatsetup[table]{font=footnotesize}
\begin{table}[htbp]
\vspace{0.5cm}
\begin{center}
\begin{tabular}{||l||c|c|c|c|c|c||c|c|c|c|c|c||}
\hline \hline
        & \multicolumn{6}{c||}{Conforming, $p=1$} & \multicolumn{6}{c||}{Nonconforming, $p=1$ and $p=2$}\\ \hline
 Levels & $L^1$    & Rate  & $L^2$    & Rate  & $L^{\infty}$ & Rate  & $L^1$    & Rate  & $L^2$    & Rate  & $L^{\infty}$ & Rate  \\ \hline
 0      & 5.43E-02 & -     & 6.54E-02 & -     & 1.40E-01   & -       & 1.31E-02 & -     & 2.11E-02 & -     & 7.32E-02     & -     \\ \hline
 1      & 2.04E-02 & -1.41 & 2.92E-02 & -1.16 & 8.42E-02   & -0.73   & 3.29E-03 & -1.99 & 5.76E-03 & -1.87 & 2.94E-02     & -1.32 \\ \hline
 2      & 5.56E-03 & -1.87 & 8.45E-03 & -1.79 & 2.85E-02   & -1.57   & 8.39E-04 & -1.97 & 1.46E-03 & -1.98 & 9.16E-03     & -1.68 \\ \hline
 3      & 1.44E-03 & -1.94 & 2.23E-03 & -1.92 & 8.12E-03   & -1.81   & 2.11E-04 & -1.99 & 3.76E-04 & -1.96 & 2.36E-03     & -1.96 \\ \hline
 4      & 3.68E-04 & -1.97 & 5.66E-04 & -1.98 & 2.26E-03   & -1.84   & 5.03E-05 & -2.07 & 8.97E-05 & -2.07 & 5.97E-04     & -1.98 \\ \hline
 5      & 9.28E-05 & -1.99 & 1.43E-04 & -1.99 & 6.05E-04   & -1.90   & 1.24E-05 & -2.02 & 2.10E-05 & -2.09 & 1.48E-04     & -2.01 \\ \hline
\end{tabular}
\caption{Convergence study of the viscous shock propagation: two levels of $h$-refinement, $p=1$ and $p=2$; density error.}.
\label{tab:vs_p1p2}
\end{center}
\end{table}

\floatsetup[table]{font=footnotesize}
\begin{table}[htbp]
\vspace{0.5cm}
\begin{center}
\begin{tabular}{||l||c|c|c|c|c|c||c|c|c|c|c|c||}
\hline \hline
      & \multicolumn{6}{c||}{Conforming, $p=2$} & \multicolumn{6}{c||}{Nonconforming, $p=2$ and $p=3$}\\ \hline
 Levels & $L^1$    & Rate  & $L^2$    & Rate  & $L^{\infty}$ & Rate  & $L^1$    & Rate  & $L^2$    & Rate  & $L^{\infty}$ & Rate  \\ \hline
 0      & 1.78E-02 & -     & 2.68E-02 & -     & 1.36E-01     & -     & 1.85E-03 & -     & 3.84E-03 & -     & 4.86E-02    & -      \\ \hline
 1      & 2.93E-03 & -2.60 & 5.05E-03 & -2.41 & 5.98E-02     & -1.19 & 2.75E-04 & -2.75 & 5.85E-04 & -2.72 & 1.11E-02    & -2.14  \\ \hline
 2      & 3.86E-04 & -2.92 & 6.93E-04 & -2.87 & 1.09E-02     & -2.45 & 4.01E-05 & -2.78 & 8.74E-05 & -2.74 & 2.03E-03    & -2.45  \\ \hline
 3      & 5.55E-05 & -2.80 & 1.03E-04 & -2.74 & 2.23E-03     & -2.29 & 5.00E-06 & -3.00 & 1.01E-05 & -3.11 & 3.18E-04    & -2.67  \\ \hline
 4      & 8.96E-06 & -2.63 & 1.79E-05 & -2.53 & 4.96E-04     & -2.17 & 6.10E-07 & -3.04 & 1.23E-06 & -3.04 & 4.20E-05    & -2.92  \\ \hline
 5      & 1.46E-06 & -2.66 & 2.99E-06 & -2.58 & 8.96E-05     & -2.47 & 7.00E-08 & -3.12 & 1.51E-07 & -3.03 & 5.50E-06    & -2.93  \\ \hline
\end{tabular}
  \caption{Convergence study of the viscous shock propagation: two levels of $h$-refinement, $p=2$ and $p=3$; density error.}.
\label{tab:vs_p2p3}
\end{center}
\end{table}

\floatsetup[table]{font=footnotesize}
\begin{table}[htbp]
\vspace{0.5cm}
\begin{center}
\begin{tabular}{||l||c|c|c|c|c|c||c|c|c|c|c|c||}
\hline \hline
      & \multicolumn{6}{c||}{Conforming, $p=3$} & \multicolumn{6}{c||}{Nonconforming, $p=3$ and $p=4$}\\ \hline
 Levels & $L^1$    & Rate  & $L^2$    & Rate  & $L^{\infty}$ & Rate  & $L^1$    & Rate  & $L^2$    & Rate  & $L^{\infty}$ & Rate  \\ \hline
 0      & 4.45E-03 & -     & 7.52E-03 & -     & 7.51E-02     & -     & 2.41E-04 & -     & 5.24E-04 & -     & 1.12E-02     & -     \\ \hline
 1      & 3.40E-04 & -3.71 & 6.50E-04 & -3.53 & 1.19E-02     & -2.66 & 1.80E-05 & -3.74 & 4.11E-05 & -3.67 & 1.01E-03     & -3.47 \\ \hline
 2      & 2.67E-05 & -3.67 & 5.36E-05 & -3.60 & 1.20E-03     & -3.30 & 1.21E-06 & -3.90 & 3.00E-06 & -3.78 & 8.17E-05     & -3.63 \\ \hline
 3      & 1.95E-06 & -3.77 & 4.25E-06 & -3.66 & 1.25E-04     & -3.26 & 7.30E-08 & -4.05 & 1.90E-07 & -3.98 & 5.82E-06     & -3.81 \\ \hline
 4      & 1.48E-07 & -3.72 & 3.67E-07 & -3.53 & 1.12E-05     & -3.48 & 4.23E-09 & -4.11 & 1.09E-08 & -4.13 & 3.60E-07     & -4.02 \\ \hline
\end{tabular}
  \caption{Convergence study of the viscous shock propagation: two levels of $h$-refinement, $p=3$ and $p=4$; density error.}.
\label{tab:vs_p3p4}
\end{center}
\end{table}

\floatsetup[table]{font=footnotesize}
\begin{table}[htbp]
\vspace{0.5cm}
\begin{center}
\begin{tabular}{||l||c|c|c|c|c|c||c|c|c|c|c|c||}
\hline \hline
      & \multicolumn{6}{c||}{Conforming, $p=4$} & \multicolumn{6}{c||}{Nonconforming, $p=4$ and $p=5$}\\ \hline
 Levels & $L^1$    & Rate  & $L^2$    & Rate  & $L^{\infty}$ & Rate  & $L^1$     & Rate  & $L^2$    & Rate  & $L^{\infty}$ & Rate   \\ \hline
 0      & 1.21E-03 & -     & 2.28E-03 & -     & 2.50E-02     & -     & 3.47E-05  & -     & 8.15E-05 & -     & 1.90E-03     & -      \\ \hline
 1      & 8.50E-05 & -3.83 & 1.54E-04 & -3.88 & 3.04E-03     & -3.04 & 1.37E-06  & -4.66 & 3.13E-06 & -4.70 & 8.02E-05     & -4.57  \\ \hline
 2      & 2.75E-05 & -4.95 & 5.66E-06 & -4.77 & 1.52E-04     & -4.32 & 4.73E-08  & -4.85 & 1.10E-07 & -4.83 & 3.04E-06     & -4.72  \\ \hline
 3      & 1.16E-07 & -4.57 & 2.54E-07 & -4.48 & 7.54E-06     & -4.33 & 1.62E-09  & -4.87 & 3.62E-09 & -4.93 & 1.44E-07     & -4.40  \\ \hline
 4      & 5.21E-09 & -4.48 & 1.11E-08 & -4.52 & 4.01E-07     & -4.23 & 6.01E-11  & -4.75 & 1.30E-10 & -4.80 & 6.96E-09     & -4.37  \\ \hline
\end{tabular}
  \caption{Convergence study of the viscous shock propagation: two levels of $h$-refinement, $p=4$ and $p=5$; density error.}.
\label{tab:vs_p4p5}
\end{center}
\end{table}


\subsection{Taylor--Green vortex at $Re=1,600$}
The purpose of this section is to demonstrate that the nonconforming algorithm has the same stability properties as the 
conforming algorithm. To do so, the Taylor--Green vortex problem on a very coarse grid is solved.

The numerical solution is computed in a periodic cube $[-\pi L\leq x,y,z\leq \pi L]$ and the initial condition 
is given by
\begin{equation}\label{eq:TG}
\begin{split}
&\fnc{U}_{1} = \fnc{V}_{0}\sin\left(\frac{x_{1}}{L}\right)\cos\left(\frac{x_{2}}{L}\right)\cos\left(\frac{x_{3}}{L}\right),\\
&\fnc{U}_{2} = -\fnc{V}_{0}\cos\left(\frac{x_{1}}{L}\right)\sin\left(\frac{x_{2}}{L}\right)\cos\left(\frac{x_{3}}{L}\right),\\
&\fnc{U}_{3} = 0,\\
&\fnc{P} = \fnc{P}_{0}+\frac{\rho_{0}\fnc{V}_{0}^{2}}{16}\left[\cos\left(\frac{2x_{1}}{L}+\cos\left(\frac{2x_{2}}{L}\right)\right)\right]
\left[\cos\left(\frac{2x_{3}}{L}+2\right)\right].
\end{split}
\end{equation}
The flow is initialized to be isothermal, \ie, $\fnc{P}/\rho=\fnc{P}_{0}/\rho_{0}=R\fnc{T}_{0}$, and $\fnc{P}_{0}=1$, $\fnc{T}_{0}=1$, $L=1$, and $\fnc{V}_{0} = 1$. 
Finally, the Reynolds number is defined by $Re=(\rho_{0}\fnc{V}_{0})/\mu$, where $\mu$ is the dynamic viscosity.  

In order to obtain results that are reasonably close to those found for the incompressible 
equations, a Mach number of $M = 0.05$ is used. Furthermore, the Reynolds number, the Prandtl number, and the initial density distribution are set to $Re=1,600$, $Pr=0.71$, and $\rho_{0}=\gamma M^{2}$, respectively, where $\gamma=1.4$. 

For this test case, a grid is constructed as follow:
\begin{itemize}
\item Divide the computational domain with $N_{1h}$ hexahedral elements in each coordinate direction.
\item Refine random elements by using randomly one or two levels of $h$-refinement.
\item Assign the solution polynomial degree in each element to a random integer chosen uniformly from a set (see the legend in Figure \ref{fig:dkedt_tg}).
\item Construct the perturbed elements and their corresponding LGL points as described previously (the element interfaces are approximated using a polynomial degree which is the minimum solution polynomial degree used in the simulation).
\end{itemize}
Herein, two grids with $N_{1h}=4$ and $N_{1h}=8$ are considered. Their total number of hexahedrons is 869 and 7547, respectively.
The simulations are run without additional stabilization mechanisms (dissipation model, de-aliasing, filtering, \etc), where the only numerical dissipation originates from the upwind inter-element coupling procedure.

Figure \ref{fig:dkedt_tg} shows the time rate of change of the kinetic energy, $dke/dt$, for
the nonconforming algorithm using a random distribution of solution polynomial
order between i) $p=2$ and $p=8$ and ii) $p=7$ and $p=13$. The reference
DNS solution reported in \cite{de_wiart_tgv} is also plotted. We note that by increasing the 
order of accuracy of the solution polynomial in each cell and the grid density the
solution get closer to the DNS solution. The main 
take-away from the figure is that all simulations are stable, 
which is numerical evidence that the $h/p$ nonconforming 
scheme inherits the stability characteristics of the conforming 
and fully-staggered algorithms \cite{Carpenter2014,Parsani2015,Carpenter2016b,Carpenter2016,Parsani2016}.

\begin{figure}
   \centering
   \includegraphics[width=0.95\textwidth]{./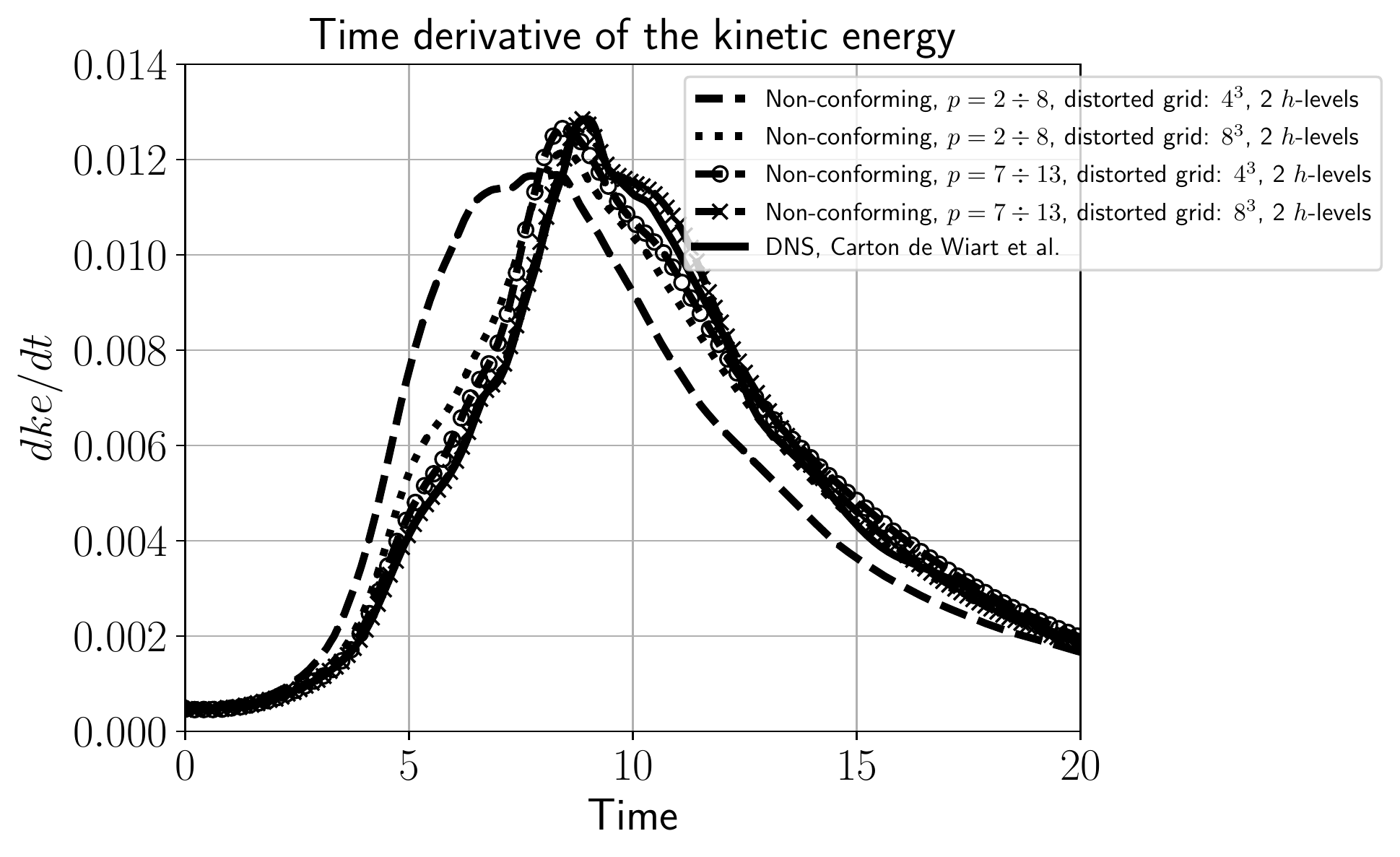}
   \caption{Evolution of the time derivative of the kinetic energy 
   for the Taylor--Green vortex at $Re = 1,600$, $M = 0.05$.}
   \label{fig:dkedt_tg}
\end{figure}


\subsection{Flow around a sphere at $Re=2,000$}

In this section, we test our implementation within a more complex setting represented by the flow around a sphere at $Re=2,000$ and $M=0.05$. With this value of the Reynolds number the flow is fully turbulent.
In this case, a sphere of diameter $d$ is centered at the origin of the axes, and a box is respectively extended $20d$ and $60d$ upstream
and downstream the direction of the flow; the box size is $30d$ in both the $x_2$ and $x_3$ directions. As boundary conditions, 
we consider adiabatic solid walls at the surface of the sphere \cite{dalcin_2019_wall_bc} and far field on all faces of the box.
We use a grid with 24,704 hexahedral elements.
Figure \ref{fig:mesh-sphere} shows the mesh near the sphere. The colors indicates the solution polynomial order used in each cell.
The quality of the elements is good in the boundary layer region whereas in the other portion of the domain is fairly poor. This
choice is intentional and is for the purpose of demonstrating the performance of the algorithm on non-ideal grids.

\begin{figure}
   \centering
   \includegraphics[width=0.65\textwidth]{./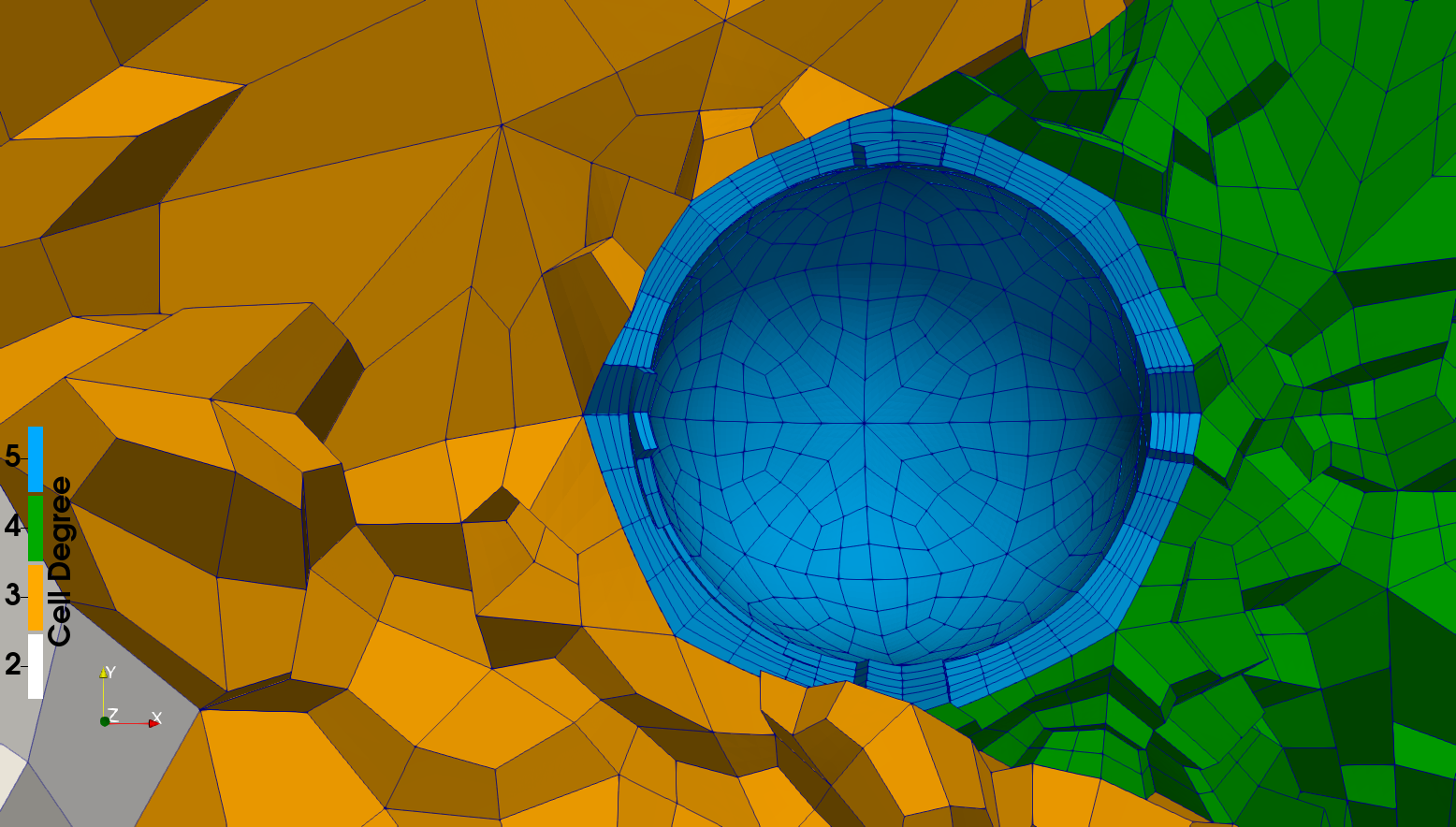}
   \caption{Polynomial degree distribution for the mesh around a sphere.}
   \label{fig:mesh-sphere}
\end{figure}

We compute the time-average value of the drag coefficient, $\langle C_D \rangle$, 
and we compare it with the value reported in in literature \cite{Munson_1990}.
From Table \ref{tab:sphere_cd}, it can be seen that the computed time-average 
drag coefficient matches very well the value reported in literature. 

\begin{table}[]
\begin{tabular}{||c|c||}
\hline \hline
                                 & $\langle C_D \rangle$ \\ \hline 
Munson et al. \cite{Munson_1990} & 0.412 \\ \hline
Present                          & 0.416 \\ \hline
\end{tabular}
\caption{Time-average drag coefficient of a sphere at $Re = 2,000$, $M = 0.05$.}.
\label{tab:sphere_cd}
\end{table}


\subsection{Entropy conservation of the fully-discrete explicit discretization}
To conclude the numerical results section, we demonstrate the entropy conservation 
of the fully-discrete explicit discretization of the compressible Navier--Stokes equations by integrating 
in time the system of ODEs which arise from the spatial discretization with an explicit relaxation 
Runge--Kutta scheme \cite{ranocha2019relaxation}. As shown in \cite{ketcheson2019relaxation,ranocha2019relaxation}, 
the term ``relaxation'' represents a general approach which
allows any Runge--Kutta method to preserve the correct time evolution of an
arbitrary functional, without sacrificing the linear covariance, accuracy, or
stability properties of the original method. In the context of the compressible
Euler and Navier--Stokes equations, the relaxation Runge--Kutta 
scheme is constructed to preserve the discrete entropy function obtained
from the spatial discretization. This leads to a fully discrete algorithm which is 
entropy conservative or entropy stable if the spatial discretization
is entropy conservative or entropy stable, respectively.

As a model problem, we again use the propagation of an isentropic vortex 
and we analyze the time evolution of the entropy function, which for the current
setting must be zero. The same grid and solution polynomial distribution shown in Figure \ref{fig:iv_mesh_cut} 
is used for this test case.
To achieve entropy conservation at the spatial level, all the dissipation terms 
used for the interface coupling are turned off, including upwind and interior penalty SATs.
\begin{figure}
   \centering
   \includegraphics[width=0.7\textwidth]{./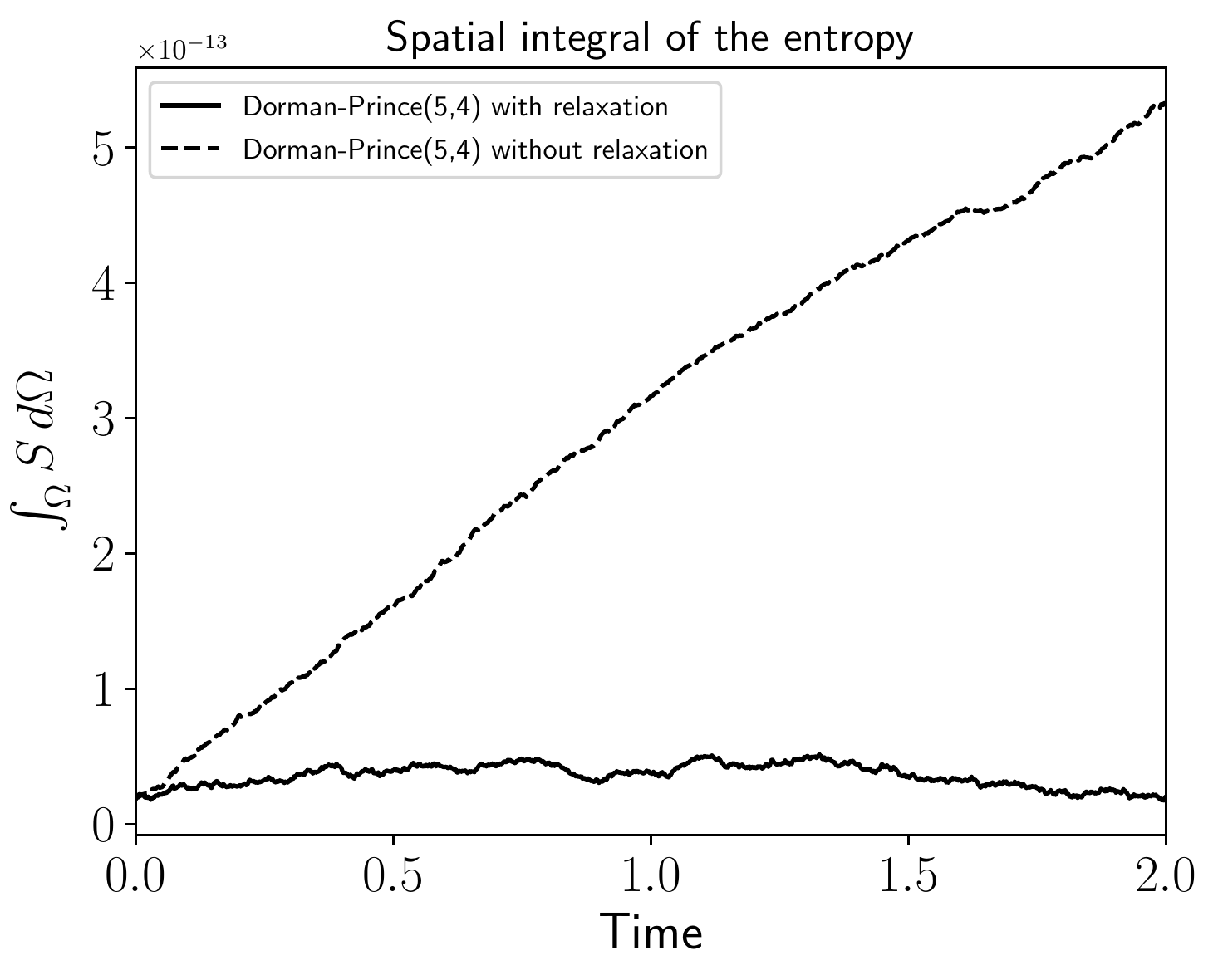}
   \caption{Evolution of the discrete spatial integral of the entropy function.}
   \label{fig:iv_hp_ref_rrk}
\end{figure}
The fourth-order accurate Dormand--Prince method \cite{dormand_rk_1980} with and 
without relaxation algorithm are used. We show the entropy variation in
Figure \ref{fig:iv_hp_ref_rrk}. The entropy is conserved up to machine (double)
precision using relaxation, whereas, without relaxation, the solution shows
significant essentially monotone changes in total entropy function. 

%
%
\section{Conclusions}\label{sec:conclusions}
In this paper, the $p$-refinement/coarsening algorithms in~\cite{Fernandez2019_p_ns,Fernandez2019_p_euler,Fernandez2018_TM} 
are extended to arbitrary $h/p$-refinement/coarsening. In order to obtain an algorithm for which 
the discrete GCL conditions are solved for element by element, the surface metric terms 
need to be localized to the small elements on an $h$-refined face. 
The discrete GCL conditions are then solved using the procedure in Crean~\etal~\cite{Crean2018}. 
The resulting algorithm is entropy conservative/stable, element-wise conservative, and free-stream 
preserving. 
Finally, the algorithm is shown to retain the accuracy and stability characteristics of the original 
conforming scheme on a set of test problems and, when coupled with relaxation Runge--Kutta schemes
\cite{ranocha2019relaxation}, yields a fully discrete entropy conservative/stable scheme. 

\begin{acknowledgements}
The research reported in this publication was
supported by funding from King Abdullah University of Science and Technology (KAUST).
We are thankful for the computing resources of the Supercomputing Laboratory and the Extreme 
Computing Research Center at KAUST.
Special thanks are extended to Dr. Mujeeb R. Malik for supporting this work as part of
NASA's ``Transformational Tools and Technologies'' ($T^3$) project.
\end{acknowledgements}
\bibliographystyle{spmpsci}
\bibliography{Bib}
\end{document}